\newcommand\tX{\widetilde X}
\newcommand\X[1]{X^{(#1)}}
\newcommand\ra[1]{r_a^{(#1)}}
\newcommand\rb[1]{r_b^{(#1)}}
\newcommand\XOOX{{\left(\!\begin{array}{c}\begin{array}{c|c}X\ \ & 0 \end{array} \\ \hline \begin{array}{c|c}0 &\ \ X\end{array}\end{array}\!\right)}}
\definecolor{thingray}{rgb}{0.925,0.925,0.925}
\definecolor{gray}{rgb}{0.825,0.825,0.825}
\newcommand\m[1]{\cellcolor{gray}{#1}}
\newcommand\z[1]{\cellcolor{thingray}{#1}}
\newcommand\p[1]{{#1}}
\newtheorem{theorem}{Theorem}[section]
\newtheorem{lemma}[theorem]{Lemma}
\newtheorem{proposition}[theorem]{Proposition}
\newtheorem{corollary}[theorem]{Corollary}
\theoremstyle{remark}
\newtheorem{example}{Example}[section]
\newtheorem{remark}{Remark}[section]
\theoremstyle{definition}
\newtheorem{definition}[theorem]{Definition}
\DeclareMathOperator{\tr}{tr}
\DeclareMathOperator{\diag}{diag}
\DeclareMathOperator{\rk}{rank}
\numberwithin{equation}{section}
\title[Existence of Kronecker Covariance MLE]{Existence and Uniqueness of the Kronecker\\ Covariance MLE}
\author{Mathias Drton}
\address{Department of Mathematics, Technical University of Munich, Boltzmannstra{\ss}e 3,
85748 Garching b. M\"unchen, Germany}
\author{Satoshi Kuriki}
\address{The Institute of Statistical Mathematics, 10-3 Midoricho, Tachikawa, Tokyo 190-8562, Japan}
\author{Peter Hoff} 
\address{Department of Statistical Science, Duke University, Durham, NC 27708-0251, United States}
\begin{document}

\begin{abstract}
In matrix-valued datasets the sampled matrices often exhibit correlations among both their rows and their columns. A useful and parsimonious model of such dependence is the matrix normal model, in which the covariances among the elements of a random matrix are parameterized in terms of the Kronecker product of two covariance matrices, one representing row covariances and one representing column covariance.  An appealing feature of such a matrix normal model is that the Kronecker covariance structure allows for standard likelihood inference even when only a very small number of data matrices is available. For instance, in some cases a likelihood ratio test of dependence may be performed with a sample size of one.  However, more generally the sample size required to ensure boundedness of the matrix normal likelihood or the existence of a unique maximizer depends in a complicated way on the matrix dimensions.  This motivates the study of how large a sample size is needed to ensure that maximum likelihood estimators exist, and exist uniquely with probability one.  Our main result gives precise sample size thresholds in the paradigm where the number of rows and the number of columns of the data matrices differ by at most a factor of two.  Our proof uses invariance properties that allow us to consider data matrices in canonical form, as obtained from the Kronecker canonical form for matrix pencils.
\end{abstract}




    \keywords{Gaussian distribution, Kronecker canonical form, matrix
      normal model, maximum likelihood estimation, separable
      covariance} 


\maketitle



\section{Introduction}
\label{sec:introduction}

\subsection{Kronecker covariances and matrix normal models}
\label{subsec:intro:kronecker}

A matrix-valued dataset consists of a sample of matrices 
$Y_1,\ldots, Y_n$, each taking values in $\mathbb R^{m_1\times m_2}$
for integers $m_1, m_2\ge 2$. 
Such data arise in spatial statistics \cite{koch:2020} as well as in a variety of experimental settings where 
outcomes are obtained under combinations of two conditions, such as 
inter\-national trade between pairs of countries \cite{volfovsky:2015},
studies involving 
multivariate time-series of EEG measurements on multiple individuals 
\cite{makeig_2012}, 
age by period human mortality data \cite{fosdick_hoff_2014}, and 
factorial experiments arising in genomics \cite{allen_tibshirani_2012}, to name a few.  
In these applications the matrices $Y_1,\dots,Y_n$ often exhibit substantial covariance among their rows and covariance among their columns, 
in the sense that (after appropriately subtracting out any mean or regression effects) 
the two empirical covariance matrices 
$\tfrac{1}{n}\sum_{i=1}^n Y_i Y_i^T$ and $\tfrac{1}{n}\sum_{i=1}^n Y_i^T Y_i$ are substantially non-diagonal. 
This motivates the use of a statistical model that can represent 
such data features. 

One suitable and widely-used model is based on  
matrix normal distributions; see \cite{dawid_1981,ohlson:2013,greenewald,glanz:2018} or also the work on graphical modeling in \cite{yin:2012,zhu:2018,MR3889377,MR3210978,MR2758420}.  Let $Z$ be an $m_1\times m_2$ random matrix with i.i.d.\ standard normal entries.  A random matrix $Y$ taking values in $\mathbb R^{m_1\times m_2}$ is said to have 
a matrix normal distribution if there exist (deterministic) matrices $M\in \mathbb R^{m_1\times m_2}$, $A\in \mathbb R^{m_1\times m_1}$, and $B\in \mathbb R^{m_2\times m_2}$ such that $Y\stackrel{d}{=} M + A Z B^T$.  
In this case, we write $Y \sim \mathcal N(M, \Sigma_2\otimes \Sigma_1)$, 
where $\Sigma_1 = A A^T$ and $\Sigma_2 =B B^T$ and ``$\otimes$'' denotes the Kronecker product.  
This notation reflects that $\Sigma_2\otimes \Sigma_1$ is the covariance matrix of the vectorization of $Y$.  If $A$ and $B$, and thus also $\Sigma_1$ and $\Sigma_2$, are invertible then the matrix normal distribution $\mathcal N( M , \Sigma_2\otimes \Sigma_1)$ is regular and 
has a Lebesgue density.

For fixed $m_1$ and $m_2$, the matrix normal model is the set of all regular matrix normal distributions. 
The number of its covariance parameters is on the order of $m_1^2 +m_2^2$, 
which is a substantial reduction as compared to the normal model with unrestricted covariance and on the order of $m_1^2 m_2^2$ parameters. 
As a result, matrix normal models can be used to make 
likelihood-based inference with sample sizes that would preclude use of an unrestricted covariance model. 
For example, in \cite{volfovsky:2015}, matrix normal
models were used to construct non-trivial tests of dependence for square data matrices even when 
the sample size was one.

\subsection{Practical relevance of sample size conditions}
Numerical experiments indicate that the sample size required for a bounded likelihood 
or unique maximum likelihood estimator (MLE) depends in subtle ways on the dimensions $m_1$ and $m_2$ of the data matrices; see Example~\ref{ex:flipflop} below. 
Despite recent progress on 
sufficient sample size conditions, 
the
precise behavior of the matrix normal likelihood function in settings with small
sample size $n$ is not fully understood \cite{soloveychik:2016}.
Specifically, from prior literature it is not known when precisely the \emph{Kronecker MLE}, i.e.,
MLE of the covariance matrix $\Sigma_2\otimes\Sigma_1$,
exists or exists uniquely.  
This is the problem we consider in this article, which gives new sample size conditions that provide a full solution to the problem for matrices whose dimensions $m_1$ and $m_2$ differ by a factor of at most 2; this is the regime where prior results leave the largest gaps.
Precise sample size conditions are of great practical relevance 
as they are useful for the development of
numerical methods to obtain MLEs, the design of experiments, and as a guide to alternative data analysis strategies
when conditions for existence or uniqueness are not met.

Regarding numerical methods, matrix normal models generally do not admit a closed form MLE.
Hence, a data analyst relies on iterative methods of computation, for which suitable convergence criteria need to be set \cite{dutilleul:1999}.
However, as we will describe below, iteration of such an algorithm will either
 converge to a unique maximizer, converge to a non-unique maximizer, or  not converge. Absent knowledge of the sample size conditions for existence of the Kronecker MLE, at each iteration
the algorithm would have
to distinguish in an ad-hoc manner between a situation in which the procedure has not yet sufficiently converged but will converge eventually,
and one in which it will never converge. Additionally, in cases where the MLE exists, the analyst would certainly want to know if it exists uniquely, as non-uniqueness implies multiple explanations of the data generating mechanism that are equally valid (based on likelihood). Absent knowledge of the sample size conditions for uniqueness, the data analyst might not be aware that an MLE obtained via the iterative procedure was not unique. Even if the procedure was run from several different starting values, an assessment of non-uniqueness could be imprecise, as differences from different runs could be due to non-uniqueness, or just an indication that the algorithm
has not been sufficiently iterated. In contrast, precise
sample size conditions allow one to side-step these numerical issues.

Knowledge of sample size conditions can also assist with study design. For example, consider an experiment in which each replication produces a matrix of gene expression levels for a set of tissue types.
If interest is inferring dependencies among genes and among types
using the matrix normal model
(see, e.g., \cite{efron_2009}),
then knowledge of the sample size requirement for existence
and uniqueness of the Kronecker MLE would certainly be useful. Additionally, in cases where
the study has been completed and the sample size conditions have not been met, our results still provide a guide to alternative methods for making inference. For example, suppose we wish to test
 a null hypothesis $H: \Sigma_2= I$ of exchangeability within each row. If the sample size is insufficient for existence of the MLE, then the likelihood ratio statistic is infinity with probability one and a non-trivial
level-$\alpha$ likelihood ratio test is unavailable. However, suppose
 the  maximum number $m_2'$ of columns
for which the maximized
likelihood is bounded, and thus for which a
likelihood ratio test with non-trivial power may be obtained, were known.
In this case, the null hypothesis $H$  could be evaluated by properly combining the results of multiple tests of exchangeability on subsets of $m_2'$ columns, even at sample sizes as small as $n=1,2$.

\subsection{Maximum likelihood thresholds and known results}
\label{subsec:intro:thresholds}

For notational simplicity, we obtain results for samples of size $n$ from a 
 mean-zero matrix normal model, that is, 
\begin{equation}
\label{eq:kronecker-structure}
Y_1,\ldots, Y_n \sim \mbox{i.i.d.} \ \mathcal N(0,\Sigma_2\otimes \Sigma_1). 
\end{equation}
As noted in Remark~\ref{rem:mean-unknown}, 
sample-size thresholds for the case of an unknown mean will be equal to those in this 
mean-zero case, plus one additional data matrix. 
Let $\mathit{PD}(m)$ be the cone of positive definite $m\times m$
matrices.  Let $\Psi_1=\Sigma_1^{-1}$ and $\Psi_2=\Sigma_2^{-1}$ be
the precision matrices.  Ignoring an additive constant, two times the
log-likelihood function for the matrix normal model can be written as
\begin{equation}
\ell(\Psi_1,\Psi_2) = n m_2\log\det(\Psi_1) + n m_1\log\det(\Psi_2) -
\tr\bigg(\Psi_1 \sum_{i=1}^n
Y_i\Psi_2Y_i^T\bigg).
  \label{eq:log-likelihood}
\end{equation}
The log-likelihood function only depends on the Kronecker product
$\Psi_2\otimes \Psi_1$ and 
\begin{equation}
  \label{eq:scaling}
  \ell\left(c\Psi_1,c^{-1}\Psi_2\right) \;=\;\ell(\Psi_1,\Psi_2)
\end{equation}
for all scalars $c>0$.

A standard method to compute the MLE of $\Psi_2\otimes \Psi_1$ is
block-coordinate descent, also referred to as the ``flip-flop
algorithm'' \cite{dutilleul:1999}.  
If $\Psi_2$ is fixed to be a value $\tilde \Psi_2$, the likelihood function is maximized by $\tilde \Psi_1 = \tilde \Sigma_1^{-1}$ with $\tilde\Sigma_1 = \tfrac{1}{nm_2}\sum_i Y_i 
\tilde \Psi_2 Y_i^T$.  Similarly, since the trace term in \eqref{eq:log-likelihood} can be alternatively written 
as $\tr\bigl( \Psi_2 \sum_i Y_i^T \Psi_1 Y_i\bigr)$, the maximizer when fixing $\Psi_1=\tilde \Psi_1$ is $\tilde \Psi_2=\tilde\Sigma_2^{-1}$ with $\tilde \Sigma_2 = \tfrac{1}{nm_1}\sum_i Y_i^T \tilde \Psi_1 Y_i$. 
The flip-flop algorithm proceeds by iteratively updating $\tilde \Psi_1$ and 
$\tilde \Psi_2$ (or equivalently $\tilde \Sigma_1$ and $\tilde \Sigma_2$)
using these formulas. Evidently, the updates are well-defined 
as long as both 
$\sum_i Y_i 
\tilde \Psi_2 Y_i^T$  and 
$\sum_i Y_i^T \tilde \Psi_1 Y_i$ are invertible. As will be made precise 
later in Lemma~\ref{lem:profile}, this condition will be 
met a.s.~as long as the ``row sample size'' $n m_2$ is as big as the number of 
rows $m_1$, and similarly $n m_1$ is as big as $m_2$.

\begin{example}
  \label{ex:flipflop}
  Consider the case of $n=2$ data matrices of size $(m_1,m_2)$, where we fix $m_2=4$ and consider $m_1=5,6,7,8$.  In each case we take the data to be in a canonical form (as specified in Theorem~\ref{thm:tenberge}) and run the flip-flop algorithm starting from a random choice for $\Psi_1$.  Figure~\ref{fig:flipflop} depicts the
  behavior of the algorithm in terms of the function $g$ defined as
  $-2/n$ times the log-likelihood (solid line), where we also omitted additive constants from the log-likelihood; 
  see~\eqref{eq:profile-neglik}.  In addition, the figure shows the
  difference in $g$ from one iteration to the next 
  (dashed line).
  The top left figure, which is for $(m_1,m_2)=(5,4)$, shows the function
  $g=-(2/n)\ell$ converging to its minimum.  The top right figure, for
  $(m_1,m_2)=(6,4)$, shows the function $g$ converging similarly.  In the bottom left figure, for
  $(m_1,m_2)=(7,4)$, the function $g$ diverges to $-\infty$; the dashed line stays at a nonzero negative value.  Finally, in the bottom right figure, for $(m_1,m_2)=(8,4)$, the algorithm converges in one step.  These
  four settings correspond to the cases where the MLE exists uniquely
  (top left), MLEs exist non-uniquely (top and bottom right), and an MLE does not
  exist (bottom left).  We emphasize that while our figure pertains to one particular initialization the observed behavior is similar for other random starting values.
\end{example}

\begin{figure}[t]
\begin{center}
\begin{tabular}{c@{\hspace{1.5cm}}c}
(a) \ \includegraphics[scale=.4]{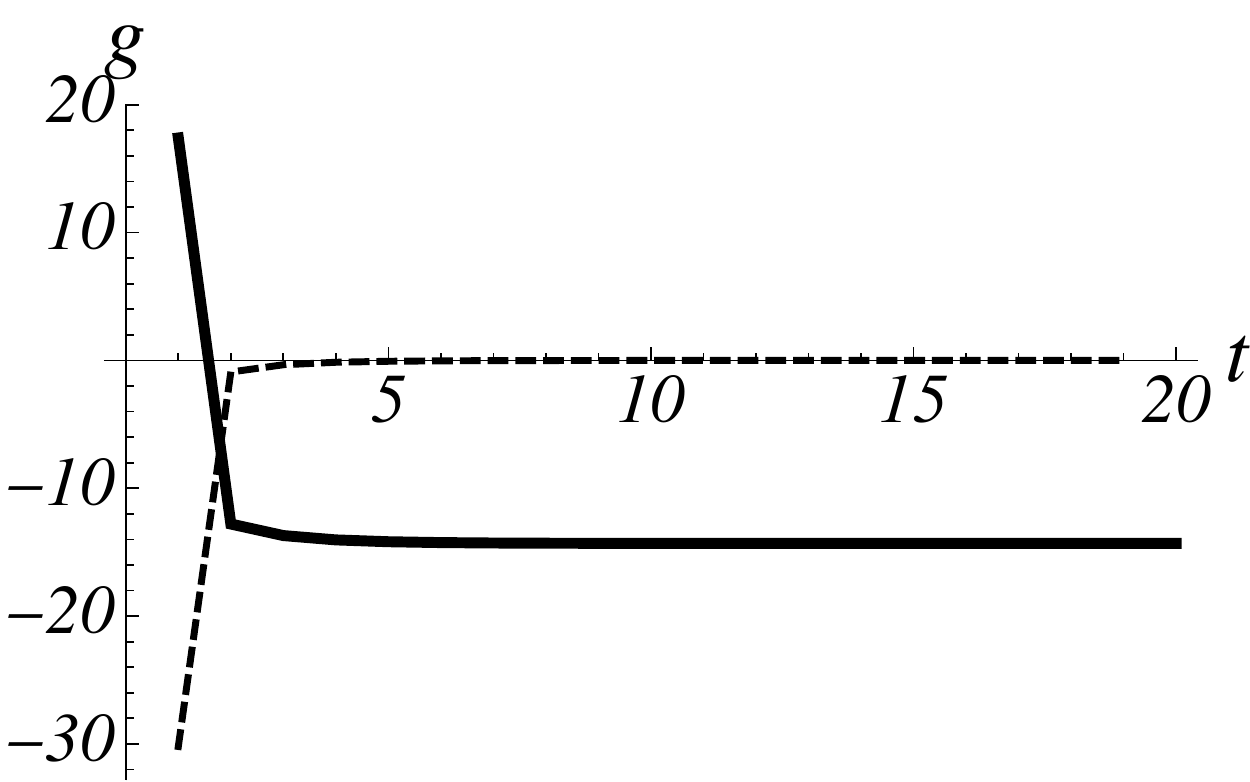} &
(b) \ \includegraphics[scale=.4]{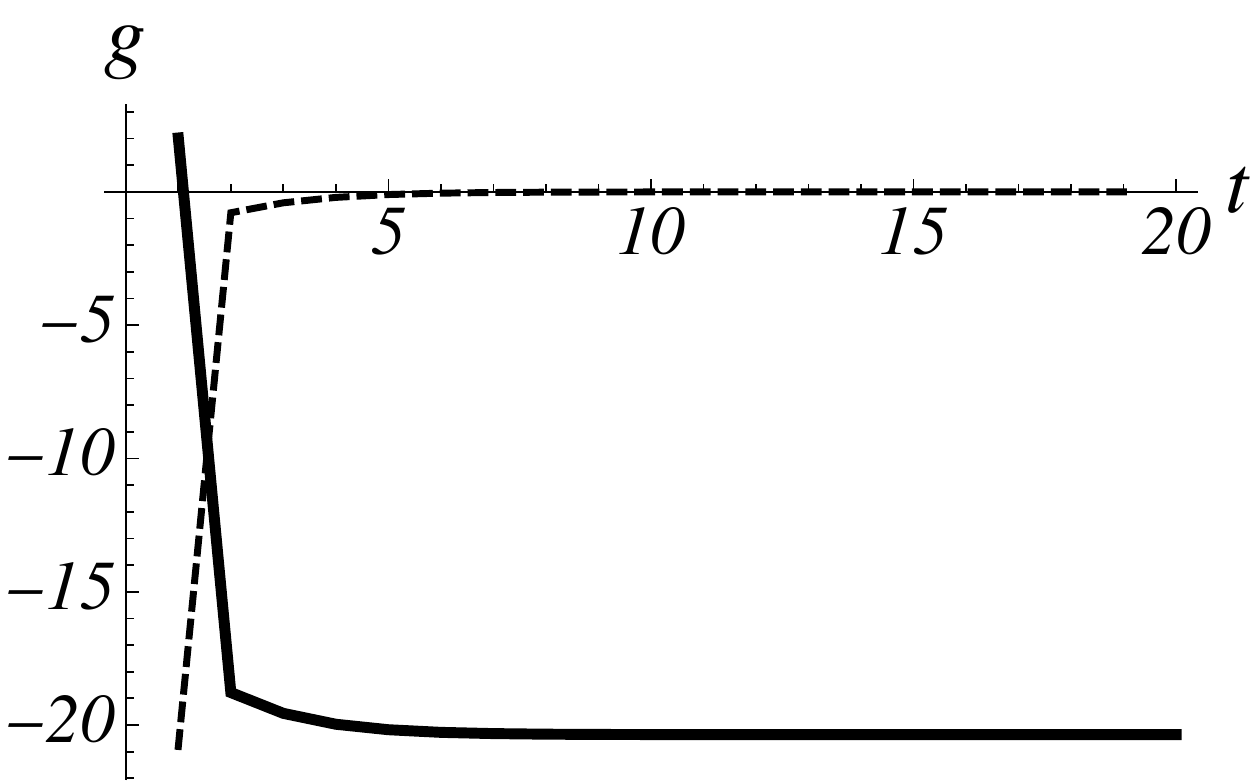}\\[0.1cm]
(c) \ \includegraphics[scale=.4]{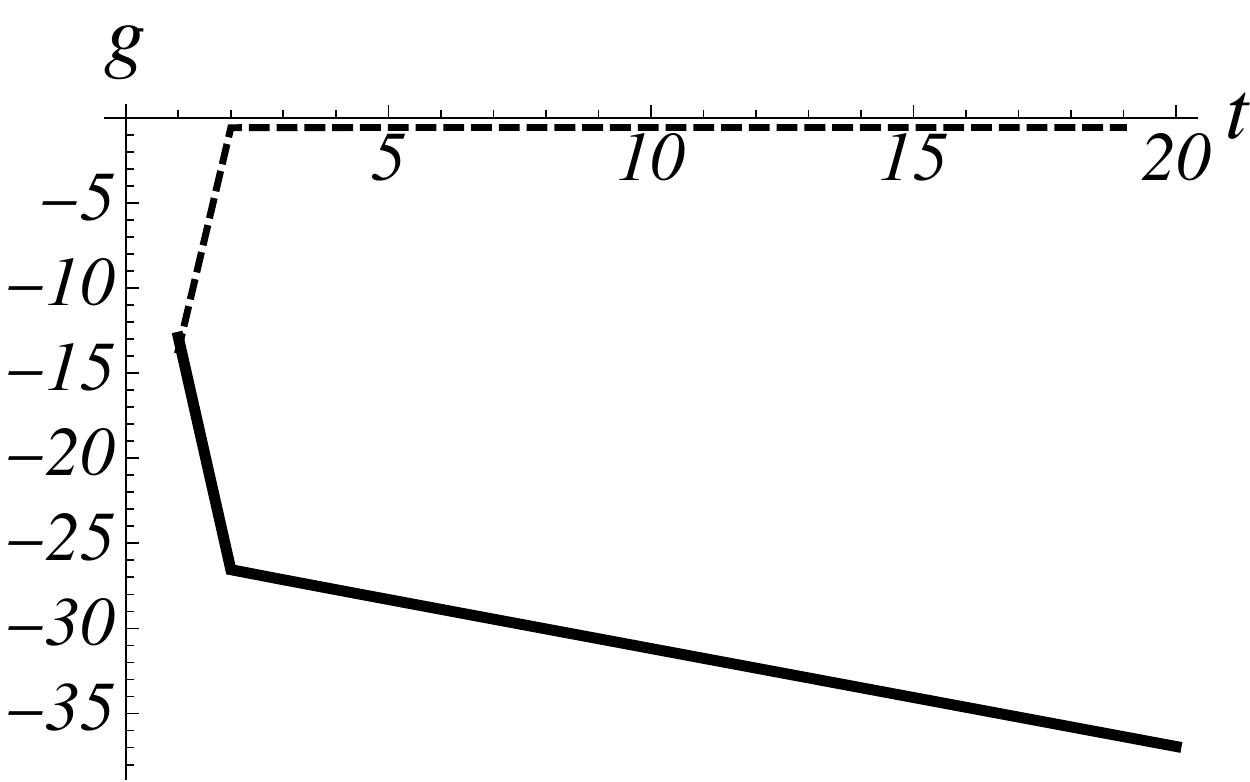} &
(d) \ \includegraphics[scale=.4]{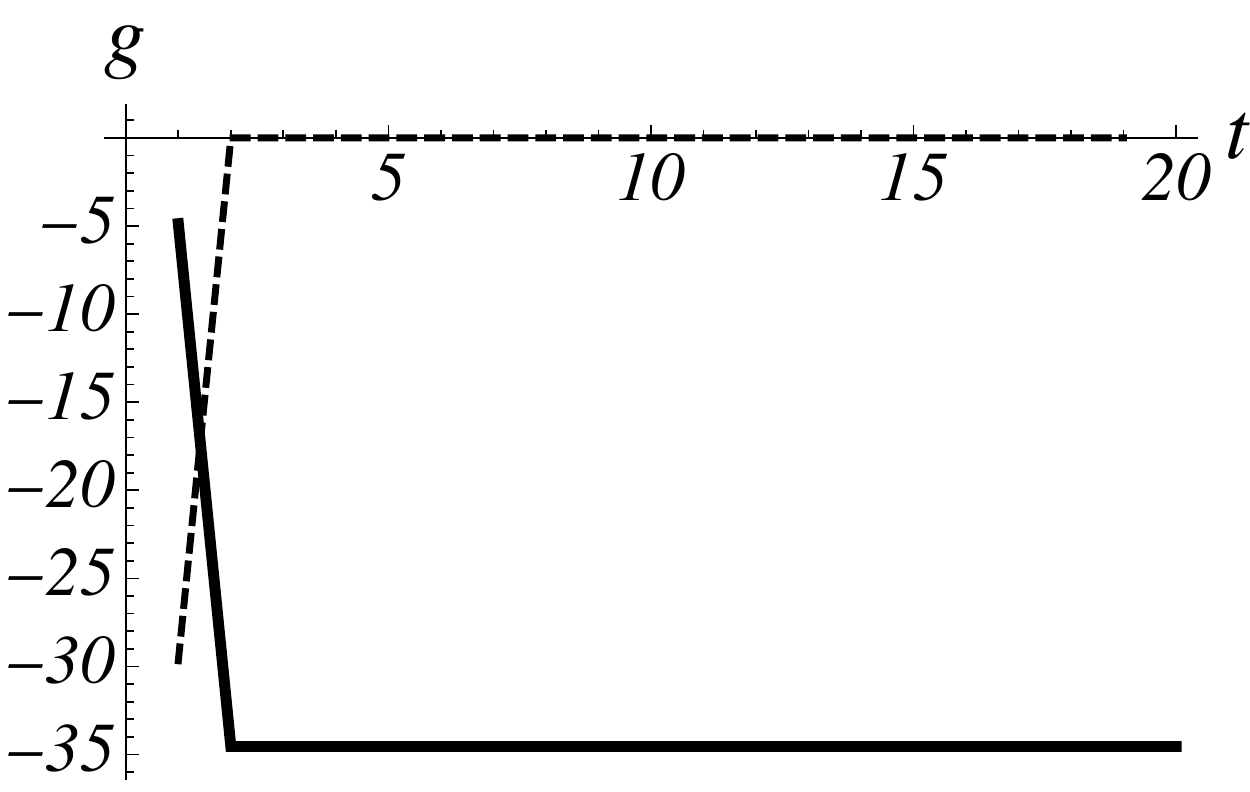}
\end{tabular}
\end{center}
\caption{Plots of profile log-likelihood function $g$  (solid line) and its one-step differences
  (dashed line) against the number of iterations $t$ of the flip-flop algorithm:
  (a) $(m_1,m_2)=(5,4)$ with unique MLE, (b) $(m_1,m_2)=(6,4)$ with the MLE existing non-uniquely, (c) $(m_1,m_2)=(7,4)$ with the MLE non-existing, (d) $(m_1,m_2)=(8,4)$ with the MLE existing non-uniquely.
  }
\label{fig:flipflop}
\end{figure}

As explained in Section~\ref{sec:rectangular-matrices}, the behavior
of the log-likelihood function with respect to existence and
uniqueness of the Kronecker MLE is essentially independent of the
realizations $Y_i$, and merely depends on the triple $(m_1,m_2,n)$.
It is well known that for large enough $n$ the Kronecker MLE exists
uniquely a.s.; this is simply a consequence of the properties
of the usual (vector) normal model.  However, special and at times
somewhat paradoxical properties of the matrix normal model emerge for
small sample size $n$.  Indeed, as we noted in
Example~\ref{ex:flipflop}, for fixed $m_2$ minor differences of $m_1$
may cause substantially different behavior of the log-likelihood
function when the sample size $n$ is small.  To capture this behavior,
we define in this paper three types of sample size thresholds,
which are critical sample sizes at which the a.s.~behavior of the
likelihood function changes.  Our terminology follows \cite{gross:2018,drton:2019}.

\begin{definition}
  \label{def:thresholds}
  (i) The Kronecker MLE \emph{exists} if the
  function $\ell$ achieves its maximum over the domain of definition
  $\mathit{PD}(m_1)\times \mathit{PD}(m_2)$.  It \emph{exists
    uniquely} if in addition all local maxima of $\ell$ have the same
  Kronecker product.

  (ii) We define three positive integer thresholds for the sample
  size.  The \emph{existence threshold} $N_e(m_1,m_2)$ is the integer
  such that the Kronecker MLE exists a.s.\ if and only if
  $n\ge N_e(m_1,m_2)$.  The \emph{uniqueness threshold} $N_u(m_1,m_2)$
  is the integer such that the Kronecker MLE exist uniquely a.s.\ if and only if
  $n\ge N_u(m_1,m_2)$.  Finally, the \emph{boundedness threshold}
  $N_b(m_1,m_2)$ is the integer such that $\ell$ is bounded a.s.\ if and only if
  $n\ge N_b(m_1,m_2)$.
\end{definition}

\begin{remark}
\label{rem:mean-unknown}
  Throughout the paper we assume the expectation of $Y_i$ to be zero and known.
  Standard results 
  yield that
  $N_b(m_1,m_2)+1$, $N_e(m_1,m_2)+1$, and $N_u(m_1,m_2)+1$ are the
  relevant thresholds for the case where the mean matrix in
  $\mathbb{R}^{m_1\times m_2}$ is unknown and also estimated by
  maximum likelihood \cite[Section 3.3]{anderson:2003}.
\end{remark}

The three thresholds from Definition~\ref{def:thresholds} are finite
and no larger than $m_1m_2$.  Specifically, the well-known results on
estimation of an unconstrained Gaussian covariance matrix collected in
\cite[Section 3.2]{anderson:2003} together with Lemma~\ref{lem:unique}
below yield that $n\ge m_1m_2$ is sufficient for a.s.~unique existence
of the Kronecker MLE.  However, this condition is far from necessary.
Indeed, the main theorem of \cite{soloveychik:2016} states that unique
existence holds a.s.~under the much weaker requirement that
\[
  n \;>\; \frac{m_1}{m_2}+\frac{m_2}{m_1}.
\]
To our knowledge, this is the best known sufficient condition for a.s.~(unique) existence.

The condition
\begin{equation}
\label{eq:necessary-cond}
  n\ge \max\left\{\frac{m_1}{m_2},\frac{m_2}{m_1}\right\}
\end{equation}
is necessary for existence; compare also Theorem 1(1) in \cite{soloveychik:2016}.  This is a consequence of the following simple lemma, for which we include a proof in
the appendix.  

\begin{lemma}
  \label{lem:Y-full-row-rank}
  Suppose $m_1\ge m_2$.  If
  $Y=(Y_1,\dots,Y_n)\in\mathbb{R}^{m_1\times n m_2}$ has row rank smaller than
  $m_1$, then $\ell(\Psi_1,\Psi_2)$ is not bounded above on
  $\mathit{PD}(m_1)\times \mathit{PD}(m_2)$.
\end{lemma}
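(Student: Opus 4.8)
The plan is to exploit the scaling freedom in the likelihood together with a direction in which the quadratic (trace) term stays bounded while the log-determinant term grows. Since $Y=(Y_1,\dots,Y_n)$ has row rank smaller than $m_1$, there is a nonzero vector $v\in\mathbb R^{m_1}$ in the left kernel of $Y$, i.e.\ $v^T Y_i=0$ for all $i=1,\dots,n$. First I would complete $v$ to a basis and, after an orthogonal change of coordinates on $\mathbb R^{m_1}$ (which, as recalled in Section~\ref{sec:rectangular-matrices}, leaves the problem invariant), assume $v=e_{m_1}$, so that the last row of every $Y_i$ vanishes. This means $e_{m_1}^T Y_i\Psi_2 Y_i^T e_{m_1}=0$ regardless of $\Psi_2$.

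Next I would choose a one-parameter family $\Psi_1^{(t)}$ that blows up only in the direction $v$ while keeping $\Psi_2$ fixed, say $\Psi_2=I_{m_2}$. Concretely, take $\Psi_1^{(t)}=I_{m_1}+t\,vv^T$ for $t>0$ (in the chosen coordinates this is $\diag(1,\dots,1,1+t)$). Then $\det\Psi_1^{(t)}=1+t\to\infty$, so the term $n m_2\log\det\Psi_1^{(t)}\to+\infty$. The determinant term $n m_1\log\det\Psi_2$ is constant. For the trace term, write
\[
\tr\!\left(\Psi_1^{(t)}\sum_{i=1}^n Y_i\Psi_2 Y_i^T\right)
=\tr\!\left(\sum_{i=1}^n Y_iY_i^T\right)+t\sum_{i=1}^n v^T Y_iY_i^T v,
\]
and the second sum is zero because $v^TY_i=0$ for every $i$. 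Hence the trace term is the constant $\tr\bigl(\sum_i Y_iY_i^T\bigr)$, independent of $t$. Therefore $\ell(\Psi_1^{(t)},I_{m_2})= n m_2\log(1+t)+\text{const}\to+\infty$ as $t\to\infty$, so $\ell$ is unbounded above on $\mathit{PD}(m_1)\times\mathit{PD}(m_2)$.

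There is essentially no hard step here; the only point requiring a word of care is the invariance reduction to $v=e_{m_1}$, which is justified because for an orthogonal $Q\in\mathbb R^{m_1\times m_1}$ the substitution $Y_i\mapsto QY_i$, $\Psi_1\mapsto Q\Psi_1 Q^T$ leaves $\ell$ unchanged and sends the left kernel of $Y$ to that of $QY$; alternatively one can avoid coordinates altogether and just work with the explicit $\Psi_1^{(t)}=I_{m_1}+t\,vv^T$ and $\Psi_2=I_{m_2}$, for which positive definiteness, the determinant formula $\det(I_{m_1}+t\,vv^T)=1+t\|v\|^2$, and the vanishing of $v^TY_iY_i^Tv$ are all immediate. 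Finally, $m_1\ge m_2$ is only needed to make ``row rank smaller than $m_1$'' the relevant deficiency (so that such a $v$ can exist for generic data precisely when $nm_2<m_1$); the argument above uses solely the existence of a left null vector $v$.
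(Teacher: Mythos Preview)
Your proof is correct and follows essentially the same idea as the paper's: fix $\Psi_2=I_{m_2}$ and push $\Psi_1$ to infinity along a rank-one direction $vv^T$ in the kernel of $\sum_i Y_iY_i^T$, so that the trace term stays constant while the log-determinant term diverges. The only cosmetic difference is that the paper obtains $v$ via the spectral decomposition $\sum_i Y_iY_i^T=Q^TDQ$ (taking the eigenvector for the zero eigenvalue), whereas you pick $v$ directly from the left kernel of $Y$; your coordinate-free variant with $\Psi_1^{(t)}=I_{m_1}+t\,vv^T$ is arguably the cleanest formulation.
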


Note from our earlier discussion that the condition from \eqref{eq:necessary-cond} is precisely the requirement needed for
the flip-flop algorithm to have well-defined iterative update steps.  Somewhat confusingly, some of the literature refers to (\ref{eq:necessary-cond})
as the necessary and sufficient condition for existence of the
Kronecker MLE; see, e.g., \cite{dutilleul:1999}.  However,
even when \eqref{eq:necessary-cond} holds, the likelihood
function need not achieve its maximum, or even be bounded (recall Example~\ref{ex:flipflop}).  

In terms of the thresholds we defined, 
the known results from the literature may be summarized as follows.
The floor and ceiling functions are denoted by $\lfloor\cdot\rfloor$ and $\lceil\cdot\rceil$, respectively.

\begin{proposition}
  \label{prop:known-results}
  The three ML thresholds satisfy
  \[
    \max\left\{\frac{m_1}{m_2},\frac{m_2}{m_1}\right\} \;\le\;
    N_b(m_1,m_2)\;\le\; N_e(m_1,m_2)\;\le\; N_u(m_1,m_2) \;\le\; \left\lfloor 
      \frac{m_1}{m_2}+\frac{m_2}{m_1} \right\rfloor + 1.
  \]
\end{proposition}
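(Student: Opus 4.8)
The plan is to read Proposition~\ref{prop:known-results} as an assembly of elementary facts together with two borrowed nontrivial inputs: the sufficient condition of \cite{soloveychik:2016} for the upper bound, and Lemma~\ref{lem:Y-full-row-rank} for the lower bound. I would work directly from Definition~\ref{def:thresholds}(ii), which characterizes $N_b,N_e,N_u$ as the integers with ``$\ell$ bounded a.s.\ $\iff n\ge N_b$'', ``MLE exists a.s.\ $\iff n\ge N_e$'', and ``MLE exists uniquely a.s.\ $\iff n\ge N_u$''; the well-definedness of these thresholds (equivalently, the monotonicity in $n$ of the three properties) is the content of Section~\ref{sec:rectangular-matrices} and I would simply invoke it.

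For the middle chain $N_b\le N_e\le N_u$, I would use the pointwise implications: if $\ell$ attains its maximum on $\mathit{PD}(m_1)\times\mathit{PD}(m_2)$ then $\ell$ is bounded above, so ``existence'' implies ``boundedness'', while ``unique existence'' implies ``existence'' by Definition~\ref{def:thresholds}(i). Hence at $n=N_e$ the MLE exists a.s., so $\ell$ is bounded a.s., which by the characterization of $N_b$ forces $N_e\ge N_b$; likewise at $n=N_u$ the MLE exists a.s., so $N_u\ge N_e$.

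For the upper bound, I would set $n^\star=\lfloor m_1/m_2+m_2/m_1\rfloor+1$. Since $\lfloor x\rfloor+1$ is the least integer strictly greater than any real $x$, we have $n^\star>m_1/m_2+m_2/m_1$, so the main theorem of \cite{soloveychik:2016} yields that the Kronecker MLE exists uniquely a.s.\ at sample size $n^\star$; by the characterization of $N_u$ this gives $N_u\le n^\star$. For the lower bound, the problem is symmetric under transposing every $Y_i$ and swapping $(m_1,\Psi_1)\leftrightarrow(m_2,\Psi_2)$ — this leaves $\ell$ invariant because $\tr(\Psi_1\sum_i Y_i\Psi_2Y_i^T)=\tr(\Psi_2\sum_i Y_i^T\Psi_1Y_i)$ — so I may assume $m_1\ge m_2$ and must show $N_b\ge m_1/m_2$. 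If $1\le n<m_1/m_2$, then $Y=(Y_1,\dots,Y_n)\in\mathbb{R}^{m_1\times nm_2}$ has at most $nm_2<m_1$ columns and hence row rank smaller than $m_1$, so Lemma~\ref{lem:Y-full-row-rank} shows $\ell$ is unbounded above for \emph{every} realization, in particular a.s.; thus no such $n$ satisfies $n\ge N_b$, giving $N_b\ge\lceil m_1/m_2\rceil\ge m_1/m_2$. When $m_1=m_2$ the claim reduces to $N_b\ge 1$, which holds because the thresholds are positive integers.

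There is no substantive obstacle internal to this argument: it reduces to unwinding Definition~\ref{def:thresholds}, the trivial chain ``unique MLE'' $\Rightarrow$ ``MLE'' $\Rightarrow$ ``bounded $\ell$'', and plugging the two extreme sample sizes into the cited results. The only point that genuinely requires care is one I would not reprove here, namely the monotonicity in $n$ of each of the three properties, which makes the thresholds of Definition~\ref{def:thresholds}(ii) well defined and is established in Section~\ref{sec:rectangular-matrices}.
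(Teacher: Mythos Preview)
Your proposal is correct and matches the paper's approach: the paper does not give a standalone proof of Proposition~\ref{prop:known-results} but presents it as a summary of the preceding discussion, citing the main theorem of \cite{soloveychik:2016} for the upper bound and Lemma~\ref{lem:Y-full-row-rank} for the lower bound, with the middle chain implicit from the definitions. Your write-up is in fact more explicit than the paper on two points---the elementary observation that $\lfloor x\rfloor+1>x$ makes the soloveychik condition applicable at $n^\star$, and the symmetry reduction to $m_1\ge m_2$ for the lower bound---and you are right to flag monotonicity in $n$ as the one nontrivial ingredient underlying the well-definedness of the thresholds (the paper relies on this implicitly via the rank characterizations of Section~\ref{sec:algebr-cond-exist}, where adding data matrices can only increase the relevant ranks).
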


We remark that $\tfrac{m_1}{m_2}+\tfrac{m_2}{m_1}$ is integer if and only if $m_1=m_2$.
So, for rectangular matrices ($m_1\not=m_2$), the upper bound on
$N_u(m_1,m_2)$ can be written as $\lceil \tfrac{m_1}{m_2}+\tfrac{m_2}{m_1}\rceil$.

\subsection{New contributions}
\label{subsec:intro:new}

Our interest is in precise formulas for the thresholds.
The case where one matrix dimension divides the other is the
simplest.  We derive the following result in the appendix. 
The result considers $m_2\ge 2$.  The case $m_2=1$ reduces to the vector case in which the MLE exists uniquely a.s.~for $n\ge m_1$ and does not exist (with unbounded likelihood) if $n<m_1$ \cite[Section 3.2]{anderson:2003};  so $N_b(m_1,1)=N_e(m_1,1)=N_u(m_1,1)=m_1$.

\begin{theorem}
  \label{thm:divisible}
  Suppose $m_1\ge m_2\ge 2$ and $m_2|m_1$, i.e., $m_1$ is divisible by $m_2$.  Then
  \[
    N_b(m_1,m_2)\;=\;N_e(m_1,m_2)\;=\; \frac{m_1}{m_2}, \qquad
    N_u(m_1,m_2)\;=\;
    \begin{cases}
      3 &\text{if} \ m_1=m_2,\\
      \frac{m_1}{m_2}+1 &\text{if} \ m_1>m_2.
    \end{cases}
  \]
\end{theorem}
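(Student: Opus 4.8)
The plan is to fix all three thresholds by combining an easy lower bound, an explicit collapse of the profile likelihood at the critical sample size, and the upper bound on $N_u$ already recorded in Proposition~\ref{prop:known-results}. Write $k:=m_1/m_2\in\mathbb Z_{\ge1}$ throughout.

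\emph{The value $m_1/m_2$ for $N_b$ and $N_e$.} If $n<k$, then $Y=(Y_1,\dots,Y_n)\in\mathbb R^{m_1\times nm_2}$ has fewer than $m_1$ columns, hence row rank $<m_1$, so $\ell$ is unbounded above by Lemma~\ref{lem:Y-full-row-rank}; thus $N_b\ge k$. For the matching bound I would show the Kronecker MLE \emph{exists} when $n=k$, which gives $N_e\le k$ and, with $N_b\le N_e$ from Proposition~\ref{prop:known-results}, forces $N_b=N_e=k$. When $n=k$ the stacked matrix $Y$ is square ($nm_2=m_1$) and a.s.\ invertible. Profiling $\Psi_1$ out of \eqref{eq:log-likelihood} (legitimate here by Lemma~\ref{lem:profile}), the map $\Psi_2\mapsto\sup_{\Psi_1}\ell(\Psi_1,\Psi_2)$ equals, up to an additive constant, $nm_1\log\det\Psi_2-nm_2\log\det\bigl(\sum_iY_i\Psi_2Y_i^T\bigr)$. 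Since $\sum_iY_i\Psi_2Y_i^T=Y(I_n\otimes\Psi_2)Y^T$ has determinant $(\det Y)^2(\det\Psi_2)^n$ and $nm_2=m_1$, the two $\log\det\Psi_2$ terms cancel and this profile is \emph{constant in $\Psi_2$}, equal to $-2nm_2\log|\det Y|$ up to an additive constant. Hence $\ell$ is bounded above and its supremum is attained, e.g.\ at $\Psi_2=I_{m_2}$, $\Psi_1=nm_2(YY^T)^{-1}$, so the Kronecker MLE exists and $N_b=N_e=m_1/m_2$.

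\emph{The value of $N_u$ via the stabilizer of the data.} At $n=k$ the invertible matrix $Y$ is fixed, under the action $Y_i\mapsto AY_iB^T$ of $GL(m_1)\times GL(m_2)$, by the positive-dimensional subgroup $\{(Y(I_n\otimes B^{-T})Y^{-1},\,B):B\in GL(m_2)\}$; because $nm_2=m_1$, this action leaves $\ell$ invariant under the reparametrization $(\Psi_1,\Psi_2)\mapsto(A^T\Psi_1A,\,B^T\Psi_2B)$, hence permutes the maximizers of $\ell$. Consequently the set of Kronecker MLEs, which is nonempty by the previous step, maps onto a set of Kronecker products invariant under the induced congruence action $\Psi_2\mapsto B^T\Psi_2B$ on the $\Psi_2$-factor. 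A Kronecker product determines its $\Psi_2$-factor up to a positive scalar, and for $m_2\ge2$ no scaling class in $\mathit{PD}(m_2)$ is fixed by all congruences $\Psi_2\mapsto B^T\Psi_2B$ (take $B=\diag(2,1,\dots,1)$); hence there are at least two, indeed infinitely many, distinct Kronecker MLEs. Thus the Kronecker MLE is non-unique for $n=m_1/m_2$, while for $n<m_1/m_2$ it fails to exist, so ``exists uniquely a.s.''\ fails for all $n\le m_1/m_2$ and $N_u\ge m_1/m_2+1$. When $m_1>m_2$ with $m_2\mid m_1$ we have $\tfrac{m_1}{m_2}+\tfrac{m_2}{m_1}\in\bigl(\tfrac{m_1}{m_2},\tfrac{m_1}{m_2}+1\bigr)$, so Proposition~\ref{prop:known-results} gives $N_u\le m_1/m_2+1$, and the case $m_1>m_2$ is complete.

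\emph{The square case and the one delicate point.} For $m_1=m_2=m$, the argument above with $n=1$ gives $N_u\ge2$, and Proposition~\ref{prop:known-results} gives $N_u\le\lfloor2\rfloor+1=3$; it remains to rule out uniqueness at $n=2$. On the positive-probability event that $Y_2^{-1}Y_1$ has $m$ distinct real eigenvalues, Theorem~\ref{thm:tenberge} brings $(Y_1,Y_2)$ to the canonical form $(\diag(\lambda_1,\dots,\lambda_m),\,I_m)$, whose stabilizer contains the positive-dimensional diagonal torus acting by conjugation; the $\Psi_2$-factor then transforms by $\Psi_2\mapsto D\Psi_2D$ with $D$ diagonal, and again (for $m\ge2$) no scaling class is fixed, so the Kronecker MLE is non-unique on this event. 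Therefore $N_u(m,m)=3$. The one point requiring care is precisely $n=2$ with $m=2$: there the pencil $Y_1-\lambda Y_2$ may instead have a complex-conjugate eigenvalue pair, and a short determinantal computation then shows the profile likelihood is uniquely maximized at $\Psi_2\propto I$, so the MLE \emph{is} unique; thus at $n=2$ non-uniqueness holds only with positive probability, not almost surely. This is still exactly what the definition of $N_u$ requires, and apart from it everything reduces to the determinant identities sketched above together with routine invariance bookkeeping, so I do not anticipate a serious obstacle.
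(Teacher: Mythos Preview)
Your argument is correct, and for $N_b=N_e$ it is exactly the paper's: at $n=m_1/m_2$ the profile $g$ collapses to the constant $2m_2\log|\det Y|$.  The paper then reads non-uniqueness at that sample size directly off the constancy of $g$ (every $\Psi_2$ is a maximizer, and distinct scaling classes of $\Psi_2$ give distinct Kronecker products), whereas you re-derive the same conclusion through the stabilizer $\{(Y(I_n\otimes B^{-T})Y^{-1},B)\}$; this is sound but redundant with what you already showed in the first paragraph.  The genuine difference is the square case at $n=2$: the paper obtains $N_u(m,m)>2$ by invoking Corollary~\ref{cor:square-Nu}, which rests on the minimal-rank machinery of Section~\ref{sec:algebr-cond-exist} (Theorem~\ref{thm:Snm1m2} and Lemma~\ref{lm:r2mmk}), while your route---simultaneous diagonalization on the positive-probability event of distinct real eigenvalues, followed by the diagonal-torus stabilizer forcing a positive-dimensional orbit of maximizers---is more elementary and self-contained.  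Two small corrections: your appeal to Theorem~\ref{thm:tenberge} for the square reduction is misplaced, since that result is stated only for $2m_2>m_1>m_2$; what you actually use is the real Jordan form as in the proof of Proposition~\ref{prop:square-2x2}.  And strictly speaking your stabilizer argument shows ``if the MLE exists it is non-unique,'' which suffices for $N_u>2$ but does not by itself justify the phrase ``the Kronecker MLE is non-unique on this event''; existence at $n=2$ requires the separate work in Section~\ref{sec:square-matrices}.
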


For the case where the matrix dimensions do not divide each other,
Proposition~\ref{prop:known-results} 
yields a solution when one matrix dimension is sufficiently large
compared to the other.

\begin{corollary}
  \label{cor:resolved-already}
  Suppose that $m_1> m_2\ge 2$.  Let $r =m_1\bmod m_2$ be the remainder in
  integer division, so $r\in\{0,\dots,m_2-1\}$.  If $r\ge 1$ and
  \[
    \left\lfloor \frac{m_1}{m_2}\right\rfloor \;> 
     1+ \frac{r^2}{m_2(m_2-r)}, 
  \]
  then 
  \[
    N_b(m_1,m_2)\;=\; N_e(m_1,m_2)\;=\; N_u(m_1,m_2)\;=\; \left\lfloor
      \frac{m_1}{m_2}\right\rfloor +1.
  \]
  This formula holds, in particular, when $m_1\ge m_2^2$ and $r\ge 1$.
\end{corollary}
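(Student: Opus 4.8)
\emph{Proof plan.} The idea is to sandwich all three thresholds between the outer bounds of Proposition~\ref{prop:known-results} and to show that, under the stated hypothesis, those outer bounds coincide. Since $m_1>m_2$, the left-hand bound there is $\max\{m_1/m_2,m_2/m_1\}=m_1/m_2$ and the right-hand bound is $\lfloor m_1/m_2+m_2/m_1\rfloor+1$; because $N_b\le N_e\le N_u$, it suffices to prove that both of these equal $\lfloor m_1/m_2\rfloor+1$. Throughout I would write $q=\lfloor m_1/m_2\rfloor$ and $m_1=qm_2+r$ with $1\le r\le m_2-1$ (using $r\ge 1$), noting in passing that once the displayed inequality is assumed one has $\tfrac{r^2}{m_2(m_2-r)}>0$ and hence $q\ge 2$.

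For the lower bound, $N_b\ge m_1/m_2=q+r/m_2>q$ because $r\ge 1$, and integrality of $N_b$ gives $N_b\ge q+1$. For the upper bound I would show $\lfloor m_1/m_2+m_2/m_1\rfloor=q$. That $m_1/m_2+m_2/m_1\ge m_1/m_2>q$ is immediate, so the only thing to verify is $m_1/m_2+m_2/m_1<q+1$. Substituting $m_1/m_2=q+r/m_2$, this is equivalent to $m_2/m_1<(m_2-r)/m_2$, i.e.\ (cross-multiplying, which is legitimate since $m_2-r\ge 1>0$) to $m_2^2<(qm_2+r)(m_2-r)$; expanding and cancelling, this reads $(q-1)\,m_2(m_2-r)>r^2$, i.e.\ $q>1+\tfrac{r^2}{m_2(m_2-r)}$, which is exactly the hypothesis. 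Hence the right-hand bound of Proposition~\ref{prop:known-results} equals $q+1$, and the sandwich $q+1\le N_b\le N_e\le N_u\le q+1$ closes.

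For the final assertion I would use that $r\le m_2-1$ and $m_2-r\ge 1$ imply $\tfrac{r^2}{m_2(m_2-r)}\le\tfrac{(m_2-1)^2}{m_2}<m_2-1$, so that $1+\tfrac{r^2}{m_2(m_2-r)}<m_2$; since $m_1\ge m_2^2$ gives $q=\lfloor m_1/m_2\rfloor\ge m_2$, the displayed hypothesis is automatically satisfied (here $m_1=m_2^2$ itself has $r=0$, so the extra requirement $r\ge 1$ restricts this case to $m_1\ge m_2^2+1$). Since every step rests on Proposition~\ref{prop:known-results} together with elementary arithmetic, there is no genuine obstacle; the only point demanding a little care is the algebraic identity showing that the displayed hypothesis is precisely the condition $m_1/m_2+m_2/m_1<q+1$ — equivalently that $m_1/m_2+m_2/m_1$ lies in the half-open interval $[q,q+1)$, so that its floor may be read off — and the observation that this inequality is strict, $m_1=m_2$ (excluded here) being the boundary case where $m_1/m_2+m_2/m_1$ is an integer.
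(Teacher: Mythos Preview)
Your proof is correct and follows essentially the same approach as the paper: sandwich the thresholds using Proposition~\ref{prop:known-results}, then verify that the displayed hypothesis is algebraically equivalent to $m_1/m_2+m_2/m_1<\lfloor m_1/m_2\rfloor+1$, and finally bound $r^2/(m_2(m_2-r))$ by its value at $r=m_2-1$ to obtain the $m_1\ge m_2^2$ consequence. The only cosmetic differences are notation ($q$ versus the paper's $h$) and the exact route through the algebra; the paper clears denominators to reach $m_2(m_2-r)h>m_2^2-m_2r+r^2$ before rewriting the right side as $m_2(m_2-r)+r^2$, whereas you substitute into $m_2/m_1<(m_2-r)/m_2$ directly, but the content is identical.
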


Our new results in Theorem~\ref{thm:divisible} and Corollary \ref{cor:resolved-already} can be roughly interpreted as saying that, 
as long as $m_1$ is divisible or approximately divisible by $m_2$, 
then the sample size requirements for existence of an MLE are the same as the sample size requirements for 
the steps of the 
flip-flop algorithm to be well defined. This equivalence is perhaps the reason 
why in the early literature on Kronecker MLEs, these two sample sizes were 
conflated. However, as illustrated in Example~\ref{ex:flipflop}, 
the 
flip-flop algorithm can diverge even though each step is well-defined. 
The main additional results in our paper are to understand and describe 
this discrepancy as well as the difference between existence and unique existence of the MLE. Specifically, we provide 
formulas for the three thresholds we defined
in the regime where the upper bound from
Proposition~\ref{prop:known-results} leaves the largest gap, namely,
the case where
\[
  2m_2 \;\ge\; m_1 \;\ge\; m_2.
\]
We state our main theorem here.

\begin{theorem}
  \label{thm:main}
  Suppose that $2m_2\ge m_1\ge m_2$.  
  Then 
\[
 N_u(m_1,m_2) = \begin{cases}
 3 & \mbox{if $m_1=m_2$}, \\
 2 & \mbox{if $m_1=m_2+1$}, \\
 3 & \mbox{otherwise}, \end{cases}
\]
and
\[
 N_e(m_1,m_2) = N_b(m_1,m_2) = \begin{cases}
 1 & \mbox{if $m_1=m_2$}, \\
 2 & \mbox{if $m_1>m_2$ and $m_1-m_2 \vert m_2$}, \\
 3 & \mbox{otherwise}. \end{cases}
\]
\end{theorem}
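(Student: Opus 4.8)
The plan is to first limit the sample sizes that need attention. In the regime $2m_2\ge m_1\ge m_2$ one has $2\le\tfrac{m_1}{m_2}+\tfrac{m_2}{m_1}\le\tfrac52$, so $\lfloor\tfrac{m_1}{m_2}+\tfrac{m_2}{m_1}\rfloor=2$ and Proposition~\ref{prop:known-results} already gives $N_b,N_e,N_u\le 3$; thus the Kronecker MLE exists uniquely a.s.\ for $n\ge 3$, and only $n=1$ and $n=2$ need to be treated. Throughout I would use that existence, unique existence, and boundedness of $\ell$ depend only on the $GL(m_1)\times GL(m_2)$-orbit of $(Y_1,\dots,Y_n)$ under $Y_i\mapsto PY_iQ^T$ — by \eqref{eq:log-likelihood} such a substitution changes $\ell$ by an additive constant and conjugates $(\Psi_1,\Psi_2)$, preserving the Loewner order and the relation ``equal Kronecker product'' — so the data may be replaced by a canonical representative. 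Since $nm_2\ge m_1$ in all remaining cases, Lemma~\ref{lem:profile} lets me profile out $\Psi_1$ and reduce to the scale-invariant function
\[
 h(\Psi_2)\;=\;m_2\log\det\bigl(S(\Psi_2)\bigr)-m_1\log\det\Psi_2,\qquad S(\Psi_2):=\sum_{i=1}^n Y_i\Psi_2 Y_i^{T},\quad \Psi_2\in\mathit{PD}(m_2),
\]
so that the Kronecker MLE exists (resp.\ $\ell$ is bounded above) iff $h$ attains its infimum (resp.\ is bounded below) on the slice $\{\Psi_2\succ0:\det\Psi_2=1\}$, and it exists uniquely iff in addition all local minima of $h$ agree up to positive scaling (Lemma~\ref{lem:unique}).

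For $n=1$ this is immediate: if $m_1>m_2$ then $Y_1$ has row rank $m_2<m_1$, so $\ell$ is unbounded by Lemma~\ref{lem:Y-full-row-rank} and $N_b\ge 2$; if $m_1=m_2$ then $Y_1$ is a.s.\ invertible, so taking $Y_1=I$ gives $h\equiv 0$ and the MLE exists with non-unique Kronecker product. With $n\ge 3$ this already yields $N_b(m_2,m_2)=N_e(m_2,m_2)=1$ and reduces the square case to proving non-uniqueness at $n=2$. For $n=2$ I would bring the pencil $\lambda Y_1+\mu Y_2$ into Kronecker canonical form (Theorem~\ref{thm:tenberge}). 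If $m_1=m_2$ the generic pencil is regular with simple spectrum, so $(Y_1,Y_2)\sim(I,\Lambda)$ with $\Lambda$ diagonal (complex eigenvalue pairs treated analogously); then $h(\Psi_2)=m_1\log\det(I+\Psi_2^{-1}\Lambda\Psi_2\Lambda)$ is constant on diagonal $\Psi_2$, and majorization of the singular values of $\Psi_2^{1/2}\Lambda\Psi_2^{-1/2}$ by its eigenvalues, together with convexity of $t\mapsto\log(1+e^{t})$, shows those diagonal matrices are exactly the minimizers; hence the MLE exists non-uniquely and $N_u(m_2,m_2)=3$. If $d:=m_1-m_2\ge 1$ the generic pencil is a direct sum of $d$ singular blocks of sizes $(\epsilon_j+1)\times\epsilon_j$ with $\sum_j\epsilon_j=m_2$ and $\epsilon_j\ge 1$; all $\epsilon_j$ equal $\epsilon:=m_2/d$ precisely when $d\mid m_2$. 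If $d\nmid m_2$ (hence $m_1>m_2+1$) some block has the minimal index $\epsilon_{\min}=\lfloor m_2/d\rfloor$ with $d\epsilon_{\min}<m_2$; scaling by $\delta\to 0^{+}$ the $\Psi_2$-coordinates carried by one such block makes $\det S$ vanish to order $\epsilon_{\min}+1$ and $\det\Psi_2$ to order $\epsilon_{\min}$, so $h\to-\infty$ at rate $(m_2-d\epsilon_{\min})\log\delta$ with positive coefficient; hence $\ell$ is unbounded at $n=2$ and, with $n\ge 3$, $N_b=N_e=N_u=3$.

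There remains the divisible case $d\mid m_2$ at $n=2$, where the canonical form is $Y_1=I_d\otimes A$, $Y_2=I_d\otimes B$ with $A,B\in\mathbb{R}^{(\epsilon+1)\times\epsilon}$ obtained from $I_\epsilon$ by adjoining a zero row at the bottom, respectively the top. For $\Psi_2=ODO^{T}$ one gets $S(\Psi_2)=\sum_{j=1}^{m_2}d_jM_j$ with $M_j\succeq0$ of rank $\le 2$, so $\det S$ is a polynomial in $(d_1,\dots,d_{m_2})$ with nonnegative coefficients whose exponent vectors have all entries $\le 2$ and sum to $m_1$; the key estimate is that the balanced point $\tfrac{m_1}{m_2}\mathbf1$ lies in the relative interior of this Newton polytope, equivalently $\det S(\Psi_2)\ge c\,(\det\Psi_2)^{m_1/m_2}$ for some $c>0$. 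This makes $h$ coercive on $\{\det\Psi_2=1\}$, so the MLE exists and $N_e=N_b=2$ (consistent with $N_b\ge 2$ from $n=1$). For uniqueness I would combine two facts coming from the Kronecker structure of the canonical data: the identity $h(\Theta\otimes\Phi)=d^{2}h_0(\Phi)$ for all $\Theta\in\mathit{PD}(d)$, where $h_0$ is the single-block ($d=1$) profile; and the invariance of $h$ under conjugation of $\Psi_2$ by $P\otimes I_\epsilon$, $P\in O(d)$, which comes from the $O(d)$-stabilizer of the canonical data. If $d\ge 2$: the minimizing set of $h$ is $O(d)$-conjugation-invariant, and were it a single scaling ray it would have to be $\{c\,I_d\otimes\Phi^*:c>0\}$ (the only scaling rays fixed by all such conjugations when $d\ge2$), but then $h(\Theta\otimes\Phi^*)=d^{2}h_0(\Phi^*)$ is independent of $\Theta$, so every $\Theta\otimes\Phi^*$ is also a minimizer — a contradiction; hence there are minimizers differing by more than scaling, giving distinct Kronecker products and $N_u=3$. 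If $d=1$, i.e.\ $m_1=m_2+1$, then $h=h_0$ on all of $\mathit{PD}(m_2)$ and I would show $h_0$ has a minimizer that is unique up to scaling (e.g.\ via strict geodesic convexity of $h_0$ transverse to the scaling direction, or by direct analysis of the stationarity equations), giving $N_u=2$.

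I expect the hard part to be this divisible case at $n=2$: establishing the sharp lower bound $\det S(\Psi_2)\ge c\,(\det\Psi_2)^{m_1/m_2}$ — equivalently, locating $\tfrac{m_1}{m_2}\mathbf1$ inside the Newton polytope of $\det S$ — and, for $m_1=m_2+1$, proving uniqueness of the minimizer of the single-block profile $h_0$. These are the two steps where the combinatorial fine structure of the Kronecker canonical form must genuinely be exploited, rather than the soft invariance, monotonicity, and cofactor-degeneration arguments that suffice elsewhere; a single geodesic-convexity argument for $h$ on $\mathit{PD}(m_2)$, if it can be proved, would subsume both but does not obviously make the proof shorter.
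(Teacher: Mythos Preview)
Your outline has a genuine internal contradiction in the divisible case $d:=m_1-m_2\ge 2$ with $d\mid m_2$. You assert that the balanced point lies in the \emph{relative interior} of the Newton polytope and conclude that $h$ is \emph{coercive} on $\{\det\Psi_2=1\}$; but by Lemma~\ref{lem:unique} coercivity is equivalent to \emph{unique} existence of the Kronecker MLE, which directly contradicts the non-uniqueness you then set out to prove. Your own tensor identity $h(\Theta\otimes\Phi)=d^{2}h_0(\Phi)$ already shows $h$ is constant along the non-scaling family $\Theta\mapsto\Theta\otimes\Phi^*$, so $h$ cannot be coercive when $d\ge 2$; at best the balanced point sits on a proper face of the polytope, giving boundedness but not coercivity. (Also, the Newton polytope of $\det S$ in the eigenvalues of $\Psi_2$ depends on the orthogonal factor $O$ in $\Psi_2=ODO^{T}$, so even the boundedness claim would require a uniform-in-$O$ argument you do not supply.)

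This breaks your existence argument for $N_e=2$ when $d\ge 2$: boundedness alone does not yield a minimizer. There is a repair within your own framework, and it is essentially what the paper does. Once the single-block profile $h_0$ (the case $m_1=m_2+1$) is shown to be coercive with minimizer $\Phi^*$, a direct gradient computation using $Y_i=I_d\otimes A_i$ shows that $I_d\otimes\Phi^*$ is a critical point of $h$ on all of $\mathit{PD}(m_2)$ (the gradient is block-diagonal there, so off-block directions vanish automatically); geodesic convexity then makes it a global minimum, and your tensor identity furnishes the continuum $\{\Theta\otimes\Phi^*\}$ of minimizers. But notice that everything now rests on the $m_1=m_2+1$ case---coercivity and uniqueness for $h_0$---which you explicitly flag as unresolved. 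The paper closes precisely this gap, not by Newton polytopes or a direct strict-convexity argument, but via an algebraic route: it proves rank criteria (Theorems~\ref{thm:necessary-rank}--\ref{thm:sufficient-rank}) reducing boundedness and coercivity to the sign of a minimal-rank invariant $S_n(m_1,m_2)$, evaluates $S_2(m_1,m_2)$ by integer programming over the Kronecker canonical form (Theorems~\ref{thm:r2}--\ref{thm:S2}), and only in the boundary case $S_2=0$ exhibits explicit critical points (Propositions~\ref{prop:closed-form-mle-m2+1}--\ref{prop:crit-point-nonunique}). Where your approach does work---the single-block degeneration for $d\nmid m_2$ and the $O(d)$-symmetry argument for non-uniqueness---it is a pleasant and more direct alternative to the paper's machinery; but as written it does not stand on its own for the divisible case.
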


The ingredients needed to establish Theorem~\ref{thm:main} will be
developed in the remainder of the paper.  How they fit together is
also outlined in the proof of Theorem~\ref{thm:main} that we include
in the appendix.  When combined with additional calculations using
Gr\"obner basis methods to check algebraic conditions given in
Theorems~\ref{thm:necessary-rank} and \ref{thm:sufficient-rank},
Theorem~\ref{thm:main} provides $N_u(m_1,m_2)$, $N_e(m_1,m_2)$, and
$N_b(m_1,m_2)$ for small $m_1$ and $m_2$; see Table
\ref{table:threshold}.

\medskip
The remainder of the paper is organized as follows.  We begin by recalling preliminary results
concerning convexity properties of the negated log-likelihood
function, and we introduce a profile likelihood function
(Section~\ref{sec:preliminaries}).  We then give algebraic conditions
for existence and uniqueness of the Kronecker MLE
(Section~\ref{sec:algebr-cond-exist}).  These are formulated in terms
of rank drops, meaning the extent to which the rank of a data matrix
may be reduced through certain linear transformations.  The sufficient
condition also appears in a more geometric form in the proofs in
\cite{soloveychik:2016}.  The key ingredients for the proof of
Theorem~\ref{thm:divisible} are derived in
Section~\ref{sec:square-matrices}.  The proof of
Theorem~\ref{thm:main} requires a study of the case of sample size
$n=2$ and is developed in Section~\ref{sec:rectangular-matrices}.  Our
arguments use invariance of the likelihood surface under group actions,
and we use certain canonical forms for generic data matrices under the
group action.  For data in such canonical form, we are able to explicitly give the
critical points of the likelihood function (Section~\ref{sec:mle}).
We end with a brief conclusion
(Section~\ref{sec:conclusion}).

\begin{table}[t]
\caption{ML thresholds: 
Unique existence $N_u(m_1,m_2)$ (left); \hspace{\textwidth}
Bounded likelihood/existence $N_e(m_1,m_2)=N_b(m_1,m_2)$ (right).}
\label{table:threshold}
\begin{small}
\begin{minipage}[h]{0.43\textwidth}
    \begin{tabular}{c|*{8}{c@{\;\;\;\,}}c@{\;\;\,}c}
\hline
      \small\diagbox[width=7ex]{$m_1$}{$m_2$}
      &1 & 2& 3& 4& 5& 6& 7& 8& 9& 10 \\
      \hline
      1&1\\
      2&2& 3 \\
      3&3& 2& 3 \\
      4&4& 3& 2& 3 \\
      5&5& 3& 3& 2& 3 \\
      6&6& 4& 3& 3& 2& 3 \\
      7&7& 4& 3& 3& 3& 2& 3 \\
      8&8& 5& 3 
             & 3& 3& 3& 2 &3 \\
      9&9& 5& 4& 3& 3& 3& 3 &2 &3 \\
     10&10& 6& 4& 3& 3& 3& 3 &3 &2 &3 \\
\hline
    \end{tabular}
\end{minipage}
\qquad
\begin{minipage}[h]{0.43\textwidth}
    \begin{tabular}{c|*{8}{c@{\;\;\;\,}}c@{\;\;\,}c}
\hline
      \small\diagbox[width=7ex]{$m_1$}{$m_2$}
      &1 & 2& 3& 4& 5& 6& 7& 8& 9& 10 \\
      \hline
      1&1\\
      2&2& 1 \\
      3&3& 2& 1 \\
      4&4& 2& 2& 1 \\
      5&5& 3& 3& 2& 1 \\
      6&6& 3& 2& 2& 2& 1 \\
      7&7& 4& 3& 3& 3& 2& 1 \\
      8&8& 4& 3& 2& 3& 2& 2& 1 \\
      9&9& 5& 3& 3& 3& 2& 3& 2& 1 \\
     10&10& 5& 4& 3& 2& 3& 3& 2& 2& 1 \\
\hline
    \end{tabular}
\end{minipage}
\end{small}
\end{table}

\section{Preliminaries}
\label{sec:preliminaries}

\subsection{Geodesic convexity and uniqueness of MLE}

The log-likelihood function $\ell$ defined
in~(\ref{eq:log-likelihood}) is not concave.  However, it can be shown
to be g-concave, that is, $\ell$ is concave along suitable geodesics
between any pair of matrices in
$\mathit{PD}(m_1)\times\mathit{PD}(m_2)$.  The geodesics are obtained
from the geodesics in $\mathit{PD}(m_1)$ and in $\mathit{PD}(m_2)$, which take the form
\begin{equation}
  \label{eq:geodesic}
  \gamma_t^{(j)}(Q_0,Q_1) \;=\;Q_0^{\frac{1}{2}}\left(
    Q_0^{-\frac{1}{2}}Q_1Q_0^{-\frac{1}{2}}\right)^t
  Q_0^{\frac{1}{2}}, \quad t\in[0,1],
\end{equation}
when linking two matrices $Q_0,Q_1\in\mathit{PD}(m_j)$, $j=1,2$. The g-concavity
of $\ell$ amounts to 
\[
\ell\bigl(\gamma_t^{(1)}\bigl(Q_{0}^{(1)},Q_{1}^{(1)}\bigr), \gamma_t^{(2)}\bigl(Q_{0}^{(2)},Q_{1}^{(2)}\bigr)\bigr) \ge 
t\cdot\ell\bigl(Q_{0}^{(1)},Q_{0}^{(2)}\bigr) +
(1-t)\cdot\ell\bigl(Q_{1}^{(1)},Q_{1}^{(2)}\bigr)
\]
for all $t\in[0,1]$, $Q_0^{(j)},Q_{1}^{(j)}\in\mathit{PD}(m_j)$, $j=1,2$.  This property of $\ell$ was observed in \cite{wiesel:2012} and yields the following
fact; see also \cite[Chap.~6]{rapcsak:1997}.

\begin{lemma}
  \label{lem:gconvex}
  Every critical point (i.e., point of zero gradient) and, in particular, every local maximum of
  $\ell$ is a global maximum.
\end{lemma}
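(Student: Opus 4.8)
The goal is to prove Lemma~\ref{lem:gconvex}: every critical point of $\ell$ is a global maximum, given that $\ell$ is g-concave with respect to the geodesics from \eqref{eq:geodesic}.

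The plan is to exploit the g-concavity together with the fact that $\mathit{PD}(m_1)\times\mathit{PD}(m_2)$ is a manifold in which any two points are joined by a geodesic of the prescribed form. The key observation is a standard one in geodesic convex analysis: if a function is g-concave along a family of geodesics that connect every pair of points, then a point with vanishing Riemannian (or merely directional) gradient must be a global maximum. I would carry this out as follows.

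First I would set up the one-parameter comparison. Fix a critical point $(\hat Q^{(1)},\hat Q^{(2)})$ of $\ell$, and let $(Q^{(1)},Q^{(2)})$ be an arbitrary other point in $\mathit{PD}(m_1)\times\mathit{PD}(m_2)$. Consider the curve
\[
\varphi(t) \;=\; \ell\bigl(\gamma_t^{(1)}(\hat Q^{(1)},Q^{(1)}),\,\gamma_t^{(2)}(\hat Q^{(2)},Q^{(2)})\bigr), \qquad t\in[0,1],
\]
which by the displayed g-concavity inequality satisfies $\varphi(t)\ge (1-t)\varphi(0)+t\varphi(1)$ for all $t\in[0,1]$; here I use that $\gamma_0^{(j)}=\hat Q^{(j)}$ and $\gamma_1^{(j)}=Q^{(j)}$, and I have swapped the roles of the endpoints relative to \eqref{eq:geodesic} so that $t=0$ sits at the critical point. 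In particular $\varphi$ is a concave function on $[0,1]$, so its right derivative at $0$ exists and dominates the slope of the chord: $\varphi'(0^+)\ge \varphi(1)-\varphi(0)$.

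Next I would argue that $\varphi'(0^+)\le 0$. The curve $t\mapsto(\gamma_t^{(1)},\gamma_t^{(2)})$ is a smooth curve through $(\hat Q^{(1)},\hat Q^{(2)})$ in the open set $\mathit{PD}(m_1)\times\mathit{PD}(m_2)$, and $\ell$ is smooth there, so by the chain rule $\varphi'(0^+)$ equals the Euclidean gradient of $\ell$ at $(\hat Q^{(1)},\hat Q^{(2)})$ paired with the initial velocity vector of the curve; since the gradient vanishes at a critical point, $\varphi'(0^+)=0$. Combining with the chord inequality gives $0=\varphi'(0^+)\ge\varphi(1)-\varphi(0)$, i.e.\ $\ell(Q^{(1)},Q^{(2)})\le\ell(\hat Q^{(1)},\hat Q^{(2)})$. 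As $(Q^{(1)},Q^{(2)})$ was arbitrary, $(\hat Q^{(1)},\hat Q^{(2)})$ is a global maximum. The statement for local maxima follows since a local maximum is in particular a critical point (the domain is open and $\ell$ is differentiable).

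The main technical point to be careful about is the differentiability of the geodesic $\gamma_t^{(j)}(\hat Q^{(j)},Q^{(j)})$ at $t=0$ and the validity of the chain-rule computation of $\varphi'(0^+)$: the expression $(Q_0^{-1/2}Q_1Q_0^{-1/2})^t$ involves a matrix power, which is smooth in $t$ on $(0,1)$ and continuous at the endpoints, and one should note it is in fact real-analytic in $t$ on a neighborhood of $[0,1]$ because the matrix $Q_0^{-1/2}Q_1Q_0^{-1/2}$ is positive definite (so $A^t=\exp(t\log A)$ with $\log A$ well defined and $A^t\in\mathit{PD}(m_j)$ for all real $t$). This guarantees that $\varphi$ is smooth on an open interval containing $0$, legitimizing the identification of $\varphi'(0)$ with the directional derivative of $\ell$ and hence with $0$. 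Everything else is the routine fact that a concave function lies below its tangent line. I would also remark that this is exactly the argument recorded in \cite[Chap.~6]{rapcsak:1997} specialized to the present geodesics.
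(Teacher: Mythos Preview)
Your argument is correct and is precisely the standard geodesic-convexity argument the paper has in mind: the paper does not spell out a proof of this lemma but simply invokes the g-concavity observed in \cite{wiesel:2012} and refers to \cite[Chap.~6]{rapcsak:1997}, which contains exactly the reasoning you give. One minor remark: the displayed g-concavity inequality in the paper has the coefficients $t$ and $1-t$ interchanged (a typo), and your version $\varphi(t)\ge(1-t)\varphi(0)+t\varphi(1)$ is the correct one.
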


Suppose $\ell$ has two distinct maxima.  Then, by concavity, $\ell$ is
constant along the geodesic linking them.  We may extend the geodesic
by considering $t\in\mathbb{R}$ in~(\ref{eq:geodesic}).  Along this
extended geodesic, $\ell$ remains constant even as the underlying
matrices diverge or approach the boundary of $\mathit{PD}(m_1)$ or
$\mathit{PD}(m_2)$.  This can be used to guarantee uniqueness of the
Kronecker MLE as noted in \cite{soloveychik:2016}.  Call the
log-likelihood function $\ell$ \emph{coercive} if
\[
  \lim_{t\to\infty} \ell\bigl(\Psi_1^{(t)},\Psi_2^{(t)}\bigr)
  \;=\;-\infty
\]
for all sequences $\bigl(\Psi_1^{(t)},\Psi_2^{(t)}\bigr)$ that diverge or
approach the boundary of $\mathit{PD}(m_1)\times\mathit{PD}(m_2)$.  If
$\ell$ is coercive, then clearly the Kronecker MLE exists and it
exists uniquely based on our above discussion.  However, more is true according to the following lemma that is  proven as part of Lemma 4 in \cite{soloveychik:2016}.

\begin{lemma}
  \label{lem:unique}
  The Kronecker MLE exists uniquely if and only if the log-likelihood
  function $\ell$ is coercive.
\end{lemma}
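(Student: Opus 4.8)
My approach is to regard $\ell$ as a function on the slice $\mathcal{S}:=\{(\Psi_1,\Psi_2)\in\mathit{PD}(m_1)\times\mathit{PD}(m_2):\det\Psi_1=1\}$. By the scale invariance \eqref{eq:scaling} this entails no loss, and it is on $\mathcal{S}$ that coercivity is genuinely meaningful: since the geodesic \eqref{eq:geodesic} joining two determinant-one matrices again has determinant one, $\mathcal{S}$ is geodesically closed (hence itself a complete, simply connected, nonpositively curved manifold), a sequence in $\mathcal{S}$ ``diverges or approaches the boundary of $\mathit{PD}(m_1)\times\mathit{PD}(m_2)$'' exactly when it leaves every compact subset of $\mathcal{S}$, and $(\Psi_1,\Psi_2)\mapsto\Psi_2\otimes\Psi_1$ is injective on $\mathcal{S}$. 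I would first record two facts. (a) Along any geodesic $t\mapsto(\gamma_t^{(1)},\gamma_t^{(2)})$ of $\mathcal{S}$, extended to all $t\in\mathbb{R}$, the two $\log\det$ terms of $\ell$ in \eqref{eq:log-likelihood} are affine in $t$ while the trace term is a finite sum of products of entries of matrix exponentials $e^{tX}$ and hence is entire in $t$; thus $t\mapsto\ell(\gamma_t^{(1)},\gamma_t^{(2)})$ is real-analytic, and being concave by g-concavity, it is constant on all of $\mathbb{R}$ whenever it is constant on a nondegenerate subinterval. (b) A nonconstant geodesic of $\mathcal{S}$ leaves every compact subset of $\mathcal{S}$ as $|t|\to\infty$.

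For the ``if'' direction (essentially the sketch preceding the lemma) I would argue as follows. If $\ell$ is coercive on $\mathcal{S}$, then for any $c_0$ in the range of $\ell$ the superlevel set $\{p\in\mathcal{S}:\ell(p)\ge c_0\}$ is closed and, by coercivity, stays away from infinity and from the boundary, hence is compact; so $\ell$ attains a maximum $M$ on $\mathcal{S}$, which by \eqref{eq:scaling} is its maximum over the whole domain, and the Kronecker MLE exists. Were it not unique, Lemma~\ref{lem:gconvex} together with injectivity on $\mathcal{S}$ would give distinct points $p_0\ne p_1$ of $\mathcal{S}$ with $\ell(p_0)=\ell(p_1)=M$; by g-concavity $\ell\equiv M$ on the geodesic segment from $p_0$ to $p_1$, so by (a) $\ell\equiv M$ along the whole extended geodesic, which by (b) leaves every compact subset of $\mathcal{S}$, contradicting coercivity.

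The substantive direction is ``only if,'' which I would prove by contraposition. Suppose the Kronecker MLE exists uniquely but $\ell$ is not coercive on $\mathcal{S}$. Existence together with Lemma~\ref{lem:gconvex} and injectivity gives a unique point $p^\ast\in\mathcal{S}$ attaining $M=\max\ell$, with $\ell\le M$ throughout. Non-coercivity gives a sequence $p^{(k)}\in\mathcal{S}$ that leaves every compact subset of $\mathcal{S}$ and, after passing to a subsequence, satisfies $\ell(p^{(k)})\ge-C$ for all $k$. Let $\sigma_k$ be the unit-speed geodesic of $\mathcal{S}$ from $p^\ast$ towards $p^{(k)}$, of length $d_k\to\infty$, with initial unit tangent vector $v_k\in T_{p^\ast}\mathcal{S}$. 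For fixed $\rho\ge 0$ and $k$ with $d_k>\rho$, g-concavity along $\sigma_k$ gives $\ell(\sigma_k(\rho))\ge(1-\rho/d_k)M+(\rho/d_k)\ell(p^{(k)})$, whose right-hand side tends to $M$ as $k\to\infty$; since $\ell\le M$, this forces $\ell(\sigma_k(\rho))\to M$. As the $v_k$ lie in the compact unit sphere of $T_{p^\ast}\mathcal{S}$, after a further subsequence $v_k\to v\ne 0$, and continuous dependence of geodesics on initial data gives $\sigma_k(\rho)\to\eta(\rho)$ for each $\rho$, where $\eta$ is the geodesic ray from $p^\ast$ in direction $v$. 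Continuity of $\ell$ then yields $\ell(\eta(\rho))=M$ for all $\rho\ge 0$. But for $\rho>0$ the point $\eta(\rho)\in\mathcal{S}$ differs from $p^\ast$ (since $v\ne 0$) and is a global maximizer of $\ell$, hence has a Kronecker product different from that of $p^\ast$, contradicting uniqueness.

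I expect the main obstacle to be exactly this last limiting step: producing from the ``bad sequence'' an honest geodesic ray emanating from the maximizer along which $\ell$ stays at its maximal value. Parametrizing by arc length is essential, so that the limiting direction $v$ is nonzero and $\eta$ is a genuine ray rather than a constant path; and one must keep track throughout---via the reduction to $\mathcal{S}$---of what ``diverges or approaches the boundary'' should mean, since on the full domain $\ell$ is never coercive on account of \eqref{eq:scaling}. A secondary point needing care is fact (a): propagating constancy along an extended geodesic rests on the real-analyticity of the trace term, which follows from the power-series expansion of the matrix exponential.
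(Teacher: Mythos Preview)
Your argument is correct. Note, however, that the paper does not supply its own proof of this lemma: it simply records that the statement ``is proven as part of Lemma~4 in \cite{soloveychik:2016},'' so there is no in-paper argument to compare against beyond the informal sketch in the paragraph preceding the lemma.

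Your self-contained proof via the slice $\mathcal{S}=\{\det\Psi_1=1\}$ is the natural way to make that sketch rigorous. Two points are worth highlighting. First, your observation that coercivity must be interpreted modulo the scaling \eqref{eq:scaling} is correct and necessary; taken literally on the full domain, $\ell$ is never coercive, and the paper is informal on this point (the same issue recurs for the profile function $g$ in \eqref{eq:profile-neglik}). Working on $\mathcal{S}$, which is totally geodesic and on which the Kronecker-product map is injective, resolves this cleanly. Second, your fact~(a)---that real-analyticity of the trace term along geodesics is what propagates constancy from a segment to the entire extended geodesic---is genuinely needed in the ``if'' direction: concavity alone would permit $\ell$ to turn and go to $-\infty$ past the segment (e.g.\ a piecewise-linear tent). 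The paper's sketch asserts this propagation without justification, so you are filling a real gap. Your ``only if'' direction, extracting a limiting geodesic ray from the unique maximizer via compactness of the unit tangent sphere and continuous dependence of geodesics on initial data, is the standard Hadamard-manifold argument and is sound.
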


In the following we will often consider properties that the normal data matrices $Y_1,\dots,Y_n$ possess almost surely.  More precisely, the properties will hold as long as the data lie outside certain, not further specified lower-dimensional sets that are defined by polynomial equations.  We indicate this fact by speaking of data matrices that are \emph{generic}.

\subsection{Profile likelihood}
\label{sec:profile}

For any $\Psi_2\in\mathit{PD}(m_2)$, the log-likelihood function
admits a section
$\ell_{\Psi_2}: \mathit{PD}(m_1)\to \mathbb{R}$ given by
$\ell_{\Psi_2}(\Psi_1)= \ell(\Psi_1,\Psi_2)$.

\begin{lemma}
  \label{lem:profile}
  Suppose $n m_2\ge m_1\ge m_2$.  For generic data $Y_1,\dots,Y_n$,
  every section $\ell_{\Psi_2}$, $\Psi_2\in\mathit{PD}(m_2)$,
  achieves its maximum on $\mathit{PD}(m_1)$ uniquely at
  \[
  \Psi_1(\Psi_2) = \left(\frac{1}{nm_2} \sum_{i=1}^n Y_i\Psi_2 Y_i^T\right)^{-1}.
  \]
\end{lemma}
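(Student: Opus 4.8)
The plan is to reduce the maximization of each section $\ell_{\Psi_2}$ to the standard problem of maximizing a Gaussian log-likelihood with a fixed positive definite scatter matrix, and then to show that the positive definiteness needed for this reduction holds simultaneously for \emph{all} $\Psi_2\in\mathit{PD}(m_2)$ as soon as the stacked data matrix $Y=(Y_1,\dots,Y_n)\in\mathbb{R}^{m_1\times nm_2}$ has full row rank.

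First I would fix $\Psi_2\in\mathit{PD}(m_2)$ and abbreviate $S(\Psi_2)=\sum_{i=1}^n Y_i\Psi_2 Y_i^T$. Since the term $nm_1\log\det(\Psi_2)$ in~\eqref{eq:log-likelihood} is constant in $\Psi_1$, maximizing $\ell_{\Psi_2}$ over $\mathit{PD}(m_1)$ is the same as maximizing $f(\Psi_1):=nm_2\log\det(\Psi_1)-\tr(\Psi_1 S(\Psi_2))$. The map $\log\det$ is strictly concave on $\mathit{PD}(m_1)$ (its second directional derivative at $\Psi_1$ along a nonzero symmetric $H$ equals $-\|\Psi_1^{-1/2}H\Psi_1^{-1/2}\|_F^2<0$), and adding the linear term $-\tr(\Psi_1 S(\Psi_2))$ preserves strict concavity. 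Hence, once $S(\Psi_2)$ is invertible, any point of vanishing gradient of $f$ is automatically its unique global maximizer. The gradient equation $\nabla f(\Psi_1)=nm_2\Psi_1^{-1}-S(\Psi_2)=0$ is solved by $\Psi_1=\bigl(\tfrac{1}{nm_2}S(\Psi_2)\bigr)^{-1}$, which lies in $\mathit{PD}(m_1)$ exactly when $S(\Psi_2)\succ 0$; this is the claimed maximizer, and it is both attained and unique by strict concavity, so no separate coercivity argument is needed.

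It then remains to check that $S(\Psi_2)\succ 0$ for every $\Psi_2\in\mathit{PD}(m_2)$ when the data is generic. Writing $\Psi_2=CC^T$ with $C$ invertible gives $S(\Psi_2)=\sum_i(Y_iC)(Y_iC)^T=\tilde Y\tilde Y^T$ with $\tilde Y=Y(I_n\otimes C)$. Since $I_n\otimes C$ is invertible we have $\rk(\tilde Y)=\rk(Y)$, so $S(\Psi_2)\succ 0$ if and only if $\rk(Y)=m_1$, a condition not involving $\Psi_2$. Because $nm_2\ge m_1$, the set of $Y\in\mathbb{R}^{m_1\times nm_2}$ of rank less than $m_1$ is a proper algebraic subvariety (the common vanishing locus of the $m_1\times m_1$ minors), so generic $Y$ has full row rank, and the conclusion then holds uniformly over all $\Psi_2$. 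The only place where genericity enters — and the only content beyond routine calculus of $\log\det$ — is exactly this rank statement; the substitution $\Psi_2=CC^T$ is what converts the $\Psi_2$-dependent positivity of $S(\Psi_2)$ into the single $\Psi_2$-free rank condition on $Y$, and that is the step to be careful about.
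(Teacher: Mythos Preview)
Your proof is correct and follows essentially the same approach as the paper: reduce the section maximization to the standard Gaussian covariance estimation problem contingent on $S(\Psi_2)\succ 0$, then show that this positive-definiteness is equivalent to the single $\Psi_2$-free condition $\rk(Y)=m_1$, which holds generically since $nm_2\ge m_1$. The only cosmetic differences are that the paper cites a reference for the maximization step rather than spelling out strict concavity, and it reaches the rank condition via a kernel argument ($v\in\ker S(\Psi_2)\iff v\in\ker Y_i^T$ for all $i$) instead of your Cholesky factorization $\Psi_2=CC^T$; both routes land on the same rank requirement for $Y$.
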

\begin{proof}
  According to the well-known results for ML estimation of a Gaussian
  covariance matrix \cite[Section 3.2]{anderson:2003}, the claim is
  true for a particular restriction $\ell_{\Psi_2}$ if the positive
  semi-definite $m_1\times m_1$ matrix
  \[
  \sum_{i=1}^n Y_i\Psi_2 Y_i^T
  \]
  is non-singular.  The matrix is a sum of positive semi-definite
  matrices.  Hence, a vector $v\in\mathbb{R}^{m_2}$ is in its kernel
  if and only if
  $v^T Y_i\Psi_2Y_i^T v=0$
  for all $i=1,\dots,n$.  Since $\Psi_2$ is positive definite, this
  holds if and only if $v$ is in the kernel of each matrix $Y_i^T$ if
  and only if $v$ is in the kernel of
  \[
  \sum_{i=1}^n Y_i Y_i^T.
  \]
  Being a sum of $nm_2\ge m_1$ generic rank 1 matrices, this
  $m_1\times m_1$ matrix has full rank $m_1$.  The kernel is thus zero.
\end{proof}

In the regime of interest, with $nm_2\ge m_1\ge m_2$,
Lemma~\ref{lem:profile} ensures that for generic data $Y_1,\dots,Y_n$
the profile log-likelihood function
\[
  \ell_\text{prof}:\Psi_2\mapsto\ell(\Psi_1(\Psi_2),\Psi_2)
\]
is well-defined on $\mathit{PD}(m_2)$.  Now, $\ell$ is bounded
above/achieves its maximum on
$\mathit{PD}(m_1)\times \mathit{PD}(m_2)$ if and only if
$\ell_\text{prof}$ is bounded above/achieves its maximum on
$\mathit{PD}(m_2)$.  This in turn is equivalent to the function
\begin{equation}
  \label{eq:profile-neglik}
  g(\Psi) = m_2\log\det\left(\sum_{i=1}^n Y_i\Psi Y_i^T\right) - m_1\log\det(\Psi)
\end{equation}
being bounded below/achieving its minimum on $\mathit{PD}(m_2)$.  We
note that
\begin{align}
  \label{eq:profile-neglik-Y}
  g(\Psi) 
  &= m_2\log\det\left(Y[I_n \otimes \Psi] Y^T\right) - m_1\log\det(\Psi),
\end{align}
where
$Y = \begin{pmatrix} Y_1, \dots, Y_n \end{pmatrix}
\in\mathbb{R}^{m_1\times nm_2}$.  The function $g$ is geodesically
convex (g-convex).  This follows from its definition as negatived
profile of a g-concave log-likelihood function.  It can also be seen
directly by observing that $\log\det(\cdot)$ is linear along the
geodesics from~(\ref{eq:geodesic}) and verifying that the first term
in~(\ref{eq:profile-neglik}) is g-convex; see Lemma 2
in \cite{wiesel:2012}.

Call $g$ coercive if $g\bigl(\Psi^{(t)}\bigr)$ tends to $+\infty$ for any
sequence $\Psi^{(t)}$ that diverges or approaches the boundary of
$\mathit{PD}(m_2)$.  Then our observations may be summarized as follows.

\begin{lemma}
  \label{lem:profile-likelihood}
  The Kronecker MLE exists if and only if the function $g$
  from~(\ref{eq:profile-neglik}) achieves its minimum on
  $\mathit{PD}(m_2)$.  The Kronecker MLE exists uniquely if and only
  if $g$ is coercive.
\end{lemma}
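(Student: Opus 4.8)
The plan is to derive the lemma as a short consequence of Lemma~\ref{lem:profile}, Lemma~\ref{lem:unique}, and the scale invariance~\eqref{eq:scaling}, using only one explicit determinant--trace computation. Throughout, the statement is read in the regime $nm_2\ge m_1\ge m_2$ for generic data, so that $\sum_i Y_i\Psi Y_i^T$ is nonsingular for every $\Psi\in\mathit{PD}(m_2)$ and the profile $\ell_\text{prof}$ and the function $g$ are well-defined, exactly as in the discussion preceding the statement; outside this regime the Kronecker MLE fails to exist by Lemma~\ref{lem:Y-full-row-rank}. The computation I would record is that substituting the section-maximizer $\Psi_1(\Psi_2)=nm_2\bigl(\sum_i Y_i\Psi_2 Y_i^T\bigr)^{-1}$ from Lemma~\ref{lem:profile} into~\eqref{eq:log-likelihood} collapses the trace term to $\tr\bigl(\Psi_1(\Psi_2)\sum_i Y_i\Psi_2 Y_i^T\bigr)=nm_2\tr(I_{m_1})=nm_1m_2$, a constant, while $nm_2\log\det\Psi_1(\Psi_2)=-nm_2\log\det\bigl(\sum_i Y_i\Psi_2 Y_i^T\bigr)$ up to a constant, so that $\ell_\text{prof}(\Psi_2)$ equals $-n$ times the expression in~\eqref{eq:profile-neglik} plus a constant depending only on $(m_1,m_2,n)$. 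Since Lemma~\ref{lem:profile} gives $\ell_\text{prof}(\Psi_2)=\max_{\Psi_1\in\mathit{PD}(m_1)}\ell(\Psi_1,\Psi_2)$ with the maximum attained, it follows at once that $\ell$ attains its maximum on $\mathit{PD}(m_1)\times\mathit{PD}(m_2)$ iff $\ell_\text{prof}$ attains its maximum on $\mathit{PD}(m_2)$ iff $g$ attains its minimum on $\mathit{PD}(m_2)$, which is the first assertion.

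For the second assertion I would invoke Lemma~\ref{lem:unique}, reducing matters to ``$\ell$ coercive $\iff$ $g$ coercive.'' Care is needed because~\eqref{eq:scaling} --- and its consequence $g(c\Psi)=g(\Psi)$, reflecting the same scale invariance --- makes both $\ell$ and $g$ constant along the scaling orbits $c\mapsto(c\Psi_1,c^{-1}\Psi_2)$; ``coercive'' is therefore to be understood modulo these orbits, equivalently in terms of the Kronecker product $\Psi_2\otimes\Psi_1$ that the MLE actually parameterizes, or concretely by restricting to the cross-section $\{\det\Psi_2=1\}$. With this understood, one direction is immediate from $\ell(\Psi_1,\Psi_2)\le\ell_\text{prof}(\Psi_2)=-n\,g(\Psi_2)+(\text{const})$, with equality along the graph $\Psi_1=\Psi_1(\Psi_2)$: restricting a divergent sequence to that graph, whose $\Psi_2$-component still leaves every compact subset, shows ``$\ell$ coercive $\Rightarrow$ $g$ coercive.'' For the converse, given a sequence leaving every compact subset in the scale-aware sense, normalize so that $\det\Psi_2^{(t)}=1$ and split into two cases. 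If $\Psi_2^{(t)}$ approaches $\partial\,\mathit{PD}(m_2)$, the displayed bound and coercivity of $g$ force $\ell\to-\infty$. If instead $\Psi_2^{(t)}$ stays in a compact $K\subset\mathit{PD}(m_2)$, then $\Psi_1^{(t)}$ must diverge or approach $\partial\,\mathit{PD}(m_1)$, and since $\sum_i Y_i\Psi_2 Y_i^T$ is uniformly positive definite over $\Psi_2\in K$, the section $\ell_{\Psi_2}(\Psi_1)=nm_2\log\det\Psi_1-\tr\bigl(\Psi_1\sum_i Y_i\Psi_2 Y_i^T\bigr)+(\text{const})$ is coercive in $\Psi_1$ uniformly over $\Psi_2\in K$ --- the classical coercivity of the Gaussian covariance log-likelihood, the same fact underlying Lemma~\ref{lem:profile} --- again forcing $\ell\to-\infty$.

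The step I expect to be the main obstacle is the bookkeeping around the scale degeneracy in the uniqueness part: pinning down that ``diverges or approaches the boundary'' is the scale-aware notion forced by~\eqref{eq:scaling}, and that this notion transports cleanly between the $(\Psi_1,\Psi_2)$-picture and the $\Psi_2$-picture for $g$ --- i.e., that the Kronecker map descends to a proper map on the quotient by scaling, equivalently that the cross-section $\{\det\Psi_2=1\}$ is adequate. Everything else is the single determinant--trace identity above and standard facts about the unrestricted Gaussian covariance MLE.
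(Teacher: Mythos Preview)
Your proposal is correct and follows the same two-step route the paper takes: the first assertion via the relation $\ell_\text{prof}=-n\,g+\text{const}$, and the second via Lemma~\ref{lem:unique}. The paper's own proof is only two sentences (``clear from the definition of $g$'' and ``follows from Lemma~\ref{lem:unique}''), so you are supplying the details it omits rather than arguing differently.

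Where you go beyond the paper is in the uniqueness part: you correctly flag that the literal definition of coercivity given before Lemma~\ref{lem:unique} cannot hold for $\ell$ (nor for $g$) because of the scaling orbits~\eqref{eq:scaling}, and you repair this by passing to the cross-section $\{\det\Psi_2=1\}$ and giving the two-case argument (either $\Psi_2^{(t)}$ escapes compacta, or it stays compact and $\Psi_1^{(t)}$ escapes, with uniform section-coercivity handling the latter). The paper silently interprets coercivity at the level of the Kronecker product $\Psi_2\otimes\Psi_1$; your explicit treatment is a genuine clarification and the place you identified as ``the main obstacle'' is exactly where the paper is imprecise.
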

\begin{proof}
  The first assertion of this lemma is clear from the definition of
  $g$, and the second claim follows from Lemma~\ref{lem:unique}.
\end{proof}

\subsection{Group action}
\label{sec:group-action}

An important ingredient to our later analysis is the fact that a group
action allows one to consider data in canonical position.  Let
$\mathrm{GL}(m,\mathbb{R})$ be the general linear group of $m\times m$
real invertible matrices.  The direct product
$\mathrm{GL}(m_1,\mathbb{R})\times\mathrm{GL}(m_2,\mathbb{R})$ acts
naturally on a data set comprised of matrices
$Y_1,\dots,Y_n\in\mathbb{R}^{m_1\times m_2}$.  For
$A\in\mathrm{GL}(m_1,\mathbb{R})$ and
$B\in\mathrm{GL}(m_2,\mathbb{R})$, the action is
\begin{align*}
  Y_i \mapsto AY_iB, \quad i=1,\dots,n.
\end{align*}
Now two data sets $(Y_1,\dots,Y_n)$ and $(Y_1',\dots,Y_n')$ are in the same orbit under the group action if one can be transformed into the other using a pair $(A,B)\in\mathrm{GL}(m_1,\mathbb{R})\times\mathrm{GL}(m_2,\mathbb{R})$.

Recall that the log-likelihood surface of a model is the graph of its
log-likelihood function.

\begin{lemma}
  \label{lem:group-action}
  If two data sets are in the same
  $\mathrm{GL}(m_1,\mathbb{R})\times\mathrm{GL}(m_2,\mathbb{R})$-orbit,
  then their log-likelihood surfaces are translations of one another.
\end{lemma}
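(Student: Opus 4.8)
The plan is to reduce the lemma to a single substitution identity. Write the two data sets as $(Y_1,\dots,Y_n)$ and $(Y_1',\dots,Y_n')$ with $Y_i'=AY_iB$ for a fixed pair $(A,B)\in\mathrm{GL}(m_1,\mathbb{R})\times\mathrm{GL}(m_2,\mathbb{R})$, and let $\ell$ and $\ell'$ denote the corresponding log-likelihood functions from~\eqref{eq:log-likelihood}. First I would prove that
\[
\ell'(\Psi_1,\Psi_2)\;=\;\ell\bigl(A^{T}\Psi_1A,\;B\Psi_2B^{T}\bigr)+c\qquad\text{for all }(\Psi_1,\Psi_2)\in\mathit{PD}(m_1)\times\mathit{PD}(m_2),
\]
where $c=-2nm_2\log|\det A|-2nm_1\log|\det B|$ depends only on $(A,B)$ and $(m_1,m_2,n)$.

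Establishing this identity is the routine core of the argument. For the trace term of $\ell'$, substitute $Y_i'=AY_iB$, pull $A$ and $A^{T}$ out of the sum over $i$, and cycle them under the trace to obtain $\tr\bigl(A^{T}\Psi_1 A\,\sum_i Y_i(B\Psi_2B^{T})Y_i^{T}\bigr)$, which is exactly the trace term of $\ell$ evaluated at $(A^{T}\Psi_1A,\,B\Psi_2B^{T})$. For the two determinant terms, multiplicativity of $\det$ gives $\log\det(A^{T}\Psi_1A)=\log\det\Psi_1+2\log|\det A|$ and $\log\det(B\Psi_2B^{T})=\log\det\Psi_2+2\log|\det B|$; absorbing these constants into $c$ yields the displayed identity.

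It then remains to read off the geometric conclusion. The map $T\colon(\Psi_1,\Psi_2)\mapsto(A^{T}\Psi_1A,\,B\Psi_2B^{T})$ is a diffeomorphism of $\mathit{PD}(m_1)\times\mathit{PD}(m_2)$, with inverse $(\Psi_1,\Psi_2)\mapsto(A^{-T}\Psi_1A^{-1},\,B^{-1}\Psi_2B^{-T})$. Since the action $Q\mapsto C^{T}QC$ of $\mathrm{GL}$ on $\mathit{PD}$ carries the geodesics in~\eqref{eq:geodesic} to geodesics of the same form, $T$ is an isometry of $\mathit{PD}(m_1)\times\mathit{PD}(m_2)$ for the geodesic structure used throughout; in particular $T$ and $T^{-1}$ send divergent sequences to divergent sequences and boundary-approaching sequences to boundary-approaching sequences. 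From $\ell'=\ell\circ T+c$ one concludes that the graph of $\ell'$ is obtained from that of $\ell$ by applying $T^{-1}$ in the base coordinates and shifting the function value by $-c$, i.e., the two log-likelihood surfaces are translates of one another. This is also the form in which the lemma is used later: boundedness above, attainment of the maximum, and coercivity are invariant under such a move, hence hold for one data set if and only if they hold for the other.

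The only point requiring a little care — and the one I would flag as the main obstacle — is the claim that $Q\mapsto C^{T}QC$ maps the geodesics~\eqref{eq:geodesic} to geodesics, i.e., $C^{T}\gamma^{(j)}_t(Q_0,Q_1)C=\gamma^{(j)}_t(C^{T}Q_0C,\,C^{T}Q_1C)$. This is the standard affine invariance of the matrix geometric-mean/operator-power expression, and it takes a genuine manipulation of the factor $C^{T}\bigl(Q_0^{-1/2}Q_1Q_0^{-1/2}\bigr)^{t}C$ rather than a naive factorization; everything else — the trace cycling and the determinant bookkeeping — is elementary. If one only wants the consequences actually invoked in the paper, this geodesic fact can be bypassed: it suffices that $T$ is a homeomorphism of $\mathit{PD}(m_1)\times\mathit{PD}(m_2)$ with the stated behavior on divergent and boundary-approaching sequences, which is immediate from the invertibility of $A$ and $B$.
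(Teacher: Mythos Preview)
Your proof is correct and follows essentially the same approach as the paper's own proof: establish the identity $\ell'(\Psi_1,\Psi_2)=\ell(A^{T}\Psi_1A,\,B\Psi_2B^{T})+c$ via trace cycling and determinant multiplicativity, then observe that $(\Psi_1,\Psi_2)\mapsto(A^{T}\Psi_1A,\,B\Psi_2B^{T})$ is a bijection of $\mathit{PD}(m_1)\times\mathit{PD}(m_2)$. The paper stops there, without any discussion of geodesics or isometries; as you yourself note, that extra material is not needed for the lemma or for how it is used, so you could safely omit it.
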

\begin{proof}
  Let $\ell$ be the log-likelihood function
  from~(\ref{eq:log-likelihood}) for data
  $Y_1,\dots,Y_n\in\mathbb{R}^{m_1\times m_2}$. Let
  $A\in\mathrm{GL}(m_1,\mathbb{R})$ and
  $B\in\mathrm{GL}(m_2,\mathbb{R})$.  Define $Y_i'=AY_iB$ for
  $i=1,\dots,n$.  The log-likelihood function for the data
  $(Y_1',\dots,Y_n')$ is
  \begin{align*}
    \ell'(\Psi_1,\Psi_2)
    &= n m_2\log\det(\Psi_1) + n m_1\log\det(\Psi_2) -
      \tr\bigg(\Psi_1A \sum_{i=1}^n
      Y_iB\Psi_2B^TY_i^TA^T\bigg)\\
    &= \ell\bigl(A^T\Psi_1A,B\Psi_2B^T\bigr) + c,
  \end{align*}
  where $c$ is a constant that depends on $n$, $m_1$, $m_2$, $\det(A)$
  and $\det(B)$.  The maps $\Psi_1\mapsto A^T\Psi_1A$ and
  $\Psi_2\mapsto B\Psi_2 B^T$ are bijections from $\mathit{PD}(m_1)$
  to $\mathit{PD}(m_1)$ and $\mathit{PD}(m_2)$ to $\mathit{PD}(m_2)$,
  respectively.  Therefore, subtracting $c$ from each function value
  translates the graph of $\ell'$ into the graph of $\ell$.
\end{proof}

\section{Algebraic conditions for existence and uniqueness}
\label{sec:algebr-cond-exist}

In this section we prove a necessary condition for existence of the
Kronecker MLE as well as a sufficient condition for its unique
existence.  Both conditions involve the rank of 
\begin{equation}
  \label{eq:def-Y}
  Y=(Y_1,\dots,Y_n)\in\mathbb{R}^{m_1\times n m_2}
\end{equation}
after linear transformation using certain
$nm_2\times nm_2$ matrices in Kronecker product form.
  
\subsection{Necessary condition for existence}
\label{sec:necessary-rank}

The following condition strengthens the necessary condition from
Lemma~\ref{lem:Y-full-row-rank}.  
\begin{theorem}
  \label{thm:necessary-rank}
  (i) If there exists a matrix $X\in\mathbb{R}^{m_2\times m_2}$ such
  that
  \[
    \rk
    \begin{pmatrix}
      Y_1X,\dots,Y_nX
    \end{pmatrix}
    <\frac{m_1}{m_2}\rk(X),
  \]
  then the log-likelihood function $\ell$ is unbounded and
  the Kronecker MLE does not exist.\\[0.1cm]
  (ii) If there exists a non-zero and singular matrix
  $X\in\mathbb{R}^{m_2\times m_2}$ such that
  \[
    \rk
    \begin{pmatrix}
      Y_1X,\dots,Y_nX
    \end{pmatrix}
    \le\frac{m_1}{m_2}\rk(X),
  \]
  then the log-likelihood function $\ell$ is not coercive and the
  Kronecker MLE does not exist uniquely.
\end{theorem}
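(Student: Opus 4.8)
The plan is to prove both parts by exhibiting, from the matrix $X$, an explicit anisotropic one-parameter family of precision matrices along which the two ingredients of the log-likelihood~\eqref{eq:log-likelihood} decouple into a $\log\det$ term that is linear in the parameter and a trace term that stays bounded. Write $r=\rk(X)$, let $U=\mathrm{col}(X)\subseteq\mathbb{R}^{m_2}$, and let $W=\mathrm{col}(Y_1X,\dots,Y_nX)=\sum_{i=1}^nY_i(U)\subseteq\mathbb{R}^{m_1}$, so that $\dim U=r$ and $\dim W=R:=\rk(Y_1X,\dots,Y_nX)$. Let $Q$ and $P$ be the orthogonal projectors onto $U$ and $W$, respectively, and set $Q^\perp=I-Q$, $P^\perp=I-P$. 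For $a>0$ I would consider
\[
  \Psi_1(a)=aP+P^\perp\in\mathit{PD}(m_1),\qquad
  \Psi_2(a)=a^{-1}Q+Q^\perp\in\mathit{PD}(m_2),
\]
which, parametrized by $\log a$, traces out the geodesic line $\log a\mapsto\bigl(\exp((\log a)P),\,\exp(-(\log a)Q)\bigr)$ through $(I,I)$.

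The key point is that $\mathrm{col}\bigl(\sum_iY_iQY_i^T\bigr)\subseteq W$, since each $Y_iQ$ has columns lying in $Y_i(U)\subseteq W$. Writing $S=\sum_iY_iY_i^T$ and $T=\sum_iY_iQY_i^T$ and using $\Psi_2(a)=I+(a^{-1}-1)Q$, one gets $\sum_iY_i\Psi_2(a)Y_i^T=S+(a^{-1}-1)T$; then $P^\perp T=0$ and $PT=T$ give
\[
  \tr\Bigl(\Psi_1(a)\sum_{i=1}^n Y_i\Psi_2(a)Y_i^T\Bigr)
  = a\bigl(\tr(PS)-\tr(T)\bigr)+\tr(P^\perp S)+\tr(T),
\]
which tends, as $a\to0^+$, to the finite constant $\tr(P^\perp S)+\tr(T)$. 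Since $\det\Psi_1(a)=a^R$ and $\det\Psi_2(a)=a^{-r}$, substituting into~\eqref{eq:log-likelihood} yields
\[
  \ell\bigl(\Psi_1(a),\Psi_2(a)\bigr)
  = n\,(m_2R-m_1r)\log a - \bigl[a\bigl(\tr(PS)-\tr(T)\bigr)+\tr(P^\perp S)+\tr(T)\bigr].
\]
For part~(i), the hypothesis $m_2R<m_1r$ makes the coefficient of $\log a$ negative, so letting $a\to0^+$ forces $\ell\to+\infty$; hence $\ell$ is unbounded above and the Kronecker MLE does not exist. For part~(ii): if $m_2R<m_1r$ we are in the situation of part~(i); if $m_2R=m_1r$, the expression above reduces to $-\bigl[a(\tr(PS)-\tr(T))+\tr(P^\perp S)+\tr(T)\bigr]$, which stays bounded as $a\to0^+$. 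Because $X$ is non-zero and singular, $1\le r\le m_2-1$, and the equality $m_2R=m_1r$ then forces $1\le R\le m_1-1$, so $\Psi_1(a)\to P^\perp$ approaches the boundary of $\mathit{PD}(m_1)$ while $\Psi_2(a)$ diverges in $\mathit{PD}(m_2)$; along this degenerating family $\ell$ does not tend to $-\infty$, so $\ell$ is not coercive and, by Lemma~\ref{lem:unique}, the Kronecker MLE does not exist uniquely. Equivalently, $\ell(\Psi_1(a),\Psi_2(a))\le\ell_\text{prof}(\Psi_2(a))$ by Lemma~\ref{lem:profile}, so the function $g$ of~\eqref{eq:profile-neglik} is bounded above along the divergent sequence $\Psi_2(a)$ and Lemma~\ref{lem:profile-likelihood} applies.

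The only step requiring any care is the inclusion $\mathrm{col}(\sum_iY_iQY_i^T)\subseteq W$ together with the ensuing cancellations that keep the trace term bounded; everything else is bookkeeping with determinants and traces. Conceptually, the content is that the rank inequality in the hypothesis is precisely the threshold at which the $\log\det$ contribution of the family $(\Psi_1(a),\Psi_2(a))$ overtakes (part~(i)) or exactly balances (part~(ii)) the bounded trace contribution, so the real work is guessing this family and the reciprocal rates $a,a^{-1}$ on the two factors. One could instead first normalize $X$ via the group action of Lemma~\ref{lem:group-action}, since the hypothesis is invariant under $(Y_i,X)\mapsto(AY_iB,\,B^{-1}X)$ while the likelihood surface merely translates; but the direct construction above makes this unnecessary.
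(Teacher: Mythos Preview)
Your argument is correct, and it takes a genuinely different route from the paper's own proof. The paper works exclusively with the profile function $g$ of~\eqref{eq:profile-neglik}: it considers the one-parameter family $\lambda XX^T+\Psi$ in $\mathit{PD}(m_2)$ (with $\Psi$ a complementary projector), and then shows via Lemma~\ref{lem:degree-det} that $\det\bigl(Y[I_n\otimes(\lambda XX^T+\Psi)]Y^T\bigr)$ and $\det(\lambda XX^T+\Psi)$ are polynomials in $\lambda$ of degrees $R$ and $r$ respectively, so that $e^{g}$ is a rational function whose numerator degree $m_2R$ is strictly less than (or equal to) its denominator degree $m_1r$. The invertible-$X$ case is handled separately there via Lemma~\ref{lem:Y-full-row-rank}, since for $k=m_2$ the complementary $\Psi$ vanishes and the family collapses to a pure rescaling.

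Your construction instead acts on both factors at once with reciprocal rates, and replaces the polynomial-degree bookkeeping by the single geometric observation that the column space of $T=\sum_iY_iQY_i^T$ lies in $W$, which makes the trace term affine in $a$ and hence bounded as $a\to 0^+$. This buys you two things: you avoid the profile machinery and the auxiliary Lemma~\ref{lem:degree-det} entirely, and you treat invertible $X$ uniformly (when $Q=I$ one has $S=T$, $PS=S$, $P^\perp S=0$, so the trace is simply $\tr(S)$). The paper's route, on the other hand, keeps everything inside the single-variable function $g$, which is the object also used for the matching sufficient condition (Theorem~\ref{thm:sufficient-rank}); so its proof integrates more tightly with the rest of Section~\ref{sec:algebr-cond-exist}. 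One small remark on your closing sentence: the alternative phrasing via $\ell_{\text{prof}}$ and $g$ implicitly relies on Lemma~\ref{lem:profile}, which assumes $nm_2\ge m_1$ and generic data; your primary argument through Lemma~\ref{lem:unique} does not need this, so it is the more robust of the two.
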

\begin{proof}
  (i) Let $X\in\mathbb{R}^{m_2\times m_2}$ satisfy the assumed rank
  condition.  If $X$ is invertible, then the data matrix $Y$
  from~(\ref{eq:def-Y}) has $\rk(Y)<m_1$ and the likelihood function
  is unbounded by Lemma~\ref{lem:Y-full-row-rank}.  Hence, only the
  case where $\rk(X)=k<m_1$ needs to be considered.
  
  Let $q_1,\dots,q_{m_2}$ be the eigenvectors of $XX^T$, with
  corresponding eigenvalues $d_1,\dots,d_{m_2}$.  Since
  $\rk(XX^T)=\rk(X)=k$, we may assume that
  $d_j=0$ for $j\ge k+1$, in which case
  \begin{align*}
    XX^T &= \sum_{j=1}^k d_j q_jq_j^T.
  \end{align*}
  Define the positive semidefinite matrix
  \begin{align*}
    \Psi &= \sum_{j=k+1}^{m_2}  q_jq_j^T.
  \end{align*}
  Then $XX^T+\Psi$ is positive definite.
  We claim that 
  \begin{equation}
    \label{eq:lim-infinity}
    \lim_{\lambda\to\infty} g(\lambda XX^T+\Psi) \;=\; -\infty,
  \end{equation}
  which implies the theorem's assertion via
  Lemma~\ref{lem:profile-likelihood}.

  Let $r$ be the rank of 
  \[
    (Y_1X,\dots,Y_nX) \;=\; Y (I_n \otimes X).
  \]
  Then $r$ is also the rank of 
  \[
    Y (I_n \otimes X)(I_n \otimes X)^T Y^T \;=\;
    Y (I_n \otimes XX^T) Y^T.
  \]
  By Lemma~\ref{lem:degree-det} below, the determinant of 
  \begin{align*}
    Y[I_n\otimes (\lambda XX^T+\Psi)]Y^T &= 
    \lambda\cdot Y[I_n\otimes XX^T]Y^T +  Y[I_n\otimes\Psi]Y^T
  \end{align*}
  is a polynomial of degree $r$ in $\lambda$.
  Lemma~\ref{lem:degree-det} also yields that
  $\det(\lambda XX^T+\Psi)$ is a polynomial of degree $k$ in
  $\lambda$.  By assumption $rm_2<km_1$.  Therefore, 
  \begin{equation}
    \label{eq:ratio-sequence}
    \lim_{\lambda\to\infty} e^{g(\lambda XX^T+\Psi)}=
    \lim_{\lambda\to\infty} \frac{\left\{\det\left(Y \left[I_n
          \otimes (\lambda XX^T+\Psi)\right] Y^T\right)\right\}^{m_2}}{\left\{\det(\lambda
        XX^T+\Psi)\right\}^{m_1}} = 0.
  \end{equation}
  Taking the logarithm yields the claim from~(\ref{eq:lim-infinity}).\\[0.1cm]
  (ii) Proceeding as in case (i) we obtain
  in~(\ref{eq:ratio-sequence}) a ratio of two polynomials of equal
  degree and a finite and positive limit.  It follows that $g$
  converges to a finite limit and, thus, is not coercive.  An
  application of Lemma~\ref{lem:profile-likelihood} yields the claim.
\end{proof}

\begin{lemma}
  \label{lem:degree-det}
  Let $A,B\in\mathbb{R}^{m\times m}$ be two positive semidefinite
  matrices whose sum $A+B$ is positive definite.  Let $\rk(A)=r$.
  Then $\det(\gamma A+B)$ is a degree $r$ polynomial in $\gamma$.
\end{lemma}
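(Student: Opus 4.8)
The plan is to simultaneously diagonalize the two quadratic forms $A$ and $B$, exploiting the hypothesis that $A+B$ is positive definite. Concretely, I would first factor $A+B = LL^T$ with $L$ invertible (e.g.\ via Cholesky), and then orthogonally diagonalize the symmetric positive semidefinite matrix $L^{-1}AL^{-T} = QDQ^T$, where $Q$ is orthogonal and $D = \diag(d_1,\dots,d_m)$ collects the eigenvalues. Setting $P = L^{-T}Q$ yields an invertible matrix with $P^TAP = D$ and $P^T(A+B)P = I$, hence $P^TBP = I - D$. Since $B$ is positive semidefinite, each $d_i$ lies in $[0,1]$; and since $P$ is invertible, the number of nonzero $d_i$ equals $\rk(D) = \rk(P^TAP) = \rk(A) = r$.

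Next I would compute $\det(\gamma A + B)$ directly from this normal form. We have
\[
P^T(\gamma A + B)P = \gamma D + (I-D) = \diag\bigl(1+(\gamma-1)d_1,\ \dots,\ 1+(\gamma-1)d_m\bigr),
\]
so that
\[
\det(\gamma A + B) = \frac{1}{\det(P)^2}\prod_{i=1}^m\bigl(1+(\gamma-1)d_i\bigr) = \frac{1}{\det(P)^2}\prod_{i:\,d_i>0}\bigl(d_i\gamma + 1 - d_i\bigr).
\]
Each factor with $d_i>0$ is a degree-one polynomial in $\gamma$ with leading coefficient $d_i \neq 0$, while each factor with $d_i = 0$ equals $1$. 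Hence the product is a polynomial in $\gamma$ of degree exactly the number of positive $d_i$, namely $r$, with nonzero leading coefficient $\det(P)^{-2}\prod_{i:\,d_i>0} d_i$; dividing by the nonzero constant $\det(P)^2$ preserves the degree, which gives the claim.

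I do not expect any real obstacle here. The only points needing a little care are the justification of the simultaneous diagonalization of a pair of symmetric forms when one of them is definite (a standard fact) and the observation that $\rk$ is invariant under the congruence $A \mapsto P^TAP$ with $P$ invertible; the rest is a routine determinant computation that I would keep brief.
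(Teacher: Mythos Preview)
Your proposal is correct and is essentially the same argument as the paper's: both factor $A+B$ as a Gram matrix, orthogonally diagonalize the congruence-transform of $A$, and read off $\det(\gamma A+B)$ as a nonzero constant times $\prod_{j:\,d_j>0}(d_j\gamma+1-d_j)$. The only cosmetic differences are your use of $LL^T$ versus the paper's $C^TC$ and your explicit introduction of $P=L^{-T}Q$; the substance is identical.
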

\begin{proof}
  Choose an invertible matrix $C$ such
  that $C^TC=A+B$.  Let $Q^TDQ$ be the spectral decomposition of
  $C^{-T}AC^{-1}$ with $D=\diag(d_1,\dots,d_r,0,\dots,0)$ and $d_j>0$
  for $j\le r$.  Then
  \begin{align*}
    \det(\gamma A+B)&=\det\big((\gamma-1)A+(A+B)\big)\\
                    &= \det(C)^2
                      \det\big((\gamma-1)D+I\big) = \det(A+B)^2\prod_{j=1}^r (d_j\gamma
                      +1-d_j).
  \end{align*}
\end{proof}

\begin{remark}
  The eigenvalues $d_j$ in the above proof are also the eigenvalues of
  $(A+B)^{-1}A$.  If $v\in\mathrm{ker}(B)$ then 
  \[
    (A+B)^{-1}Av = (A+B)^{-1}(A+B)v = v,
  \]
  so that $v$ is an eigenvector for eigenvalue 1.
\end{remark}

\subsection{Sufficient conditions for existence and uniqueness}
\label{sec:sufficient-rank}

\begin{theorem}
  \label{thm:sufficient-rank}
  Let $\rk(Y_1,\dots,Y_n)=m_1$.  
  \\
  (i) If all singular matrices $X\in\mathbb{R}^{m_2\times m_2}$ satisfy
  \[
    \rk
    \begin{pmatrix}
      Y_1X,\dots,Y_nX
    \end{pmatrix}
    \ge \frac{m_1}{m_2}\rk(X),
  \]
  then the log-likelihood function $\ell$ is bounded from above.\\[0.1cm]
  (ii)  If all non-zero singular matrices $X\in\mathbb{R}^{m_2\times
    m_2}$ satisfy 
  \[
    \rk
    \begin{pmatrix}
      Y_1X,\dots,Y_nX
    \end{pmatrix}
    >\frac{m_1}{m_2}\rk(X),
  \]
  then the log-likelihood function $\ell$ is coercive and the
  Kronecker MLE exists uniquely.
\end{theorem}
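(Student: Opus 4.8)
The plan is to work with the profile function $g$ from~\eqref{eq:profile-neglik}: by the discussion around that display and by Lemma~\ref{lem:profile-likelihood}, part~(i) amounts to showing that $g$ is bounded below on $\mathit{PD}(m_2)$ and part~(ii) to showing that $g$ is coercive. Since $\rk(Y_1,\dots,Y_n)=m_1$, the matrix $Y[I_n\otimes\Psi]Y^T$ is positive definite for every $\Psi\in\mathit{PD}(m_2)$, so $g$ is real-valued and continuous; moreover $g(c\Psi)=g(\Psi)$, so when testing coercivity we may restrict attention to the unit-determinant slice, i.e.\ normalize $\det\Psi^{(t)}=1$. It therefore suffices to estimate $g(\Psi^{(t)})$ from below along an arbitrary sequence $\Psi^{(t)}$ that leaves every compact subset of $\mathit{PD}(m_2)$ (and, for~(ii), satisfies $\det\Psi^{(t)}=1$).

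For the estimate I would diagonalize $\Psi=UDU^T$ with $U\in O(m_2)$ and $D=\diag(\mu_1,\dots,\mu_{m_2})$, factor
\[
 Y[I_n\otimes\Psi]Y^T=\bigl(\hat Y[I_n\otimes D^{1/2}]\bigr)\bigl(\hat Y[I_n\otimes D^{1/2}]\bigr)^T,\qquad \hat Y:=(Y_1U,\dots,Y_nU),
\]
and expand the determinant by the Cauchy--Binet formula. Since the column of $\hat Y[I_n\otimes D^{1/2}]$ indexed by the pair $(i,j)\in\{1,\dots,n\}\times\{1,\dots,m_2\}$ equals $\mu_j^{1/2}\,Y_iu_j$, this yields
\[
 \det\bigl(Y[I_n\otimes\Psi]Y^T\bigr)=\sum_{\vec k\in K}C_{\vec k}(U)\prod_{j=1}^{m_2}\mu_j^{\,k_j},
\]
where $K=\{\vec k\in\{0,\dots,n\}^{m_2}:\sum_j k_j=m_1\}$ and each coefficient $C_{\vec k}(U)\ge 0$ is a sum of squared $m_1\times m_1$ minors of $\hat Y$, hence continuous in $U$. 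Because a single nonvanishing term already bounds the determinant from below, it remains to produce one $\vec k$ with $C_{\vec k}(U)>0$ whose mass is concentrated on the coordinates of the large eigenvalues.

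Here I would use a greedy construction: with the eigenframe $U=[u_1\mid\dots\mid u_{m_2}]$ fixed, build a basis of $\mathbb R^{m_1}$ by running through the blocks $\{Y_1u_j,\dots,Y_nu_j\}$ in the order $j=1,\dots,m_2$ and, at each block, adjoining a maximal subset of its vectors that keeps the current set linearly independent. Since $\rk(Y_1,\dots,Y_n)=m_1$ this terminates at a basis; if $k^*_j$ vectors are taken from block $j$, then $\vec k^*\in K$, $C_{\vec k^*}(U)>0$, and for every $l$ the prefix sum satisfies $\sum_{j\le l}k^*_j=\rk(Y_1X_l,\dots,Y_nX_l)=:R_l$, where $X_l=\sum_{j\le l}u_ju_j^T$ is the orthogonal projection of rank $l$ (with $X_{m_2}=I$ and $R_{m_2}=m_1$). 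Passing to a subsequence along which $U^{(t)}\to U$, continuity of $C_{\vec k^*}$ gives $C_{\vec k^*}(U^{(t)})\ge c>0$ eventually, so with the eigenvalues $\mu_1^{(t)}\ge\dots\ge\mu_{m_2}^{(t)}$ of $\Psi^{(t)}$ sorted and $\delta_l^{(t)}:=\log\mu_l^{(t)}-\log\mu_{l+1}^{(t)}\ge 0$, Abel summation gives
\[
 g(\Psi^{(t)})\;\ge\; m_2\sum_{j=1}^{m_2}k^*_j\log\mu_j^{(t)}-m_1\sum_{j=1}^{m_2}\log\mu_j^{(t)}+O(1)\;=\;\sum_{l=1}^{m_2-1}\delta_l^{(t)}\bigl(m_2R_l-m_1l\bigr)+O(1);
\]
the boundary term in the summation by parts vanishes because $\sum_j(m_2k^*_j-m_1)=m_2m_1-m_1m_2=0$.

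Finally I would apply the hypotheses to the nonzero singular matrices $X_l$, $l=1,\dots,m_2-1$. Under the assumption of part~(i), $R_l\ge\frac{m_1}{m_2}l$, hence $m_2R_l-m_1l\ge 0$ and $g(\Psi^{(t)})\ge O(1)$; since any sequence along which $g$ diverges to $-\infty$ must leave every compact set, $g$ is bounded below and $\ell$ is bounded above. Under the assumption of part~(ii), $R_l>\frac{m_1}{m_2}l$, so the integer $m_2R_l-m_1l$ is at least $1$ for every $l<m_2$, whence $g(\Psi^{(t)})\ge\sum_{l<m_2}\delta_l^{(t)}+O(1)=\log\bigl(\mu_1^{(t)}/\mu_{m_2}^{(t)}\bigr)+O(1)$, which tends to $+\infty$ because the normalization $\det\Psi^{(t)}=1$ together with $\Psi^{(t)}$ leaving every compact set forces the condition number $\mu_1^{(t)}/\mu_{m_2}^{(t)}$ to diverge. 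Thus $g$ is coercive and, by Lemma~\ref{lem:profile-likelihood}, the Kronecker MLE exists uniquely. The one substantive step is the combinatorial bridge of the middle two paragraphs --- namely exhibiting, through the greedy basis construction, a feasible exponent vector $\vec k^*$ whose prefix sums are exactly the ranks $R_l=\rk(Y_1X_l,\dots,Y_nX_l)$ appearing in the hypotheses --- after which everything reduces to the Abel-summation rearrangement together with elementary subsequence and continuity arguments.
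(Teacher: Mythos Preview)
Your argument is correct and reaches the same conclusion as the paper, but by a genuinely different route. The paper proceeds by a multi-scale decomposition of the eigenvalues: after diagonalizing and passing to a subsequence, it invokes Lemma~\ref{lem:classify} to write $\Psi^{(t)}$ as a sum $\sum_k \epsilon_k^{(t)}\Psi_k^{(t)}$ with $\epsilon_{k+1}^{(t)}/\epsilon_k^{(t)}\to 0$, then telescopes so that the images are nested, and finally uses Lemma~\ref{lem:degree-det} to read off the leading order of both determinants in $g$ as products of the scale ratios $\gamma_k^{(t)}=\epsilon_k^{(t)}/\epsilon_{k+1}^{(t)}$ raised to rank exponents; the hypothesis then forces all exponents in the lower bound $\prod_k(\gamma_k^{(t)})^{m_2R(r_k)-m_1r_k}$ to be nonnegative (resp.\ positive). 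Your proof bypasses both auxiliary lemmas: Cauchy--Binet gives an exact expansion of $\det\bigl(Y[I_n\otimes\Psi]Y^T\bigr)$ as a nonnegative combination of monomials $\prod_j\mu_j^{k_j}$, the greedy basis picks out a single monomial with $C_{\vec k^*}(U)>0$ whose prefix sums are exactly the ranks $R_l$, and Abel summation converts the comparison with $m_1\sum_j\log\mu_j$ into $\sum_{l<m_2}\delta_l^{(t)}(m_2R_l-m_1l)$.

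What each buys: the paper's multi-scale grouping is conceptually natural for generalizations (several scales may collapse together), but requires the somewhat technical Lemma~\ref{lem:classify}. Your approach is more elementary and self-contained; the only analytic input is continuity of the minor-sum coefficients $C_{\vec k}(U)$ in $U$, and the combinatorics is confined to the greedy step, which transparently explains why projections $X_l$ onto the leading eigendirections are the ``right'' singular matrices to test. One small point worth making explicit in your write-up: the subsequence argument only shows $g\to+\infty$ along a subsequence, so for part~(ii) you should phrase the coercivity conclusion via the contrapositive (if $g$ were not coercive, some sequence leaving compacta would have $g$ bounded above, and applying your estimate to a further subsequence with $U^{(t)}\to U$ yields a contradiction). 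This is routine, but the sentence ``which tends to $+\infty$'' as written refers to the subsequence rather than the original sequence.
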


\begin{proof}
(i) Assume that the log-likelihood function $\ell$ is not bounded from
above.  Then there exists a sequence $\Psi^{(t)}$, $t=1,2,\ldots$, in $\mathit{PD}(m_2)$ such that $g\bigl(\Psi^{(t)}\bigr)\to -\infty$ as $t\to\infty$.
Let $\Psi^{(t)}=Q^{(t)} \Lambda^{(t)} \bigl(Q^{(t)}\bigr)^T$ be the spectral decomposition.
The set of orthogonal matrices is compact and, passing to a subsequence
if necessary,
we may assume the sequence $Q^{(t)}$ to be convergent.
By Lemma~\ref{lem:classify} in the appendix, again passing to a
subsequence if necessary, we may assume the diagonal elements of
$\Lambda^{(t)}$ to be such that  the resulting sequence $\Psi^{(t)}$ is of the form
\begin{equation}
\label{eq:Psi}
 \Psi^{(t)} = \epsilon_1^{(t)} \Psi_1^{(t)} + \epsilon_2^{(t)} \Psi_2^{(t)} + \cdots + \epsilon_K^{(t)} \Psi_K^{(t)}, \quad K\le m_2,
\end{equation}
where $\Psi_k^{(t)}$ is a sequence of positive semidefinite matrices
that converges to a limit 
$\Psi_k$ and such that for all $t$ it holds that $\rk\Psi_k^{(t)}=\rk\Psi_k$ and
\begin{equation}
\label{eq:directsum}
 \mathrm{Im}\,\Psi_1^{(t)}\oplus\cdots\oplus\mathrm{Im}\,\Psi_K^{(t)}=
 \mathbb{R}^{m_2}.
\end{equation}
Moreover, the scalars in \eqref{eq:Psi} are positive,
$\epsilon_k^{(t)}>0$, and satisfy $\epsilon_{k+1}^{(t)}/\epsilon_{k}^{(t)}\to 0$.
Note also that in~\eqref{eq:Psi} we must have $K\ge 2$ because if $K=1$, then $\lim_{t\to\infty}\Psi_1^{(t)}=\Psi_1\in \mathit{PD}(m_2)$ and $\lim_{t\to\infty} g\bigl(\Psi^{(t)}\bigr) = \lim_{t\to\infty} g\bigl(\epsilon_1^{(t)}\Psi_1^{(t)}\bigr) = \lim_{t\to\infty} g\bigl(\Psi_1^{(t)}\bigr) = \lim_{t\to\infty} g(\Psi_1)>-\infty$.

Next, we redefine $\Psi_k^{(t)}$ and $\epsilon_k^{(t)}$ as
\begin{align*}
 & \epsilon_k^{(t)} := \epsilon_k^{(t)}-\epsilon_{k+1}^{(t)},\quad k=1,\ldots, K-1, \quad \epsilon_K^{(t)} := \epsilon_K^{(t)}, \\
 & \Psi_k^{(t)} := \Psi_1^{(t)} + \cdots + \Psi_k^{(t)},\quad
   \Psi_k := \Psi_1 + \cdots + \Psi_k,\quad k=1,\ldots,K.
\end{align*}
The new $\Psi^{(t)}$ remains of the form (\ref{eq:Psi}) with
$\Psi_k^{(t)}\to\Psi_k$, $\rk\Psi_k^{(t)}=\rk\Psi_k$.  Similarly, the new
$\epsilon_k^{(t)}$ remains positive with $\epsilon_{k+1}^{(t)}/\epsilon_{k}^{(t)}\to 0$.
However, instead of (\ref{eq:directsum}), we now have
\begin{equation}
\label{eq:monotone}
 \mathrm{Im}\,\Psi_1^{(t)}\subset\cdots\subset\mathrm{Im}\,\Psi_K^{(t)}=\mathbb{R}^{m_2}.
\end{equation}
Write $r_k=\rk\Psi_k$.
Then, by (\ref{eq:monotone}) and Lemma~\ref{lem:degree-det},
\begin{align*}
 \det\bigl(\Psi^{(t)}\bigr)
=& \det\bigl(\epsilon_1^{(t)}\Psi_1^{(t)} + \epsilon_2^{(t)} \Psi_2^{(t)} + \cdots + \epsilon_K^{(t)} \Psi_K^{(t)}\bigr) \\
=& \bigl(\epsilon_K^{(t)}\bigr)^{m_2} \det\bigl(\bigl(\epsilon_1^{(t)}/\epsilon_K^{(t)}\bigr)\Psi_1^{(t)} + \bigl(\epsilon_2^{(t)}/\epsilon_K^{(t)}\bigr)\Psi_2^{(t)} + \cdots + \Psi_K^{(t)}\bigr) \\
\asymp& \bigl(\epsilon_K^{(t)}\bigr)^{m_2} \bigl(\epsilon_1^{(t)}/\epsilon_K^{(t)}\bigr)^{r_1}
 \bigl(\epsilon_2^{(t)}/\epsilon_K^{(t)}\bigr)^{r_2-r_1} \cdots
 \bigl(\epsilon_{K-1}^{(t)}/\epsilon_K^{(t)}\bigr)^{r_{K-1}-r_{K-2}} \\
=& \bigl(\epsilon_K^{(t)}\bigr)^{m_2} \bigl(\epsilon_1^{(t)}/\epsilon_2^{(t)}\bigr)^{r_1}
 \bigl(\epsilon_2^{(t)}/\epsilon_3^{(t)}\bigr)^{r_2} \cdots
 \bigl(\epsilon_{K-1}^{(t)}/\epsilon_K^{(t)}\bigr)^{r_{K-1}} \\
=& \bigl(\epsilon_K^{(t)}\bigr)^{m_2} \bigl(\gamma_1^{(t)}\bigr)^{r_1} \bigl(\gamma_2^{(t)}\bigr)^{r_2} \cdots \bigl(\gamma_{K-1}^{(t)}\bigr)^{r_{K-1}},
\end{align*}
where $\gamma_k^{(t)}=\epsilon_k^{(t)}/\epsilon_{k+1}^{(t)}$.
Note that $\gamma_k^{(t)}\to\infty$ as $t\to\infty$.

Let $M^{(t)}=\sum_{i=1}^n Y_i\Psi^{(t)} Y_i^T$ and $M_k^{(t)}=\sum_{i=1}^n Y_i\Psi_k^{(t)} Y_i^T$.
Then, as $t\to\infty$, $M_k^{(t)}\to M_k=\sum_{i=1}^n Y_i\Psi_k Y_i^T$.
The monotonicity property (\ref{eq:monotone}) is inherited as
\[
 \mathrm{Im}\,M_1^{(t)}\subset\cdots\subset\mathrm{Im}\,M_K^{(t)}=\mathbb{R}^{m_1},
\]
and therefore
\[
 \det\bigl(M^{(t)}\bigr) \asymp \bigl(\epsilon_K^{(t)}\bigr)^{m_2} \bigl(\gamma_1^{(t)}\bigr)^{\rk(M_1^{(t)})} \bigl(\gamma_2^{(t)}\bigr)^{\rk(M_2^{(t)})} \cdots \bigl(\gamma_{K-1}^{(t)}\bigr)^{\rk(M_{K-1}^{(t)})}.
\]
Define
\[
 R(k) := \min_{X\in\mathbb{R}^{m_2\times m_2},\,\rk X=k}\rk (Y_1 X,\ldots,Y_n X).
\]
The assumption in part (i) of the theorem is that
\begin{equation}
\label{eq:cond1}
 R(r)-(m_1/m_2) r \ge 0, \quad r=1,\ldots,m_2-1.
\end{equation}
Then, the order of $\det(M^{(t)})$ is bounded from below by
\[
 \bigl(\epsilon_K^{(t)}\bigr)^{m_1} \bigl(\gamma_0^{(t)}\bigr)^{R(r_0)} \bigl(\gamma_1^{(t)}\bigr)^{R(r_1)} \cdots \bigl(\gamma_{K-1}^{(t)}\bigr)^{R(r_{K-1})},
\]
and the order of
\[
 e^{g(\Psi^{(t)})} = \frac{\det\bigl(M^{(t)}\bigr)^{m_2}}{\det\bigl(\Psi^{(t)}\bigr)^{m_1}}
\]
is bounded from below by
\begin{equation}
\label{eq:below}
 \bigl(\gamma_0^{(t)}\bigr)^{m_2 R(r_0)-m_1 r_0} \bigl(\gamma_1^{(t)}\bigr)^{m_2 R(r_1)-m_1 r_1} \cdots \bigl(\gamma_{K-1}^{(t)}\bigr)^{m_2 R(r_{K-1})-m_1 r_{K-1}}.
\end{equation}
Under the condition (\ref{eq:cond1}), the product in (\ref{eq:below})
and, thus, also $e^{g(\Psi^{(t)})}$ does not converge to 0.
This means that $g\bigl(\Psi^{(t)}\bigr)$ is bounded from below and cannot
diverge to $-\infty$.
This is a contradiction.

\medskip
(ii) The assumption is now that 
\begin{equation}
\label{eq:cond2}
 R(r)-(m_1/m_2) r > 0, \quad r=1,\ldots,m_2-1.
\end{equation}
Let $\Psi^{(t)}$ be a sequence in $\mathit{PD}(m_2)$ such that
$\Psi^{(t)}\to\Psi^0$, where $\Psi^0$ is singular.  Assume that the
likelihood function $\ell$ is not coercive and $g\bigl(\Psi^{(t)}\bigr)$ is
bounded from above.  As in the proof of (i), we can take a subsequence
of the form (\ref{eq:Psi}) with $\Psi_1=\Psi^0$ and
$\epsilon_1^{(t)}\to 1$.  Under the assumption (\ref{eq:cond2}),
the lower bound (\ref{eq:below}) of $e^{g(\Psi^{(t)})}$ always
diverges to infinity, which is a contradiction. 
\end{proof}

\subsection{Minimal ranks}

Assume, as throughout, that we have data matrices
$Y_1,\dots,Y_n\in\mathbb{R}^{m_1\times m_2}$ with $m_1\ge m_2$.  For
$k=1,\dots, m_2$, define the minimal rank
\begin{equation}
  \label{eq:minimal-rank}
  r_n(m_1,m_2,k) \;=\; \min\left\{\rk\big(Y_1X,\dots,Y_nX \big) :
    X\in\mathbb{R}^{m_2\times k},\; \rk(X)=k \right\}.
\end{equation}
  Now define
\begin{equation}
  \label{eq:minimal-rank-summary}
  S_n(m_1,m_2) = \min_{1\le k<m_2}\bigl\{ m_2 r_n(m_1,m_2,k) - m_1 k \bigr\}. 
\end{equation}
For generic data matrices, the results in this section can be summarized as follows.

\begin{theorem}
  \label{thm:Snm1m2}
  The likelihood function is a.s.~bounded if and only if
  a.s.~$S_n(m_1,m_2)\ge 0$.  The Kronecker MLE
  exists uniquely a.s.~if and only if a.s.~$S_n(m_1,m_2)> 0$.
\end{theorem}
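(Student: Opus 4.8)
The plan is to derive Theorem~\ref{thm:Snm1m2} by combining the necessary condition of Theorem~\ref{thm:necessary-rank} with the sufficient condition of Theorem~\ref{thm:sufficient-rank}, after re-expressing their hypotheses — which are phrased via square transformation matrices $X\in\mathbb{R}^{m_2\times m_2}$ — in terms of the minimal ranks $r_n(m_1,m_2,k)$ of~\eqref{eq:minimal-rank}, which use $m_2\times k$ matrices of full column rank. Throughout we take $m_2\ge2$; the case $m_2=1$ is the ordinary vector normal model.

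The first step is a rank identity. For any square $X\in\mathbb{R}^{m_2\times m_2}$ with $\rk(X)=k\ge1$, write $X=X'W$ with $X'\in\mathbb{R}^{m_2\times k}$ and $W\in\mathbb{R}^{k\times m_2}$ both of rank $k$; then
\[
(Y_1X,\dots,Y_nX)=(Y_1X',\dots,Y_nX')(I_n\otimes W),
\]
and since $I_n\otimes W$ has full row rank $nk$, right multiplication by it preserves rank, so $\rk(Y_1X,\dots,Y_nX)=\rk(Y_1X',\dots,Y_nX')\ge r_n(m_1,m_2,k)$. Conversely, given $X'\in\mathbb{R}^{m_2\times k}$ of rank $k$ with $k<m_2$, appending $m_2-k$ zero columns produces a non-zero singular matrix $X\in\mathbb{R}^{m_2\times m_2}$ of rank $k$ with $\rk(Y_1X,\dots,Y_nX)=\rk(Y_1X',\dots,Y_nX')$. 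Hence, for $1\le k\le m_2-1$, the minimum of $\rk(Y_1X,\dots,Y_nX)$ over square $X$ of rank $k$ is exactly $r_n(m_1,m_2,k)$, while for $k=m_2$ it equals $\rk(Y_1,\dots,Y_n)$ since $X$ is then invertible. Consequently, ``every singular $X\in\mathbb{R}^{m_2\times m_2}$ satisfies $\rk(Y_1X,\dots,Y_nX)\ge\frac{m_1}{m_2}\rk(X)$'' is equivalent to $S_n(m_1,m_2)\ge0$, and the same statement with strict inequality and $X\neq0$ is equivalent to $S_n(m_1,m_2)>0$.

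With this translation, Theorems~\ref{thm:necessary-rank}(i) and~\ref{thm:sufficient-rank}(i), together with Lemma~\ref{lem:Y-full-row-rank}, show that for every data set, $\ell$ is bounded above if and only if $\rk(Y_1,\dots,Y_n)=m_1$ and $S_n(m_1,m_2)\ge0$; likewise Theorems~\ref{thm:necessary-rank}(ii) and~\ref{thm:sufficient-rank}(ii), together with Lemmas~\ref{lem:Y-full-row-rank} and~\ref{lem:unique}, show that the Kronecker MLE exists uniquely if and only if $\rk(Y_1,\dots,Y_n)=m_1$ and $S_n(m_1,m_2)>0$. To finish, I would observe that for generic data these two conditions reduce to $S_n(m_1,m_2)\ge0$ and $S_n(m_1,m_2)>0$, respectively: if $nm_2\ge m_1$ then $\rk(Y_1,\dots,Y_n)=m_1$ holds generically, whereas if $nm_2<m_1$ then $\rk(Y_1,\dots,Y_n)<m_1$ always, and also $r_n(m_1,m_2,1)\le n$ forces $S_n(m_1,m_2)\le m_2n-m_1<0$, so in this regime the conditions ``$\rk(Y_1,\dots,Y_n)=m_1$ and $S_n(m_1,m_2)\ge0$'' and ``$S_n(m_1,m_2)\ge0$'' are both impossible, and similarly with strict inequality. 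Since each $r_n(m_1,m_2,k)$ attains its generic value off a proper algebraic subset, $S_n(m_1,m_2)$ is a.s.\ constant; hence $\ell$ is a.s.\ bounded if and only if a.s.\ $S_n(m_1,m_2)\ge0$, and the Kronecker MLE a.s.\ exists uniquely if and only if a.s.\ $S_n(m_1,m_2)>0$.

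As this result merely consolidates Theorems~\ref{thm:necessary-rank} and~\ref{thm:sufficient-rank}, I do not expect a serious obstacle. The step requiring the most care is the rank identity in the second paragraph — it must align the two theorems' square-matrix hypotheses precisely with the sign of $S_n(m_1,m_2)$ — and, after that, the only point to watch is the separate treatment of the regime $nm_2<m_1$ together with the appeal to genericity of $\rk(Y_1,\dots,Y_n)=m_1$ when $nm_2\ge m_1$.
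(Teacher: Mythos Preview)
Your argument is correct and matches the paper's intent: Theorem~\ref{thm:Snm1m2} is stated there only as a summary of Theorems~\ref{thm:necessary-rank} and~\ref{thm:sufficient-rank} without a separate proof, and your rank identity linking square $X$ of rank $k$ to full-rank $m_2\times k$ matrices is exactly the translation needed to pass from those theorems' hypotheses to the sign of $S_n(m_1,m_2)$. The additional care you take with the regime $nm_2<m_1$ and with genericity goes slightly beyond what the paper spells out, but is in the same spirit.
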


\section{Square matrices}
\label{sec:square-matrices}

This section treats the case of square data matrices.  So, $m_1=m_2$
and we denote this common value also by $m$.   The
results we develop, specifically,
Corollaries~\ref{cor:n1-square},~\ref{cor:square-Nu}, and~\ref{cor:Nb-square}, yield the
following statement about the three sample size thresholds.

\begin{theorem}
  \label{thm:square-thresholds}
  For square data matrices of any size $m\ge 2$, it holds that
  $N_e(m,m)=N_b(m,m)=1$ and $N_u(m,m)=3$.
\end{theorem}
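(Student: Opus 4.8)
The plan is to read off all three thresholds from the algebraic criteria of Section~\ref{sec:algebr-cond-exist}, after using the group action of Section~\ref{sec:group-action} to normalize the data when $n=1$. Since $m_1=m_2=m$, the ratio $m_1/m_2$ equals $1$, and for data $Y_1,\dots,Y_n$ with every $Y_i$ invertible one has $\rk(Y_1X,\dots,Y_nX)=\dim(Y_1V+\dots+Y_nV)$ with $V=\mathrm{Im}(X)$; hence $r_n(m,m,k)=\min_{\dim V=k}\dim(Y_1V+\dots+Y_nV)$, and by Theorem~\ref{thm:Snm1m2} everything reduces to comparing $\dim(Y_1V+\dots+Y_nV)$ with $\dim V$ over proper nonzero subspaces $V\subseteq\mathbb{R}^m$.

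I would first treat $n=1$, which yields $N_b(m,m)=N_e(m,m)=1$. A generic $Y_1$ is invertible, so by Lemma~\ref{lem:group-action} we may assume $Y_1=I_m$; then the profile function $g$ from~\eqref{eq:profile-neglik} is identically $0$, hence bounded below and attaining its minimum. By Lemma~\ref{lem:profile-likelihood} the likelihood is a.s.\ bounded and the Kronecker MLE a.s.\ exists already at this minimal sample size, which forces $N_b(m,m)=N_e(m,m)=1$. (Equivalently $r_1(m,m,k)=k$, so $S_1(m,m)=0$; the value $0$ rather than a positive number corresponds to $g\equiv 0$ being non-coercive, consistent with $N_u(m,m)>1$.)

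Next I would show $S_3(m,m)>0$ a.s., which by Theorem~\ref{thm:sufficient-rank}(ii) gives $N_u(m,m)\le 3$. Writing $A=Y_1^{-1}Y_2$ and $B=Y_1^{-1}Y_3$ (defined a.s.), a proper nonzero $V$ violates $\dim(Y_1V+Y_2V+Y_3V)>\dim V$ precisely when $Y_1V=Y_2V=Y_3V$, i.e.\ when $AV=V$ and $BV=V$, so it suffices that a generic pair of real $m\times m$ matrices has no common invariant subspace besides $\{0\}$ and $\mathbb{R}^m$. This is standard: the cyclic shift and $\diag(1,2,\dots,m)$ have no common invariant subspace, so the bad pairs form a proper subvariety of $M_m(\mathbb{R})^2$ (alternatively, for $1\le k\le m-1$ the incidence set $\{(A,B,V):AV\subseteq V,\ BV\subseteq V\}$ in $M_m(\mathbb{R})^2\times\mathrm{Gr}(k,m)$ has dimension $2m^2-k(m-k)<2m^2$); this pulls back to a genericity statement for $(Y_1,Y_2,Y_3)$ because for fixed invertible $Y_1$ the map $(Y_2,Y_3)\mapsto(A,B)$ is a linear isomorphism. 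Since $r_n(m,m,k)$ cannot decrease when further data matrices are appended, $S_n(m,m)>0$ a.s.\ for all $n\ge 3$, so the Kronecker MLE a.s.\ exists uniquely there.

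Finally, $N_u(m,m)\ge 3$ follows from $n=2$. By the real rational canonical form, $A=Y_1^{-1}Y_2$ always has a real invariant subspace $V$ with $\dim V\in\{1,2\}$: a linear factor of its characteristic polynomial gives a real eigenvector, and an irreducible quadratic factor $x^2+px+q$ gives the $2$-dimensional invariant subspace $\mathrm{span}(v,Av)$ for $v\in\ker(A^2+pA+qI)$. When $\dim V<m$ — automatic for $m\ge 3$, and of positive probability for $m=2$ (whenever $A$ has a real eigenvalue) — choosing $X$ with $\mathrm{Im}(X)=V$ and $\rk(X)=\dim V$ gives $\rk(Y_1X,Y_2X)=\dim(Y_1V)=\dim V=\rk(X)$, so $S_2(m,m)\le 0$ and, by Theorem~\ref{thm:necessary-rank}(ii), the MLE is not coercive and not unique. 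As this occurs with positive probability, the MLE does not a.s.\ exist uniquely at $n=2$, whence $N_u(m,m)\ge 3$; altogether $N_u(m,m)=3$ and $N_b(m,m)=N_e(m,m)=1$. The crux is the $n=3$ claim that two generic matrices share no nontrivial invariant subspace; a subtlety to keep in mind is that for $(m,n)=(2,2)$ the MLE is unique exactly when $Y_1^{-1}Y_2$ has complex eigenvalues, which happens with probability strictly between $0$ and $1$, so $n=2$ is excluded from ``a.s.\ unique'' even though uniqueness is not a.s.\ false either.
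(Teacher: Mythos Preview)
Your treatment of $N_b(m,m)=1$ matches the paper (Proposition~\ref{prop:square-n1}), and your proof that $N_u(m,m)\ge 3$ via a low-dimensional invariant subspace of $Y_1^{-1}Y_2$ is essentially Lemma~\ref{lm:r2mmk}. Your proof that $N_u(m,m)\le 3$, however, takes a genuinely different route: the paper simply invokes the known upper bound $N_u\le\lfloor m_1/m_2+m_2/m_1\rfloor+1=3$ from Proposition~\ref{prop:known-results}, whereas you argue directly that two generic matrices $A=Y_1^{-1}Y_2$ and $B=Y_1^{-1}Y_3$ share no common proper nonzero invariant subspace, via an incidence-variety dimension count over $M_m(\mathbb{R})^2\times\mathrm{Gr}(k,m)$. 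Your route is self-contained and gives a clean conceptual picture of why $n=3$ suffices; the paper's is shorter by offloading to the literature.

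There is, however, a genuine gap in your claim that $N_e(m,m)=1$. You establish existence of the MLE at $n=1$ (since $g\equiv 0$) and at $n\ge 3$ (via unique existence), but you never verify existence at $n=2$. Knowing $S_2(m,m)=0$ only yields \emph{boundedness} of the likelihood by Theorem~\ref{thm:Snm1m2}; it does not guarantee that the supremum is attained---Proposition~\ref{prop:square-2x2}(ii) even exhibits a (measure-zero) instance where a bounded $n=2$ likelihood fails to be achieved, and the paper's conclusion explicitly flags ``a.s.\ boundedness $\Rightarrow$ a.s.\ existence'' as a conjecture rather than a theorem. The paper closes this gap by exhibiting explicit critical points at $n=2$: after putting $(Y_1,Y_2)=(I_m,\text{real Jordan form})$, block-diagonal $\Psi$'s with each block a scalar multiple of the identity are shown to be critical, hence global maxima by g-convexity (Lemma~\ref{lem:gconvex}). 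Without such a step you cannot infer $N_e=1$ from existence at $n=1$ alone, because the definition of $N_e$ requires existence for \emph{every} $n\ge N_e$, and no a~priori monotonicity-in-$n$ result for existence is available.
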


Throughout the remainder of this section we tacitly assume that each
one of the data matrices $Y_1,\dots,Y_n$ is invertible, as is the case almost
surely.

\subsection{One square data matrix}
\label{sec:square-rank-drops}

We begin with an observation utilized in
\cite{volfovsky:2015}.

\begin{proposition}
  \label{prop:square-n1}
  If $m_1=m_2$ and $n=1$, then the profile likelihood function $g$
  from~(\ref{eq:profile-neglik}) is constant.
\end{proposition}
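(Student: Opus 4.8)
The plan is to use the fact that, since $n=1$ and $Y_1$ is square and almost surely invertible, the matrix $Y_1$ lies in $\mathrm{GL}(m,\mathbb{R})$, so the single data matrix can be transformed into the identity. Concretely, I would invoke the group action from Section~\ref{sec:group-action}: taking $A=Y_1^{-1}$ and $B=I_m$ sends $Y_1\mapsto A Y_1 B = I_m$. By Lemma~\ref{lem:group-action}, this transformation only translates the log-likelihood surface, so it does not affect whether the profile likelihood is constant; hence it suffices to prove the claim when $Y_1=I_m$.

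Next I would simply evaluate $g$ directly in this canonical case. With $n=1$, $m_1=m_2=m$, and $Y_1=I_m$, the formula~(\ref{eq:profile-neglik}) gives
\[
  g(\Psi) = m\log\det\bigl(I_m\,\Psi\, I_m\bigr) - m\log\det(\Psi) = m\log\det(\Psi) - m\log\det(\Psi) = 0
\]
for every $\Psi\in\mathit{PD}(m)$. Thus $g$ is identically zero, in particular constant, in the canonical case, and therefore constant for the original data $Y_1$ as well.

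I expect there to be essentially no obstacle here: the only subtlety is making sure that the reduction to $Y_1 = I_m$ is legitimate, which is exactly what Lemma~\ref{lem:group-action} provides (a translation of the log-likelihood surface, hence of its profile, preserves the property of being constant). One could even bypass the group action and argue directly: for $n=1$ one has $\det\bigl(Y_1\Psi Y_1^T\bigr) = \det(Y_1)^2\det(\Psi)$, so $g(\Psi) = m\log\bigl(\det(Y_1)^2\det(\Psi)\bigr) - m\log\det(\Psi) = m\log\det(Y_1)^2$, which is a constant independent of $\Psi$. Either route gives the statement; I would likely present the direct computation as the proof, with a remark that it also follows from invariance.
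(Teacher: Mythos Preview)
Your direct computation is exactly the paper's proof: it simply notes that with $Y_1$ invertible, $g(\Psi)=m\log\det(Y_1\Psi Y_1^T)-m\log\det(\Psi)=2m\log|\det(Y_1)|$, which is independent of $\Psi$. The group-action reduction you sketch is also valid but unnecessary here.
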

\begin{proof}
  The single $m\times m$ data matrix $Y_1$ being invertible, we have
  \[
    g(\Psi)\;=\; m\log\det(Y_1\Psi Y_1^T)-m\log\det(\Psi) \;=\;
    2m\log|\det(Y_1)|,
  \]
  which does not depend on $\Psi$.
\end{proof}

The proposition implies that for $n=1$ the likelihood function
achieves its maximum but not uniquely so.  We may deduce from the rank
conditions in Section~\ref{sec:algebr-cond-exist} that
$r_1(m,m,k)\ge k$ for all $m\ge 2$ and $k=1,\dots,m$.  As
$r_n(m_1,m_2,k)$ is non-decreasing in $n$, we obtain that
$r_n(m,m,k)\ge k$ always, which implies $S_n(m,m)\ge 0$.  By
Theorem~\ref{thm:Snm1m2}(i), we have:
\begin{corollary}
  \label{cor:n1-square}
  The  boundedness threshold of square matrices of size $m\ge 2$ is $N_b(m,m)=1$.
\end{corollary}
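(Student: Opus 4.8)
The plan is to reduce the statement to the rank criterion of Theorem~\ref{thm:Snm1m2}(i) and to check that the associated quantity $S_n(m,m)$ is nonnegative for every $n\ge 1$. First I would dispose of the base case $n=1$: by Proposition~\ref{prop:square-n1} the profile likelihood $g$ is constant when $m_1=m_2$ and $n=1$, so it is trivially bounded below, and hence the Kronecker likelihood is a.s.\ bounded at $n=1$. Equivalently, and more directly, a generic square $Y_1$ is invertible, so for any $X\in\mathbb{R}^{m\times k}$ with $\rk(X)=k$ the product $Y_1X$ again has rank $k$; thus $r_1(m,m,k)=k$ for $k=1,\dots,m-1$, which gives
\[
 S_1(m,m)=\min_{1\le k<m}\bigl(m\cdot k-m\cdot k\bigr)=0\ \ge 0 .
\]

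Next I would extend this to all $n$ by monotonicity in the sample size. Appending the extra column blocks $Y_2X,\dots,Y_nX$ to $Y_1X$ can only increase rank, so $\rk(Y_1X,\dots,Y_nX)\ge\rk(Y_1X)$ for every $X$, and taking minima over $X$ of fixed rank $k$ yields $r_n(m,m,k)\ge r_1(m,m,k)=k$ for all $n\ge 1$. Consequently $S_n(m,m)=\min_{1\le k<m}\bigl(m\, r_n(m,m,k)-m k\bigr)\ge 0$ for every $n\ge 1$, and Theorem~\ref{thm:Snm1m2}(i) shows that the likelihood function is a.s.\ bounded for all $n\ge 1$. In particular it is a.s.\ bounded at $n=1$; since $N_b(m,m)$ is a positive integer by Definition~\ref{def:thresholds}, this forces $N_b(m,m)=1$.

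I do not anticipate any real obstacle here, as the ingredients are already assembled in the paragraph preceding the statement. The only point worth a word of care is that boundedness at the single value $n=1$ does not by itself determine the threshold; one must additionally use that the rank characterization of Theorem~\ref{thm:Snm1m2} is monotone in $n$ (through the monotonicity of $r_n(m,m,k)$ established above), so that boundedness persists for all larger $n$ and the threshold is genuinely equal to $1$.
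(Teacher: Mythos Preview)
Your proposal is correct and follows essentially the same route as the paper: the paper also derives $r_1(m,m,k)\ge k$, invokes monotonicity of $r_n$ in $n$ to get $S_n(m,m)\ge 0$ for all $n$, and then appeals to Theorem~\ref{thm:Snm1m2}. Your explicit remark that monotonicity in $n$ is needed (so that boundedness at $n=1$ genuinely pins down the threshold rather than just giving an isolated value) is a nice clarification that the paper leaves implicit.
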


\subsection{Two square data matrices}

Moving to the case of $n=2$ square data matrices, we first provide
detail on the ranks $r_2(m,m,k)$.

\begin{lemma}
\label{lm:r2mmk}
Let $Y_1,Y_2\in\mathbb{R}^{m\times m}$ be generic, and let
$1\le k\le m$.  If $Y_1^{-1}Y_2$ has a real eigenvalue, or if $k$ is
even, then
\[
 r_2(m,m,k) = k.
\]
If all eigenvalues of $Y_1^{-1}Y_2$ are complex and if $k$ is odd,
then
\[
 r_2(m,m,k) = k+1.
\]
\end{lemma}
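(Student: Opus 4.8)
The plan is to use the group action from Section~\ref{sec:group-action} to bring the pair $(Y_1,Y_2)$ into a convenient canonical form, and then reduce the computation of $r_2(m,m,k)$ to a block-diagonal problem. Since $Y_1$ is invertible almost surely, acting by $(Y_1^{-1},I)$ replaces $(Y_1,Y_2)$ by $(I, Y_1^{-1}Y_2)$, and the quantity $\rk(Y_1X,Y_2X)=\rk(X,Y_1^{-1}Y_2 X)$ is unchanged because left multiplication by an invertible matrix preserves rank; note also that the admissible set of $X$ with $\rk(X)=k$ is preserved under right multiplication by an invertible matrix, so we may further conjugate $Y_1^{-1}Y_2$ into any similar matrix. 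For generic data, $C:=Y_1^{-1}Y_2$ has distinct eigenvalues, hence is diagonalizable over $\mathbb{C}$; over $\mathbb{R}$ it is similar to a block-diagonal matrix with $1\times 1$ blocks for the real eigenvalues and $2\times 2$ blocks (rotation-scaling blocks) for each conjugate pair of complex eigenvalues. So it suffices to analyze, for such a block-diagonal $C$, the minimum of $\rk(X,CX)$ over $m\times k$ matrices $X$ of full column rank $k$.

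The next step is to interpret $\rk(X,CX)$ invariantly: writing $V=\mathrm{Im}(X)\subseteq\mathbb{R}^m$, a $k$-dimensional subspace, we have $\rk(X,CX)=\dim(V+CV)=2k-\dim(V\cap CV)$, and $\dim(V\cap C V)$ is maximized precisely when $V$ is ``as $C$-invariant as possible.'' Concretely, $\dim(V\cap CV)\ge\ell$ exactly when $V$ contains an $\ell$-dimensional subspace $W$ with $CW\subseteq V$; pushing this further, the maximum of $\dim(V\cap CV)$ over $k$-dimensional $V$ equals $k$ minus the minimal ``defect'' of a $k$-dimensional subspace from being $C$-invariant. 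Because $C$ (generically) is a direct sum of real eigenlines and irreducible real $2\times 2$ blocks, every $C$-invariant real subspace is a direct sum of a subset of the eigenlines and a subset of the $2\times 2$ planes, hence has even dimension plus the number of chosen eigenlines; in particular $C$ has $C$-invariant subspaces of every dimension $k$ as soon as it has at least one real eigenvalue, giving $r_2(m,m,k)=k$ in that case, and likewise for every even $k$ regardless (take a sum of $k/2$ of the $2\times 2$ planes, or mix in eigenlines). When all eigenvalues are complex and $k$ is odd, no $C$-invariant subspace of dimension $k$ exists, so $\dim(V\cap CV)\le k-1$ and hence $r_2(m,m,k)\ge k+1$; for the matching upper bound one exhibits a single $V$ of odd dimension $k$ that is ``invariant up to one dimension,'' e.g.\ take the direct sum of $(k-1)/2$ full $2\times 2$ planes together with a single line inside one further $2\times 2$ plane, for which $\dim(V\cap CV)=k-1$ and thus $\rk(X,CX)=k+1$.

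Two points need care. First, one must justify that for \emph{generic} data the eigenvalues of $C=Y_1^{-1}Y_2$ are simple (so $C$ is $\mathbb{R}$-diagonalizable into $1\times1$ and irreducible $2\times2$ blocks) and that the realizations outside this locus form a measure-zero algebraic set; this is a standard discriminant argument, and it is consistent with the ``generic'' convention fixed at the end of Section~\ref{sec:preliminaries}. Second, in the all-complex odd-$k$ case one must verify that $k+1$ is genuinely the minimum, i.e.\ that no $k$-dimensional $V$ achieves $\dim(V\cap CV)=k$: this is exactly the nonexistence of an odd-dimensional $C$-invariant real subspace, which follows from the block decomposition since each block contributes an even-dimensional invariant summand and irreducible $2\times2$ blocks have no proper nonzero invariant subspace over $\mathbb{R}$. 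The main obstacle is this structural claim about real invariant subspaces of $C$ and the exact bookkeeping of $\dim(V\cap CV)$ in terms of how $V$ decomposes relative to the blocks; once that combinatorics is pinned down, both directions of the rank formula follow immediately.
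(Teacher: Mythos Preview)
Your proposal is correct and follows essentially the same route as the paper: reduce to the pair $(I,W)$ with $W=Y_1^{-1}Y_2$, use the real Jordan form of $W$ (distinct eigenvalues generically), and build the minimizing $X$ from real eigenlines and the $2\times 2$ real-invariant planes. The paper states the construction at the level of explicit column vectors $x_j,(y_j,z_j)$, whereas you phrase it via $V=\mathrm{Im}(X)$ and the identity $\rk(X,WX)=\dim(V+WV)=2k-\dim(V\cap WV)$; these are equivalent.

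One small remark: your subspace formulation actually makes the lower bound in the ``all eigenvalues complex, $k$ odd'' case more transparent than the paper's one-line justification (``this would contradict the linear independence of eigenvectors''). Since $W$ is invertible, $\rk(X,WX)=k$ forces $WV=V$, i.e.\ $V$ is $W$-invariant; but with all eigenvalues non-real and simple, complexifying shows every real $W$-invariant subspace is a direct sum of the $2$-dimensional planes, hence of even dimension, so no such $V$ exists for odd $k$ and $r_2(m,m,k)\ge k+1$ follows. Your upper-bound witness (sum of $(k-1)/2$ planes plus one line in a further plane) matches the paper's. The only caveat is that $\dim(V+WV)=2k-\dim(V\cap WV)$ uses $\dim(WV)=k$, i.e.\ invertibility of $W$, which holds generically; you should say this explicitly.
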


\begin{proof} 
Let $W=Y_1^{-1}Y_2$.  
We evaluate
\[
 r_2(m,m,k) = \min_{\rk(X)=k}\rk\begin{pmatrix} Y_1X,  Y_2X \end{pmatrix}
= \min_{\rk(X)=k}\rk\begin{pmatrix} X, WX \end{pmatrix}.
\]
Since $W$ is real and generic, its characteristic function does not
have multiple zeros and all of its (complex) Jordan blocks are of size
1.  For any real eigenvalue $\lambda_j$, let
$x_j\in\mathbb{R}^{m}\setminus\{0\}$ be an associated real eigenvector
such that
\[
 W x_j = \lambda_j x_j.
\]
For any pair of complex eigenvalues $\mu_j \pm i \nu_j$, let
$y_j,z_j\in\mathbb{R}^m\setminus\{0\}$ be real vectors such that
$y_j \pm i z_j\in \mathbb{C}^{m}$ are eigenvectors corresponding to
$\mu_j \pm i \nu_j$, so
\[
 W(y_j \pm i z_j) = (\mu_j \pm i \nu_j)(y_j \pm i z_j)
\iff
  W\begin{pmatrix} y_j , z_j \end{pmatrix} = \begin{pmatrix} y_j , z_j \end{pmatrix}\begin{pmatrix} \mu_j & \nu_j \\ -\nu_j & \mu_j \end{pmatrix}.
\]
Altogether the vectors $x_j$, $y_j$, and $z_j$ form a basis of $\mathbb{R}^m$.

Suppose now that $W$ has at least one real eigenvalue or $k$ is even.
Choose a full rank matrix $X^*\in\mathbb{R}^{m\times k}$ by selecting
its columns as individual vectors $x_j$ or pairs $(y_j,z_j)$, that is,
\[
 X^* = (x_1,\ldots,x_s,y_1,z_1,\ldots,y_t,z_t),\quad s+2t=k.
\]
Then
\[
  r_2(m,m,k) \le \rk\begin{pmatrix} X^*, WX^* \end{pmatrix} = k.
\]
On the other hand, trivially we have
\[
r_2(m,m,k)= \min_{\rk(X)=k}\rk\begin{pmatrix} X, WX \end{pmatrix} \ge \min_{\rk(X)=k}\rk(X)=k.
\]

In the remaining case where $k$ is odd and all eigenvalues complex, we
may reduce the rank of $(X,WX)$ to $k+1$ by choosing choose $k-1$
columns of $X$ based on eigenvectors.  However, we cannot reduce rank
further as this would contradict the linear independence of
eigenvectors.
\end{proof}

\begin{lemma}
Square matrices of size $m\ge 2$ have
\[
  S_2(m,m) = \begin{cases}
    2 & \mbox{if $m=2$ and $Y_1^{-1}Y_2$ has complex eigenvalues}, \\
    0 & \mbox{otherwise}.
    \end{cases}
\]
\end{lemma}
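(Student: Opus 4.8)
The plan is to reduce everything to the rank formula already established in Lemma~\ref{lm:r2mmk}. Since $m_1=m_2=m$, definition \eqref{eq:minimal-rank-summary} collapses to
\[
 S_2(m,m) = \min_{1\le k<m}\bigl\{ m\, r_2(m,m,k) - mk\bigr\} = m\cdot \min_{1\le k<m}\bigl\{ r_2(m,m,k) - k\bigr\},
\]
so it suffices to determine the minimum over $k\in\{1,\dots,m-1\}$ of the nonnegative integer $d(k):=r_2(m,m,k)-k$. By Lemma~\ref{lm:r2mmk}, for generic $Y_1,Y_2$ we have $d(k)=1$ exactly when every eigenvalue of $W=Y_1^{-1}Y_2$ is complex and $k$ is odd, and $d(k)=0$ in all other cases. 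Thus the whole computation is a short case analysis on the spectrum of $W$.

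First I would handle the case in which $W$ has at least one real eigenvalue: then $d(k)=0$ for every $k\in\{1,\dots,m-1\}$, and since $m\ge 2$ this index range is nonempty, so the minimum is $0$ and $S_2(m,m)=0$. Next I would handle the case in which all eigenvalues of $W$ are complex; this forces $m$ to be even, because a real polynomial of odd degree has a real root, so $m=2$ or $m\ge 4$. For $m=2$ the only admissible index is $k=1$, which is odd, hence $d(1)=1$, the minimum equals $1$, and $S_2(2,2)=2\cdot 1=2$. For $m\ge 4$ the index $k=2$ is admissible and even, so $d(2)=0$, the minimum is $0$, and $S_2(m,m)=0$. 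Combining the two cases yields precisely the stated piecewise formula: the value $2$ occurs only for $m=2$ with $W$ having complex eigenvalues, and $0$ in every other situation.

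There is no real obstacle here beyond careful bookkeeping; all the substance is carried by Lemma~\ref{lm:r2mmk}. The one point to be careful about is exactly the parity observation above: ``all eigenvalues complex'' can only occur for even $m$, which is what makes the even index $k=2$ available to drive the minimum to $0$ as soon as $m>2$. I would also note explicitly that the dichotomy ``$W$ has a real eigenvalue'' versus ``all eigenvalues of $W$ are complex'' partitions the generic locus, since multiple eigenvalues are avoided generically (so no eigenvalue sits ``on the boundary''), and that the genericity hypotheses used in Lemma~\ref{lm:r2mmk} — invertibility of $Y_1$ and simplicity of the spectrum of $W$ — hold off a proper algebraic subset, so the formula is an almost-sure statement as claimed.
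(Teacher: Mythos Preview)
Your proof is correct and follows essentially the same approach as the paper: both reduce to Lemma~\ref{lm:r2mmk} and then locate an index $k$ with $r_2(m,m,k)=k$ whenever $m\ge 3$. The only organizational difference is that the paper splits first on $m=2$ versus $m\ge 3$ (using the even index $k=2$ directly for the latter, which works irrespective of the eigenvalue type), whereas you split first on the spectrum of $W$ and then invoke the parity observation that ``all eigenvalues complex'' forces $m$ even; the content is the same.
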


\begin{proof}
When $m=2$ and $Y_1^{-1}Y_2$ has complex eigenvalues, $r_2(2,2,1)=1+1$, and
$S_2(2,2) = \min_{1\le k\le 2-1} \{ 2 r_2(2,2,k)-2 k \} = 2 r_2(2,2,1)-2\times 1=2$.

When $m=2$ and $Y_1^{-1}Y_2$ has real eigenvalues, $r_2(2,2,1)=1$, and
$S_2(2,2) = 2 r_2(2,2,1)-2\times 1=0$.

When $m\ge 3$, because of $r_2(m,m,k)\ge k$, $S_2(m,m) = \min_{1\le k\le m-1} \left\{ m r_2(m,m,k)-m k \right\} \ge 0$ holds, and the equality always attains at $k=2$, $r_2(m,m,k)=r_2(m,m,2)=2$.
\end{proof}

\begin{corollary}
  \label{cor:square-Nu}
  The  uniqueness threshold of  square matrices of size $m\ge 2$ is
  $N_u(m,m)=3$.
\end{corollary}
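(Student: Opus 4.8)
The goal is to show that the uniqueness threshold for square $m\times m$ data matrices equals $3$ for every $m\ge 2$, drawing on the two preceding lemmas together with Theorem~\ref{thm:Snm1m2}.

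The plan is to use Theorem~\ref{thm:Snm1m2}(ii), which says the Kronecker MLE exists uniquely a.s.\ precisely when $S_n(m,m)>0$ a.s. First I would dispose of $n=1$ and $n=2$: Proposition~\ref{prop:square-n1} shows $g$ is constant when $n=1$, so the MLE does not exist uniquely and $N_u(m,m)\ge 2$; and the lemma immediately above computes $S_2(m,m)$, which is $0$ for $m\ge 3$ and also $0$ for $m=2$ when $Y_1^{-1}Y_2$ has real eigenvalues. Since a generic real $2\times 2$ matrix has real eigenvalues with positive probability, $S_2(2,2)=0$ a.s.\ is false only on a positive-probability event; in fact for the a.s.\ statement we need $S_2(m,m)>0$ to fail with positive probability, which it does for all $m\ge 2$. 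Hence by Theorem~\ref{thm:Snm1m2}(ii) the MLE does not exist uniquely a.s.\ at $n=2$, giving $N_u(m,m)\ge 3$.

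Next I would prove the matching upper bound $N_u(m,m)\le 3$, i.e.\ that for $n=3$ generic square matrices one has $S_3(m,m)>0$, equivalently $m\,r_3(m,m,k)-m\,k>0$, i.e.\ $r_3(m,m,k)\ge k+1$, for all $1\le k\le m-1$. Here I would invoke monotonicity of $r_n$ in $n$ from Section~\ref{sec:algebr-cond-exist} together with the known sufficient condition: by the main theorem of \cite{soloveychik:2016}, for square matrices ($m_1=m_2=m$) unique existence holds a.s.\ as soon as $n>\frac{m}{m}+\frac{m}{m}=2$, that is, $n\ge 3$. This already delivers $N_u(m,m)\le 3$ directly, without recomputing minimal ranks. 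Combining with the lower bound from the previous paragraph yields $N_u(m,m)=3$, and Corollary~\ref{cor:n1-square} records $N_b(m,m)=1$.

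I expect the only delicate point to be the $n=2$ lower bound: one must be careful that the relevant ``generic'' event for Theorem~\ref{thm:Snm1m2} is an open dense (complement of a proper algebraic set) condition, and that the event ``$Y_1^{-1}Y_2$ has a real eigenvalue'' has positive probability for continuous data, so that $S_2(m,m)>0$ fails on a positive-probability set and therefore the a.s.\ statement ``$S_2(m,m)>0$'' is false. For $m\ge 3$ this is immediate since $S_2(m,m)=0$ identically for generic data (the minimum is attained at $k=2$ with $r_2(m,m,2)=2$), so no probabilistic subtlety arises; only the $m=2$ case needs the eigenvalue remark, which is exactly what Lemma~\ref{lm:r2mmk} and the $S_2$ lemma have already set up.
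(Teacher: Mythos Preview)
Your proposal is correct and follows essentially the same route as the paper. The paper obtains the upper bound $N_u(m,m)\le 3$ by invoking Proposition~\ref{prop:known-results} (which packages the Soloveychik bound you cite directly), and the lower bound from the $S_2(m,m)$ computation and Theorem~\ref{thm:Snm1m2}, treating the $m=2$ eigenvalue subtlety exactly as you do; your separate discussion of $n=1$ is harmless but unnecessary, since failure at $n=2$ already forces $N_u(m,m)\ge 3$.
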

\begin{proof}
  We know from Proposition~\ref{prop:known-results} that
  $N_u(m,m)\le 3$.  If $m\ge 3$, then Lemma~\ref{lm:r2mmk} yields that
  $S_2(m,m)=0$ generically and thus a sample size of $n=2$ is not
  sufficient for a.s.~unique existence of the Kronecker MLE.  Hence,
  $N_u(m,m)= 3$.

  If $m=2$, then there is the subtlety that $S_2(m,m)=0$ if
  $Y_1^{-1}Y_2$ has real eigenvalues, and $S_2(m,m)=1$ if
  $Y_1^{-1}Y_2$ has complex eigenvalues.  However, as either case
  arises with positive probability, $N_u(m,m)=3$ also for $m=2$.
\end{proof}

\begin{remark}
  Contrasting cases (i) and (iii) in
  Proposition~\ref{prop:square-2x2}, we see that the minimal rank
  $r_n(m_1,m_2,k)$ may change when we minimize over complex instead of
  real matrices.
\end{remark}

\subsection{Achieving maximum likelihood for two square data matrices}

As we know that $N_u(m,m)=3$ and $N_b(m,m)=1$, with maximum achieved, there only remains the question  whether the (bounded) likelihood function achieves
its maximum for a sample of $n=2$ invertible data matrices.  We begin
with the smallest case of $m=2$, which exhibits exceptional behavior
as noted in the proof of Corollary~\ref{cor:square-Nu}; see also
\cite[Section 4.3]{soloveychik:2016}.  The following proposition
considers all possible cases and gives their probabilities.

\begin{proposition}
  \label{prop:square-2x2}
  Suppose $n=2$ with invertible data matrices $Y_1$ and $Y_2$ of
  size $2\times 2$.   Three cases are possible:
  \begin{enumerate}
  \item[(i)] The matrix $Y_1^{-1}Y_2$ has real eigenvalues and is
    diagonalizable.  The likelihood function is then bounded and
    achieves its maximum, but not uniquely so.
  \item[(ii)] The matrix $Y_1^{-1}Y_2$ has real eigenvalues but is not
    diagonalizable.  The likelihood function is then bounded but fails
    to achieve its maximum.
  \item[(iii)] The eigenvalues of $W=(w_{jk})=Y_1^{-1}Y_2$ are
    complex.  The Kronecker MLE then exists uniquely and is given by
    any positive definite matrix of the form 
    \[
      \Psi \;=\; 
      \lambda\begin{pmatrix}
        w_{12} & \frac{1}{2}(w_{22}-w_{11})\\
        \frac{1}{2}(w_{22}-w_{11}) & -w_{21}
    \end{pmatrix}, \qquad \lambda\in\mathbb{R}.
    \]
  \end{enumerate}
  Only cases (i) and (iii) occur with positive probability.  If the
  entries of $Y_1$ and $Y_2$ are i.i.d.~$\mathcal N(0,1)$, then case (i) has
  probability $\pi/4\approx 0.7854$.
\end{proposition}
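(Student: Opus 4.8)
The plan is to reduce to canonical data by exploiting the $\mathrm{GL}(2,\mathbb{R})\times\mathrm{GL}(2,\mathbb{R})$-invariance of the likelihood surface. Since $Y_1,Y_2$ are invertible, a transformed pair $(AY_1B,AY_2B)$ has $(AY_1B)^{-1}(AY_2B)=B^{-1}WB$ with $W=Y_1^{-1}Y_2$, and conversely every matrix conjugate to $W$ arises this way; so by Lemma~\ref{lem:group-action} the log-likelihood surface of the data depends only on the conjugacy class of $W$. An invertible real $2\times 2$ matrix is conjugate to exactly one of $\diag(\lambda_1,\lambda_2)$ with $\lambda_i\in\mathbb{R}\setminus\{0\}$ (case (i), including the scalar matrices), a Jordan block $\bigl(\begin{smallmatrix}\lambda&1\\0&\lambda\end{smallmatrix}\bigr)$ with $\lambda\neq0$ (case (ii)), or $\bigl(\begin{smallmatrix}\mu&\nu\\-\nu&\mu\end{smallmatrix}\bigr)$ with $\nu\neq0$ (case (iii)), so the three cases are exhaustive. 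By Lemma~\ref{lem:profile-likelihood} it suffices to study the profile function $g$ of~\eqref{eq:profile-neglik}; writing $Y_2=Y_1W$ gives $g(\Psi)=\mathrm{const}+2\log\det(\Psi+W\Psi W^T)-2\log\det\Psi$, and conjugating $W$ amounts to the bijective substitution $\Psi\mapsto B\Psi B^T$ on $\mathit{PD}(2)$ plus a constant, so we may put $W$ in canonical form.

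For case (iii) I would verify directly---without the reduction---that the displayed matrix $\Psi_\ast=\bigl(\tfrac12\tr(W)I_2-W\bigr)J$, $J=\bigl(\begin{smallmatrix}0&1\\-1&0\end{smallmatrix}\bigr)$, is a critical point of $g$. The engine is the $2\times 2$ identity $W\Psi_\ast W^T=\det(W)\,\Psi_\ast$: writing $\Psi_\ast=-W_0J$ with $W_0=W-\tfrac12\tr(W)I_2$, and using $JW^T=\operatorname{adj}(W)J$ and $W\operatorname{adj}(W)=\det(W)I_2$, one gets $WW_0\operatorname{adj}(W)=\det(W)W_0$, hence $W\Psi_\ast W^T=-WW_0\operatorname{adj}(W)J=\det(W)\Psi_\ast$. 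Then $\Psi_\ast+W\Psi_\ast W^T=(1+\det W)\Psi_\ast$, and inverting the identity gives $W^T\Psi_\ast^{-1}W=\det(W)\Psi_\ast^{-1}$; together these yield the stationarity equation $(\Psi+W\Psi W^T)^{-1}+W^T(\Psi+W\Psi W^T)^{-1}W=\Psi^{-1}$ at $\Psi=\Psi_\ast$. As complex eigenvalues of $W$ are equivalent to $\det(W_0)=\det(W)-\tfrac14\tr(W)^2>0$ and $\det(\Psi_\ast)=\det(W_0)$, the matrix $\Psi_\ast$ is definite, so a suitable nonzero real multiple lies in $\mathit{PD}(2)$, and by Lemma~\ref{lem:gconvex} it is a global maximizer of $\ell$; thus the Kronecker MLE exists. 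For uniqueness, $W$ has no real eigenvector, so $r_2(2,2,1)=2>1$ by Lemma~\ref{lm:r2mmk} and Theorem~\ref{thm:sufficient-rank}(ii) yields that $\ell$ is coercive; the maximizers are then exactly the positive scalar multiples of $\Psi_\ast$, all sharing the same Kronecker product by~\eqref{eq:scaling}.

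For cases (i) and (ii) I would compute $g$ on canonical data directly. With $\Psi=\bigl(\begin{smallmatrix}a&b\\b&c\end{smallmatrix}\bigr)\in\mathit{PD}(2)$ and $W=\diag(\lambda_1,\lambda_2)$, the identity $(1+\lambda_1^2)(1+\lambda_2^2)=(1+\lambda_1\lambda_2)^2+(\lambda_1-\lambda_2)^2$ gives $\det(\Psi+W\Psi W^T)=(1+\lambda_1\lambda_2)^2(ac-b^2)+(\lambda_1-\lambda_2)^2ac$, so $e^{g(\Psi)/2}$ is proportional to $(1+\lambda_1\lambda_2)^2+(\lambda_1-\lambda_2)^2\,\tfrac{ac}{ac-b^2}$; since $ac/(ac-b^2)\ge 1$ with equality iff $b=0$, the minimum of $g$ is attained---exactly on the diagonal positive definite matrices when $\lambda_1\neq\lambda_2$, and on all of $\mathit{PD}(2)$ when $W$ is scalar---so the MLE exists but not uniquely, as this minimizing set contains matrices with distinct Kronecker products $\Psi\otimes\Psi_1(\Psi)$. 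For $W=\bigl(\begin{smallmatrix}\lambda&1\\0&\lambda\end{smallmatrix}\bigr)$ one finds instead $\det(\Psi+W\Psi W^T)=(1+\lambda^2)^2(ac-b^2)+c^2$, so $e^{g(\Psi)/2}\propto(1+\lambda^2)^2+\tfrac{c^2}{ac-b^2}$, whose infimum $(1+\lambda^2)^2$ over $\mathit{PD}(2)$ is approached as $a\to\infty$ but never attained; thus $\ell$ is bounded while the MLE does not exist. (Boundedness in all cases is also consistent with $S_2(2,2)\ge 0$.)

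It remains to compute probabilities. Case (ii) forces the single polynomial relation $\tr(W)^2=4\det(W)$, so it has probability $0$ for the absolutely continuous pair $(Y_1,Y_2)$, whence cases (i) and (iii) together have probability $1$. Case (iii) holds iff $\tr(W)^2-4\det(W)<0$, and since $\tr(W)=\tr(Y_2\operatorname{adj}(Y_1))/\det(Y_1)$ and $\det(W)=\det(Y_2)/\det(Y_1)$, this discriminant has the sign of $-\det G$, where $G$ is the Gram matrix of $(Y_1,Y_2)$ for the symmetric bilinear form polarizing $\det$ on $\mathbb{R}^{2\times2}\cong\mathbb{R}^4$, a form of signature $(2,2)$. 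Hence the probability of case (iii) is the chance that a uniform $2$-plane is definite for a fixed signature-$(2,2)$ form, $\operatorname{span}(Y_1,Y_2)$ being uniform on $\mathrm{Gr}(2,4)$ by $O(4)$-invariance of the Gaussian law. Passing to the oriented Grassmannian $\widetilde{\mathrm{Gr}}(2,4)\cong S^2\times S^2$ via $\Lambda^2\mathbb{R}^4=\Lambda^+\oplus\Lambda^-$, one checks the induced form $\eta^{\wedge2}$ has signature $(1,2)$ on each $\mathbb{R}^3$ factor, so for a decomposable unit $2$-vector $\omega=(\omega^+,\omega^-)$ the relevant determinant normalizes to $(c^+)^2+(c^-)^2-1$, with $c^\pm$ the coordinate of the uniform point $\widehat\omega^\pm\in S^2$ along the positive axis of $\eta^{\wedge2}$. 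By Archimedes' theorem $c^\pm$ are i.i.d.\ uniform on $[-1,1]$, so $(c^\pm)^2$ have density $(2\sqrt t\,)^{-1}$ on $(0,1)$ and the probability of case (iii) is $P\bigl((c^+)^2+(c^-)^2>1\bigr)=\tfrac12\int_0^1(1-\sqrt{1-u}\,)u^{-1/2}\,du=\tfrac12\bigl(2-\tfrac\pi2\bigr)=1-\tfrac\pi4$, so case (i) has probability $\pi/4$. (Alternatively one may condition on $Y_1$, note $W\mid Y_1$ is matrix normal, and reduce to a one-variable integral evaluated by a Beta function.) The main obstacle is this last assertion---recasting the discriminant inequality as the clean geometric probability and then evaluating it to $\pi/4$; the remaining, qualitative parts of the proposition are routine once the canonical forms are available.
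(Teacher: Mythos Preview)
Your argument is correct; the qualitative treatment of cases (i) and (ii) matches the paper almost verbatim.  Two places differ in method.

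For case~(iii) the paper works in canonical coordinates: it sets $Y_1=I_2$, $Y_2=\bigl(\begin{smallmatrix}a&b\\-b&a\end{smallmatrix}\bigr)$, computes $g(\Psi)=2\log\bigl(\text{const}+b^2\|\Psi\|_F^2/\det\Psi\bigr)$, observes this is minimized exactly when $\Psi$ is scalar, and then translates back to the original data to obtain the displayed formula.  You instead verify the formula directly for general $W$ via the identity $W\Psi_\ast W^T=\det(W)\,\Psi_\ast$, which is a pleasant shortcut that avoids the back-translation step; both routes rely on the same rank condition (Theorem~\ref{thm:sufficient-rank}(ii)) for uniqueness.

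The probability computation is where you genuinely depart from the paper.  The paper (Lemma~\ref{lem:ev-prob}) derives the density of $Z=Y_1^{-1}Y_2$ explicitly---it is matrix Cauchy, proportional to $|I+ZZ^T|^{-2}$---by a QR/Wishart change of variables, then passes to the eigenvalue parametrization $Z=PLP^{-1}$ and integrates over the real-eigenvalue region to obtain $\pi/4$.  Your approach is geometric: you recognize the discriminant sign as $-\det G$ for the Gram matrix of $(Y_1,Y_2)$ under the signature-$(2,2)$ polarization of $\det$, push the problem to the oriented Grassmannian $\widetilde{\mathrm{Gr}}(2,4)\cong S^2\times S^2$, check that the induced form has signature $(1,2)$ on each factor, and reduce via Archimedes to an elementary Beta integral.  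This is an elegant alternative that sidesteps the explicit density of $Z$ and the eigenvector Jacobian; the paper's route is more self-contained (no exterior algebra or Grassmannian identifications) and yields the full density of $Z$ as a by-product, which may be of independent interest.
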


\begin{proof}
  We may put $Y_1$ and $Y_2$ in special position through the action of
  $\mathrm{GL}(m,\mathbb{R})\times\mathrm{GL}(m,\mathbb{R})$ discussed
  in Section~\ref{sec:group-action}.  With $A=B^{-1}Y_1^{-1}$, we have
\begin{align*}
  AY_1B &= I_m, \qquad AY_2B = B^{-1}Y_1^{-1}Y_2B.
\end{align*}
Now choose $B$ such that $AY_2B$ becomes the real-valued
Jordan form of $W=Y_1^{-1}Y_2$.
  
Case (iii): If the two eigenvalues are complex then $S_2(2,2)=1$ as
noted in the proof of Corollary~\ref{cor:square-Nu}.  By
Theorem~\ref{thm:Snm1m2} the Kronecker MLE exists uniquely.  In
special form, our data matrices take the form
\begin{align}
  \label{eq:complex-Jordan}
  Y_1= I_2, \qquad  Y_2=
    \begin{pmatrix}
      a  & b \\
      -b & a
    \end{pmatrix}
\end{align}  
with $a,b\in\mathbb{R}$.  Then the negated profile log-likelihood
function from~(\ref{eq:profile-neglik}) is
  \[
    g(\Psi)\;=\; 2\log\left( 1+2a^2+a^4+b^4 + 2 a^2 b^2\; +\; b^2\frac{\|\Psi\|_F^2}{\det(\Psi)}\right).
  \]
  Let $\lambda_1\ge\lambda_2>0$ be the two eigenvalues of the positive
  definite $2\times 2$ matrix $\Psi$.   Then
  \[
    \frac{\|\Psi\|_F^2}{\det(\Psi)} \;=\; \left( \frac{\lambda_1}{\lambda_2}\right)^2\;+\;\left(\frac{\lambda_2}{\lambda_1} \right)^2
  \]
  is minimal when $\lambda_1=\lambda_2$, which occurs if and only if
  $\Psi=\lambda I_2$ for $\lambda>0$.  Translating back to the
  original data yields the claimed formula for the MLE.

  Case (i):  By the diagonalizability assumption, the special form of
  our data matrices is 
 \begin{align*}
  Y_1= I_2, \qquad Y_2=
    \begin{pmatrix}
      a & 0 \\
      0 & b
    \end{pmatrix}
 \end{align*}
  with $a,b\in\mathbb{R}$.  The negated profile log-likelihood
  function from~(\ref{eq:profile-neglik}) now equals
  \begin{equation}
    \label{eq:g:two-real-evs-diag}
   g(\Psi)\;=\;2\log\left((1 + a b)^2  + (a-b)^2\frac{\psi_{11}
     \psi_{22}}{\det(\Psi)}\right).
  \end{equation}
  Let $\rho\equiv\rho(\Psi)=\psi_{12}/\sqrt{\psi_{11}\psi_{22}}$ be
  the correlation.  Then
  \[
    \frac{\psi_{11}
      \psi_{22}}{\det(\Psi)} \;=\; \frac{1}{1-\rho^2} 
  \]
  is minimized uniquely for $\rho=0$.  Hence, the function $g$
  from~(\ref{eq:g:two-real-evs-diag}) is minimized by all diagonal
  matrices.  The likelihood function achieves its maximum but
  not uniquely so.

  Case (ii): The special form of our data matrices is now
  \begin{align*}
    Y_1= I_2, \qquad Y_2=
                 \begin{pmatrix}
                   a & 1 \\
                   0 & a
                 \end{pmatrix}
  \end{align*}
  with $a\in\mathbb{R}$.  The negated profile log-likelihood function
  from~(\ref{eq:profile-neglik}) is
  \begin{equation}
    \label{eq:g:two-real-evs-nondiag}
    g(\Psi)\;=\;2\log\left((1 + a^2)^2  +
      \frac{\psi_{22}^2}{\det(\Psi)}\right)\;>\; 2\log\left((1 + a^2)^2\right)
  \end{equation}
  as $\psi_{22},\det(\Psi)>0$.  If we fix the values $\psi_{11}=1$ and
  $\psi_{12}=0$, and let $\psi_{22}\to 0$, then $g(\Psi)$ converges to
  $2\log((1 + a^2)^2)$.  Hence,
  \[
    2\log((1 + a^2)^2) \;=\; \inf\{ g(\Psi):\Psi\in\mathit{PD}(2)\}
  \]
  but this infimum is not achieved.

  Finally, case (ii) occurs with probability zero as $Y_1^{-1}Y_2$ has
  to have an eigenvalue of
  multiplicity two.  The
  probability of case (i) is found in Lemma~\ref{lem:ev-prob} in the
  appendix.
\end{proof}

The dichotomy from the case of $2\times 2$ matrices disappears for
larger matrices.

\begin{proposition}
  If $n=2$ with square data matrices of size $m\ge 3$, then the
  likelihood function is a.s.~bounded and achieves its maximum, but
  not uniquely so.
\end{proposition}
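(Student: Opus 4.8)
The plan is to use the group action of Lemma~\ref{lem:group-action} to move $(Y_1,Y_2)$ into a canonical form in which the profile function $g$ of~\eqref{eq:profile-neglik} decouples into a sum over small blocks, then to build an explicit critical point of $g$ block by block, and finally to invoke g-concavity.

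Boundedness is immediate: $N_b(m,m)=1$ by Corollary~\ref{cor:n1-square}, so $\ell$ is a.s.\ bounded already at $n=2$. For the rest, work on the generic event that $Y_1$ is invertible and that $W:=Y_1^{-1}Y_2$ has $m$ distinct eigenvalues. With $B$ the change of basis putting $W$ into its real Jordan form $J$ and $A=B^{-1}Y_1^{-1}$, one gets $AY_1B=I_m$ and $AY_2B=J$, and by Lemma~\ref{lem:group-action} this does not change whether the Kronecker MLE exists or is unique. Since the eigenvalues are distinct, $J=\diag(J_1,\dots,J_d)$ is block diagonal, each $J_j$ being either a $1\times1$ block $(\lambda_j)$ from a real eigenvalue or a $2\times2$ block $\begin{pmatrix}\mu_j&\nu_j\\-\nu_j&\mu_j\end{pmatrix}$ with $\nu_j\neq0$ from a complex conjugate pair; because $m\ge3$ there are at least two blocks.

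In these coordinates $g(\Psi)=m\log\det(\Psi+J\Psi J^{T})-m\log\det(\Psi)$. If $\Psi=\diag(\Psi_1,\dots,\Psi_d)$ is block diagonal conforming to $J$, then $\Psi+J\Psi J^{T}$ is block diagonal as well, so $g(\Psi)=\sum_j g_j(\Psi_j)$ with $g_j(\Psi_j)=m[\log\det(\Psi_j+J_j\Psi_j J_j^{T})-\log\det(\Psi_j)]$. On a $1\times1$ block $g_j\equiv m\log(1+\lambda_j^{2})$ is constant, so any $\psi_j>0$ is a critical point; on a $2\times2$ complex block $g_j$ agrees, up to a positive multiplicative constant, with the profile function analyzed in case (iii) of Proposition~\ref{prop:square-2x2}, whose minimizer is unique up to scaling and equals $I_2$ in these Jordan coordinates. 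One can also check directly that $I_2$ solves the block critical-point equation $(\Psi_j+J_j\Psi_j J_j^{T})^{-1}+J_j^{T}(\Psi_j+J_j\Psi_j J_j^{T})^{-1}J_j=\Psi_j^{-1}$, using $J_jJ_j^{T}=(\mu_j^{2}+\nu_j^{2})I_2$. Now assemble $\Psi^{*}=\diag(\Psi_1^{*},\dots,\Psi_d^{*})$ with arbitrary positive scalars on the $1\times1$ blocks and the above minimizers on the $2\times2$ blocks. The gradient $\nabla g(\Psi^{*})=m\bigl[(\Psi^{*}+J\Psi^{*}J^{T})^{-1}+J^{T}(\Psi^{*}+J\Psi^{*}J^{T})^{-1}J-(\Psi^{*})^{-1}\bigr]$ is block diagonal, since $J$ and $\Psi^{*}$ are, and it vanishes on each block by the one- and two-dimensional computations; hence $\nabla g(\Psi^{*})=0$. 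By Lemma~\ref{lem:gconvex} every critical point of $g$ is a global minimizer, so $g$ attains its minimum at $\Psi^{*}$ and the Kronecker MLE exists.

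Non-uniqueness comes out of the same construction: each $g_j$ is invariant under scaling of its argument, and with at least two blocks the family $\diag(c_1\Psi_1^{*},\dots,c_d\Psi_d^{*})$, $c_1,\dots,c_d>0$, consists entirely of global minimizers of $g$, while unequal choices of the $c_j$ alter the resulting Kronecker product $\Psi\otimes\Psi_1(\Psi)$ (a single common factor being absorbed by~\eqref{eq:scaling}); alternatively one may simply quote $N_u(m,m)=3$ from Corollary~\ref{cor:square-Nu}. I expect the one point that needs care is the passage to block-diagonal $\Psi$: instead of arguing a priori that some minimizer of $g$ is block diagonal, the argument produces a block-diagonal \emph{critical} point and lets Lemma~\ref{lem:gconvex} upgrade it to a global maximizer, so the only genuine computation left is checking, in Jordan coordinates, that the $2\times2$ complex-block minimizer of case (iii) is $I_2$ (up to scaling) and solves the block critical-point equation.
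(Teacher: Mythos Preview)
Your proof is correct and follows essentially the same route as the paper: reduce via the group action to $Y_1=I_m$, $Y_2=J$ in real Jordan form, exploit the block-diagonal structure to show that block-diagonal $\Psi$ with each block a multiple of the identity are critical points of $g$ (using the $2\times2$ complex-block computation from Proposition~\ref{prop:square-2x2}(iii)), and then invoke g-concavity via Lemma~\ref{lem:gconvex}. The paper phrases the vanishing of the derivative in terms of the differential $\mathrm{d}g(\Psi_0;U)$ rather than the gradient matrix, but the content is identical.
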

\begin{proof}
  We prove the claim by exhibiting an at least two-dimensional set of
  critical points, which must all be global optima by
  Lemma~\ref{lem:gconvex}.

  As in the proof of Proposition~\ref{prop:square-2x2}, assume that
  $Y_1=I_m$ is the identity and that $Y_2$ is in real Jordan form for
  the almost surely occurring case of all eigenvalues being distinct.
  Then $Y_2$ is block-diagonal with blocks of size 1 or 2; the
  $2\times 2$ blocks are as in~(\ref{eq:complex-Jordan}).  As the
  matrix size is $m\ge 3$, there are $k\ge 2$ blocks, which we denote
  by $Y_{21},\dots,Y_{2k}$.  Let $b_1,\dots,b_k\in\{1,2\}$ be the
  sizes of these blocks.

  The profile function we minimize is
  \begin{equation}
    \label{eq:square-g-Jordan}
    g(\Psi)\;=\;m\log\det\left(\Psi+Y_2\Psi Y_2^T
    \right)-m\log\det(\Psi), \qquad \Psi\in\mathit{PD}(m).
  \end{equation}
  For each block define an analogous function
  \begin{equation*}
    g_l(\Psi_{l})=b_l\log\det\left(\Psi_l+Y_{2l}\Psi_l Y_{2l}^T
    \right)-b_l\log\det(\Psi_l), \qquad \Psi_l\in\mathit{PD}(b_l).
  \end{equation*}
  The logarithm of the determinant has differential
  \[
    \mathrm{d}\log\det(\Psi) = \tr\left( \Psi^{-1} \mathrm{d}\Psi\right).
  \]
  It follows that the differential of $g$
  in~(\ref{eq:square-g-Jordan}) is
  \begin{equation*}
    \mathrm{d}g(\Psi;U) 
    = \\
    m\tr\left\{ \left( \Psi+Y_2\Psi Y_2^T\right)^{-1}U
    \right\} - m \tr\left( \Psi^{-1}U\right).
  \end{equation*}
  As candidates, consider block-diagonal matrices
  $\Psi_0$ with $k$ blocks $\Psi_{01},\dots,\Psi_{0k}$ of sizes
  $b_1,\dots,b_k$, respectively.  Then $\Psi_0+Y_2\Psi_0 Y_2^T$ is
  block-diagonal, and we have
  \begin{equation*}
    \frac{1}{m}\mathrm{d}g(\Psi_0;U) = \sum_{l=1}^k
    \frac{1}{b_l}\mathrm{d}g_l(\Psi_{0l};U_l).
  \end{equation*}
  Now, take each block of $\Psi_0$ to be a multiple of the identity,
  so $\Psi_{0l}=\lambda_l I_{b_l}$ for $l=1,\dots,k$.  If $b_l=1$,
  then $\mathrm{d}g_l(\Psi_{0l};U_l)=0$ trivially because $g_l$ is
  then constant. If $b_l=2$, then $\mathrm{d}g_l(\Psi_{0l};U_l)=0$ as
  we showed in the proof of Proposition~\ref{prop:square-2x2} that
  $g_l$ is minimized by multiples of $I_2$.  We conclude that
  block-diagonal matrices with blocks equal to multiples of the
  identity are critical points.  As there are $k\ge 2$ blocks the
  critical points we exhibited form a set of dimension at least
  2. Hence, the likelihood function achieves its maximum, but not
  uniquely so.
\end{proof}

\begin{corollary}
  \label{cor:Nb-square}
  The existence threshold of square matrices of size $m\ge 2$ is\\ $N_e(m,m)=1$.
\end{corollary}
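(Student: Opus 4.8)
The plan is to assemble the corollary from the case analysis of this section together with the earlier results. The three thresholds are positive integers and, by Proposition~\ref{prop:known-results}, satisfy $N_b(m,m)\le N_e(m,m)\le N_u(m,m)$; since Corollary~\ref{cor:n1-square} gives $N_b(m,m)=1$ and Corollary~\ref{cor:square-Nu} gives $N_u(m,m)=3$, it suffices to show that for every sample size $n\ge 1$ the Kronecker MLE exists almost surely, which then forces $N_e(m,m)=1$.

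First I would handle $n=1$: by Proposition~\ref{prop:square-n1}, for an invertible data matrix $Y_1$ (as occurs a.s.) the profile function $g$ of~\eqref{eq:profile-neglik} is constant and hence trivially attains its minimum on $\mathit{PD}(m)$, so the Kronecker MLE exists by Lemma~\ref{lem:profile-likelihood}. For $n=2$ I would invoke the two propositions just established: when $m=2$, Proposition~\ref{prop:square-2x2} shows that almost surely one is in case~(i) or case~(iii), in each of which the maximum is attained (uniquely in case~(iii)), the non-attaining case~(ii) having probability zero; when $m\ge 3$, the proposition immediately above exhibits an at least two-dimensional set of critical points, all of which are global maxima by Lemma~\ref{lem:gconvex}, so the maximum is attained a.s. Finally, for $n\ge 3$ existence is immediate from $N_u(m,m)=3$ (Corollary~\ref{cor:square-Nu}), which yields a.s.~unique existence and in particular existence.

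Putting the three regimes together, the Kronecker MLE exists a.s.~for all $n\ge 1$, hence $N_e(m,m)=1$. Every ingredient is already in hand, so there is no substantive obstacle; the only point requiring care is to note that the exceptional non-existence in case~(ii) of Proposition~\ref{prop:square-2x2} occurs only on a probability-zero event and therefore does not interfere with the a.s.~statement used to define the threshold.
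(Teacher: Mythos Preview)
Your proof is correct and is precisely the argument the paper intends; the corollary is stated without an explicit proof because it follows immediately from the preceding case analysis (Proposition~\ref{prop:square-n1} for $n=1$, Proposition~\ref{prop:square-2x2} and the subsequent proposition for $n=2$, and Corollary~\ref{cor:square-Nu} for $n\ge 3$).
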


\section{Rectangular matrices}
\label{sec:rectangular-matrices}

In this section we consider $n=2$ rectangular matrices $Y_1$ and $Y_2$ of size $m_1\times m_2$ with $m_1>m_2$.  As discussed in
Section~\ref{sec:introduction}, the nontrivial case is then
$n m_2=2 m_2>m_1> m_2$.
For this case, we derive explicit solutions for the minimal rank
$r_2(m_1,m_2,k)$ in (\ref{eq:minimal-rank}) and $S_2(m_1,m_2)$ in
(\ref{eq:minimal-rank-summary}).

\subsection{Kronecker canonical form}

As discussed in Section~\ref{sec:group-action}, our problem is
invariant with respect to the group action $Y_i \mapsto A Y_i B$,
$(A,B)\in\mathrm{GL}(m_1,\mathbb{R})\times\mathrm{GL}(m_2,\mathbb{R})$.  The theorem below
states that when $Y_1$ and $Y_2$ are generic, by choosing $A$ and $B$
appropriately (depending on the data $Y_i$), the problem can be reduced
into a simplified canonical form.

In the sequel, we write $0_{k,l}$ for the $k\times l$ matrix with all entries zero.  

\begin{theorem}
\label{thm:tenberge}
Let $Y_1$ and $Y_2$ be generic rectangular matrices of size $m_1\times
m_2$ with $2 m_2>m_1>m_2$.   
There exist $A\in\mathrm{GL}(m_1,\mathbb{R})$ and $B\in\mathrm{GL}(m_2,\mathbb{R})$ such that
\begin{equation}
\label{eq:AYB}
 A Y_1 B = \begin{pmatrix} I_{m_2} \\ 0_{m_1-m_2, m_2} \end{pmatrix}, \qquad
 A Y_2 B = \begin{pmatrix} 0_{m_1-m_2, m_2} \\ I_{m_2} \end{pmatrix}.
\end{equation}
\end{theorem}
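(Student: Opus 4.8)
The plan is to read the pair $(Y_1,Y_2)$, modulo the action $Y_i\mapsto AY_iB$ of Section~\ref{sec:group-action}, as the matrix pencil $\lambda Y_1+\mu Y_2$ modulo strict equivalence, and to show that the $\mathrm{GL}(m_1,\mathbb{R})\times\mathrm{GL}(m_2,\mathbb{R})$-orbit of the canonical pair \eqref{eq:AYB} is Zariski-dense in $(\mathbb{R}^{m_1\times m_2})^2$, so that a generic pair lies in it. Since this orbit is the image of the irreducible group $\mathrm{GL}(m_1,\mathbb{R})\times\mathrm{GL}(m_2,\mathbb{R})$ under a polynomial map, it is irreducible and constructible, and all fibres of the orbit map are cosets of the stabilizer, hence of the same dimension; thus the orbit has dimension $m_1^2+m_2^2-\dim(\text{stab})$, and it is dense precisely when this equals $2m_1m_2=\dim(\mathbb{R}^{m_1\times m_2})^2$, i.e.\ precisely when the stabilizer of \eqref{eq:AYB} has dimension $(m_1-m_2)^2$. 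So everything reduces to this single dimension computation.

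To organize the computation I would first put \eqref{eq:AYB} into block-diagonal form by permuting its rows and columns (permutation matrices lie in $\mathrm{GL}$, so Lemma~\ref{lem:group-action} applies). Writing $d:=m_1-m_2$, column $c\in\{1,\dots,m_2\}$ of $\lambda Y_1+\mu Y_2$ carries $\lambda$ in row $c$ and $\mu$ in row $c+d$; hence collecting the columns in a fixed residue class $r$ modulo $d$ together with the rows they meet yields a sub-pencil equal to the Kronecker block $L_{\eta_r}^T$ of size $(\eta_r{+}1)\times\eta_r$ with $\lambda$-part $\binom{I_{\eta_r}}{0}$ and $\mu$-part $\binom{0}{I_{\eta_r}}$, where $\eta_r=\#\{1\le j\le m_2:\ j\equiv r\pmod d\}$. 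These row sets are disjoint and exhaust $\{1,\dots,m_1\}$ and the column sets partition $\{1,\dots,m_2\}$, so \eqref{eq:AYB} is strictly equivalent to $\bigoplus_{r=1}^d L_{\eta_r}^T$; and since $1\le d<m_2$, the exponents satisfy $\eta_r\ge 1$ and $|\eta_r-\eta_s|\le 1$, i.e.\ $(\eta_1,\dots,\eta_d)$ is the most even partition of $m_2$ into $d$ parts.

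I would then compute the stabilizer dimension block-by-block. The stabilizer of a pencil $M$ under the action has Lie algebra isomorphic to the endomorphism space of $M$ in the category of matrix pencils, so for $M=\bigoplus_r L_{\eta_r}^T$ one gets $\dim(\text{stab})=\sum_{i,j}\dim\mathrm{Hom}(L_{\eta_i}^T,L_{\eta_j}^T)$. For Kronecker blocks of this type $\dim\mathrm{Hom}(L_a^T,L_b^T)=\max(0,\,b-a+1)$; in particular $\dim\mathrm{Hom}(L_a^T,L_a^T)=1$ and $\dim\mathrm{Hom}(L_a^T,L_{a+1}^T)+\dim\mathrm{Hom}(L_{a+1}^T,L_a^T)=2$. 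Summing over all ordered pairs of the $d$ balanced exponents (say $s$ of them equal to $q+1$ and $d-s$ equal to $q$) gives $s^2+(d-s)^2+2s(d-s)=d^2=(m_1-m_2)^2$, as required. Alternatively the same count can be obtained directly: $AY_1B=Y_1$ forces $B=A_{11}^{-1}$ and $A$ block upper triangular with invertible diagonal blocks, and then $AY_2B=Y_2$ determines $A_{12}$ and $A_{22}$ from $A_{11}$ and constrains $A_{11}$ to an explicit $d^2$-dimensional linear family.

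I expect the main obstacle to be exactly this stabilizer/$\mathrm{Hom}$ computation, i.e.\ verifying that the ``balanced'' pencil type \eqref{eq:AYB} is the one with the smallest stabilizer (equivalently, the open-orbit generic type among all $m_1\times m_2$ pencils); I would settle it via the $\mathrm{Hom}$-dimension formula above, via the direct bookkeeping on $A_{11}$, or by quoting the degeneration theory for orbits of matrix pencils under strict equivalence. A final point to dispatch is that the real Kronecker canonical form can in principle contain real Jordan blocks for complex-conjugate eigenvalues; this never arises here, because a generic tall pencil has full column rank $m_2$ at every $[\lambda:\mu]$ — the incidence locus of $(Y_1,Y_2,[\lambda:\mu],v)$ with $(\lambda Y_1+\mu Y_2)v=0$, $v\ne 0$, projects onto a subvariety of $(\mathbb{R}^{m_1\times m_2})^2$ of dimension $2m_1m_2+m_2-m_1<2m_1m_2$ — so a generic pencil has no regular part and the whole argument stays over $\mathbb{R}$.
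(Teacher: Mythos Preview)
Your argument is correct. The paper itself does not actually prove this theorem: its proof consists of the single sentence ``This is a variation of the Kronecker canonical form; for constructive proofs, see \cite{tenberge-kiers:1999} and \cite[Theorem 5.1.8]{murota},'' together with Remark~\ref{remark:kroneckerform} identifying the block structure. So you are supplying something the paper merely outsources.

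Your route is genuinely different from the cited constructive ones. Rather than build $A$ and $B$ explicitly from $Y_1,Y_2$, you show that the $\mathrm{GL}(m_1,\mathbb{R})\times\mathrm{GL}(m_2,\mathbb{R})$-orbit of the target pair \eqref{eq:AYB} is dense by computing its stabilizer dimension. The identification of \eqref{eq:AYB} with the balanced sum $\bigoplus_{r=1}^{d} L_{\eta_r}^T$ via residue classes mod $d=m_1-m_2$ is exactly the permutation asserted in Remark~\ref{remark:kroneckerform}, and your endomorphism count $\sum_{i,j}\max(0,\eta_j-\eta_i+1)=d^2$ is the standard Kronecker-quiver Hom formula, so the stabilizer has dimension $(m_1-m_2)^2$ and the orbit has the full dimension $2m_1m_2$. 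Your final paragraph is the step that makes the argument honest over $\mathbb{R}$: showing that generic tall pencils have full column rank at every $[\lambda:\mu]$ rules out any regular (Jordan) part, so the real and complex Kronecker forms coincide and the dense complex orbit restricts to a Zariski-dense real orbit. What the constructive proofs buy is an explicit algorithm to produce $A,B$; what your approach buys is a conceptually clean explanation of \emph{why} the balanced block sizes are the generic ones (smallest endomorphism algebra, hence largest orbit), and a direct link to the quiver-representation viewpoint that is also exploited in later work such as \cite{makam}.
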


\begin{proof}
This is a variation of the Kronecker canonical form (see Remark~\ref{remark:kroneckerform}). 
For constructive proofs, see \cite{tenberge-kiers:1999} and \cite[Theorem 5.1.8]{murota}.
\end{proof}

\begin{remark}
\label{remark:kroneckerform}
Let
\[
 U_l=\begin{pmatrix}
 I_l \\
 0_{1,l}
 \end{pmatrix}\in\mathbb{R}^{(l+1)\times l}, \qquad
 L_l=\begin{pmatrix}
 0_{1,l} \\
 I_l
 \end{pmatrix}\in\mathbb{R}^{(l+1)\times l}.
\]
It is known that for generic matrices $Y_1$ and $Y_2$ in Theorem~\ref{thm:tenberge}, there exist $A\in\mathrm{GL}(m_1,\mathbb{R})$ and $B\in\mathrm{GL}(m_2,\mathbb{R})$ such that
\begin{equation}
\label{eq:AYB'}
 A Y_1 B = \mathrm{diag}(\underbrace{U_{l+1},\ldots,U_{l+1}}_{n_a},\underbrace{U_{l},\ldots,U_{l}}_{n_b}) , \qquad
 A Y_2 B =  \mathrm{diag}(\underbrace{L_{l+1},\ldots,L_{l+1}}_{n_a},\underbrace{L_{l},\ldots,L_{l}}_{n_b}),
\end{equation}
where $l$, $n_a$ and $n_b$ are functions of $(m_1,m_2)$ defined in (\ref{eq:l}) and (\ref{eq:nanb}); see \cite[Section 3.3]{edelman:1997}.
The pair of block diagonal matrices in (\ref{eq:AYB'}) is referred to as the Kronecker canonical form. 
We easily see that the form in (\ref{eq:AYB}) may be obtained from that in (\ref{eq:AYB'}) by permuting rows and columns.
\end{remark}

  In the remainder of this section, we assume without loss of
  generality that $Y_1$ and $Y_2$ are already in the canonical form in
  (\ref{eq:AYB}).  The minimal rank (\ref{eq:minimal-rank}) we will
  determine then becomes 
\begin{align*}
 r_2(m_1,m_2,k)
 \;=&\; \min\left\{\rk\begin{pmatrix} X Y_1^T \\ X Y_2^T \end{pmatrix} : X\in\mathbb{R}^{k\times m_2},\; \rk(X)=k \right\} \\
 \;=&\; \min\left\{\rk \XOOX_{2 k\times m_1} : X\in\mathbb{R}^{k\times m_2},\; \rk(X)=k \right\},
\end{align*}
where for ease of presentation the matrix whose rank we consider has
been transposed.

\subsection{Gr\"obner basis computation}  

When $m_2$ is small, the minimal rank $r_2(m_1,m_2,k)$ can be found by algebraic computations.
Since
\begin{align*}
 \rk\XOOX
 =
\rk\left(\!\begin{array}{c}\begin{array}{c|c}S X\ \ & 0 \end{array} \\ \hline \begin{array}{c|c}0 &\ \ S X\end{array}\end{array}\!\right), \quad S\in\mathrm{GL}(k,\mathbb{R}),
\end{align*}
we can set $k$ columns of $X\in\mathbb{R}^{k\times m_2}$ to form an identity matrix.  We thus proceed through the following steps:
\begin{itemize}
\item[Step 1.]
For each set $\{l_1,\ldots,l_k\}$ with $1\le l_1<\cdots< l_k\le m_2$, repeat (i) and (ii) below:

\begin{itemize}
\item[(i)]
Let
\[
 X =
 \begin{pmatrix}
 x_{11} & \cdots & x_{1 m_2} \\
 \vdots &        & \vdots \\
 x_{k1} & \cdots & x_{k m_2}
 \end{pmatrix}
 \ \mbox{with}\ %
 \begin{pmatrix}
 x_{1 l_1} & \cdots & x_{1 l_k} \\
 \vdots &        & \vdots \\
 x_{k l_1} & \cdots & x_{k l_k}
 \end{pmatrix}
=
 \begin{pmatrix}
 1 &        & 0 \\
   & \ddots \\
 0 &        & 1
 \end{pmatrix}.
\]

\item[(ii)]
For $i=0,1,\ldots$, try to solve the polynomial system
\begin{equation}
\label{eq:minors}
 \mbox{all $(2k-i)\times (2k-i)$ minors of } 
 \left(\begin{array}{ccc} \multicolumn{2}{c}{X} & 0_{k,m_1-m_2} \\ 0_{k,m_1-m_2} & \multicolumn{2}{c}{X} \end{array}\right)
= 0
\end{equation}
by computing and inspecting a Gr\"{o}bner basis.
If a real solution $X^*$ exists for $i=i^*$ but not for $i=i^*+1$, let
\begin{equation}
\label{eq:min-rank}
  \mathtt{Rank}[\{l_1,\ldots,l_k\}]:=\rk\left(\!\begin{array}{c}\begin{array}{c|c} X^*\ \ & 0 \end{array} \\ \hline \begin{array}{c|c}0 &\ \ X^*\end{array}\end{array}\!\right).
\end{equation}
\end{itemize}

\item[Step 2.]
Take the minimum for all possible $1\le l_1<\cdots<l_k\le m_2$:
\[
 r_2(m_1,m_2,k) = \min_{1\le l_1<\cdots<l_k\le m_2} \mathtt{Rank}[\{l_1,\ldots,l_k\}].
\]
\end{itemize}

\begin{example}
Suppose $m_1=5$, $m_2=3$, $k=2$.
For $\{l_1,l_2\}=\{1,3\}$,
$X=\left(\begin{smallmatrix} 1 & x_{12} & 0 \\ 0 & x_{22} & 1 \end{smallmatrix}\right)$,
and
\begin{equation}
  \label{eq:GB-example}
  \XOOX_{4\times 5} =
\begin{pmatrix} 1 & x_{12} & 0 & 0 & 0 \\ 0 & x_{22} & 1 & 0 & 0 \\ 0 & 0 & 1 & x_{12} & 0 \\ 0 & 0 & 0 & x_{22} & 1 \end{pmatrix}.
\end{equation}  
The first, third, and fifth column of this $4\times 5$ matrix are linearly independent so its rank cannot drop below 3.  This is reflected in the $3\times 3$ minors being $\{x_{12},x_{22},x_{12}^2,x_{22}^2,x_{12} x_{22},1,0\}$, with Gr\"{o}bner basis $\{1\}$ and no solution (real or complex) for  (\ref{eq:minors}) when $i=1$.  However, for $i=0$, the set of $4\times 4$ minors of the matrix
is
$\{x_{12},x_{22},x_{12}^2,x_{22}^2,x_{12} x_{22} \}$ with a Gr\"{o}bner basis being $\{x_{12},x_{22}\}$.  This confirms that $x_{12}=x_{22}=0$ is the (evident) solution for (\ref{eq:minors}) when $i=0$.  Our procedure concludes
\begin{equation*}
 \mathtt{Rank}[\{1,3\}]=\rk \XOOX \bigg|_{x_{12}=x_{22}=0}
= \rk\left(\!\begin{array}{c} \begin{array}{ccc|cc} 1 & 0 & 0 & 0 & 0 \\ 0 & 0 & 1 & 0 & 0 \end{array} \\ \hline \begin{array}{cc|ccc} 0 & 0 & 1 & 0 & 0 \\ 0 & 0 & 0 & 0 & 1 \end{array}\end{array}\!\right)
 =3 < 2k=4.
\end{equation*}
We observe a drop in rank.  For the other combinations $\{l_1,l_2\}=\{1,2\}$ and $\{2,3\}$, no rank drop occurs, i.e., $\mathtt{Rank}[\{l_1,l_2\}]=4$.
Hence, $r_2(m_1,m_2,k)=r_2(5,3,2)=3$.
\end{example}

In the example just given a well-devised 0-1 matrix $X$ attains the minimal rank.
We shall see that such a matrix exists for general $(m_1,m_2,k)$; see the construction in (\ref{eq:XX}).

\subsection{Evaluation of the minimal rank} 

Our strategy to determine $r_2(m_1,m_2,k)$ is to first provide an upper bound by specifying a special 0-1 matrix $X$.  We then prove that no other matrix can achieve lower rank than $X$.  In order to state our results, some further notation is needed.

Let
\begin{align}
\label{eq:l}
 l(m_1,m_2) =& \max\{ l\in\mathbb{N} \mid (l+1) m_2 - l m_1 > 0 \} 
= \Bigl\lceil \frac{m_2}{m_1-m_2}\Bigr\rceil-1 \ge 1.
\end{align}
Based on the value $l(m_1,m_2)$,
the set of pairs $(m_1,m_2)$ of interest is disjointly divided as
\begin{multline*}
 \bigl\{ (m_1,m_2) \mid m_1 < 2 m_2 \bigr\}
= \bigsqcup_{l\ge 1} \bigl\{ (m_1,m_2) \mid l=l(m_1,m_2) \bigr\} \\
= \bigsqcup_{l\ge 1} \bigl\{ (m_1,m_2) \mid (l+1) m_2 - l m_1 > 0, \ (l+1) m_1 - (l+2) m_2 \ge 0 \bigr\}.
\end{multline*}
Let
\begin{equation}
\label{eq:nanb}
 n_a = (l+1)m_2-l m_1 >0, \quad n_b = (l+1)m_1-(l+2)m_2 \ge 0 \quad \mbox{with }l=l(m_1,m_2).
\end{equation}
Now we partition the columns of $X$ as
\begin{equation*}
 X = \Bigl(\underbrace{X_1}_{n_a},\underbrace{X_2}_{n_b},\underbrace{X_3}_{n_a},\underbrace{X_4}_{n_b},\ldots,\underbrace{X_{2l+1}}_{n_a}\Bigr)_{k\times m_2}.
\end{equation*}
Indeed, the number of columns of $X$ is
\[
 (l+1) n_a + l n_b = m_2.
\]
Accordingly,
\begin{equation}
\label{eq:I2XY}
\XOOX_{2k\times m_1}
 = \begin{pmatrix} X_1 & X_2 & X_3 & X_4 & \cdots & X_{2l+1} & 0 & 0 \\
 0 & 0 & X_1 & X_2 & \cdots & X_{2l-1} & X_{2l} & X_{2l+1} \end{pmatrix}_{2k\times m_1}.
\end{equation}

\begin{theorem}
\label{thm:r2}
For given $(m_1,m_2)$ with $2m_2> m_1>m_2$, let $l=l(m_1,m_2)$, $n_a$, and $n_b$ be defined as in (\ref{eq:l}) and (\ref{eq:nanb}).  Then the minimal rank
$r_2(m_1,m_2,k)$ is the solution of the integer programming problem
\begin{equation}
\label{eq:r2}
 r_2(m_1,m_2,k) = 
 \min\bigl\{a_1+b_1+k \mid k\le a_1 (l+1) + b_1 l,\ 0\le a_1\le n_a,\ 0\le b_1\le n_b\bigr\},
\end{equation}
where $a_1$ and $b_1$ are non-negative integers.
\end{theorem}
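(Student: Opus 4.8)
The plan is to establish \eqref{eq:r2} by proving two inequalities: that the optimal value of the integer program is an upper bound for $r_2(m_1,m_2,k)$, via an explicit matrix $X$, and that it is also a lower bound, i.e.\ no matrix $X$ of rank $k$ makes $\rk\XOOX_{2k\times m_1}$ smaller. For the upper bound, fix a feasible $(a_1,b_1)$ attaining the minimum in \eqref{eq:r2}, so that $k\le a_1(l+1)+b_1l$, $a_1\le n_a$, $b_1\le n_b$ (and $a_1+b_1\le k$, which always holds at the optimum). Work with $Y_1,Y_2$ in the Kronecker canonical form \eqref{eq:AYB'} and split the columns of $X$ according to its $n_a$ blocks $U_{l+1}$ and $n_b$ blocks $U_l$. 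Declare $a_1$ blocks of the first type and $b_1$ of the second type \emph{active}, choose integers $1\le t_c\le(\text{number of columns of block }c)$, one per active block, with $\sum_c t_c=k$ (possible exactly because $k\le a_1(l+1)+b_1l$), and define $X$ (the matrix referred to in \eqref{eq:XX}) by placing $t_c$ distinct standard basis vectors of $\mathbb{R}^k$ into the leading columns of active block $c$, so that $e_1,\dots,e_k$ are each used exactly once across all active blocks, and by zeroing every remaining column. Then $\rk X=k$, since the active columns already span $\mathbb{R}^k$. Using $U_m^\top=(I_m\ 0)$ and $L_m^\top=(0\ I_m)$, the column group of $\XOOX_{2k\times m_1}$ coming from active block $c$ is a block $\left(\begin{smallmatrix}Z&0\\0&Z\end{smallmatrix}\right)$ with $Z=(e_{i_1},\dots,e_{i_{t_c}},0,\dots,0)$; a one-line calculation shows this block has rank $t_c+1$ and lies inside the coordinate subspace spanned by the top and bottom copies of $e_{i_1},\dots,e_{i_{t_c}}$. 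As distinct active blocks use disjoint basis vectors and inactive blocks contribute nothing, $\rk\XOOX_{2k\times m_1}=\sum_c(t_c+1)=k+a_1+b_1$, which is the claimed upper bound.

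For the lower bound, let $X\in\mathbb{R}^{k\times m_2}$ be arbitrary of rank $k$ and let $R$ be its row space. Since $Y_1^\top,Y_2^\top$ have full rank $m_2$, the maps $r\mapsto rY_i^\top$ are injective, so $\rk\XOOX_{2k\times m_1}=\dim(RY_1^\top+RY_2^\top)=2k-\dim(RY_1^\top\cap RY_2^\top)$. Thus it suffices to show $\dim(RY_1^\top\cap RY_2^\top)\le k-\mu$, where $\mu$ denotes the optimal value of \eqref{eq:r2} minus $k$, i.e.\ $\mu=\min\{a_1+b_1:(a_1,b_1)\text{ feasible}\}$. In the form \eqref{eq:AYB'}, a vector lies in $RY_1^\top\cap RY_2^\top$ exactly when, block by block, it equals $(r^{(c)},0)=(0,r'^{(c)})$ for some $r,r'\in R$; eliminating these relations shows this intersection is parametrized by interior data $u\in\bigoplus_c\mathbb{R}^{m_c-1}$ subject to $\iota_0(u)\in R$ and $\iota_1(u)\in R$, where $\iota_0$ and $\iota_1$ prepend, resp.\ append, a zero in each block and $\iota_0(u)=\tau^\ast\iota_1(u)$ for the per-block right shift $\tau^\ast$, which is nilpotent with $(\tau^\ast)^{l+1}=0$ because every block has at most $l+1$ columns. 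Writing $\widetilde U=\iota_1(U)$ with $U$ the space of admissible $u$, one gets $\widetilde U+\tau^\ast\widetilde U\subseteq R$, $\dim\widetilde U=\dim(RY_1^\top\cap RY_2^\top)$, and $\widetilde U$ supported away from the last slot of each block, so $\dim(\widetilde U+\tau^\ast\widetilde U)\le k$. A counting argument on this nilpotent structure — tracking, block by block (column-lengths $l$ or $l+1$), how much shifted overlap $R$ can carry before extra independent directions in $R$ are forced — then yields $\dim U\le k-\mu$, completing the proof.

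I expect the lower bound to be the main obstacle, specifically the final counting step: converting the structural fact ``$\widetilde U+\tau^\ast\widetilde U\subseteq R$ with $(\tau^\ast)^{l+1}=0$ and block column-lengths in $\{l,l+1\}$'' into the sharp bound $\dim(RY_1^\top\cap RY_2^\top)\le k-\mu$. This is the place where the precise values $l(m_1,m_2)$, $n_a$, $n_b$ enter, and where one must rule out the cleverness of choosing $R$ to share shifted subspaces across several Kronecker blocks at once; the argument has to cover uniformly the small-$k$ regime, where $\mu$ behaves like $\lceil k/(l+1)\rceil$, and the large-$k$ regime, where the box constraints $a_1\le n_a$ and $b_1\le n_b$ become binding.
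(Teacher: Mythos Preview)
Your upper bound is correct and is essentially the paper's construction, transported from the alternating column form \eqref{eq:AYB} to the block-diagonal Kronecker form \eqref{eq:AYB'}; the paper builds the same $0$--$1$ matrix $X$ in \eqref{eq:XX} and computes the same rank $a_1+b_1+k$.

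The lower bound is where your proposal has a genuine gap. Your setup is valid: with $R$ the row space of $X$, one has $\rk\XOOX_{2k\times m_1}=2k-\dim(RY_1^\top\cap RY_2^\top)$, and your parametrization of the intersection by $U=\{u:\iota_0(u),\iota_1(u)\in R\}$ with $\iota_0=\tau^\ast\iota_1$ is correct. But the final ``counting argument'' you defer is the entire content of the lower bound, and you have not supplied it. The single inclusion $\widetilde U+\tau^\ast\widetilde U\subseteq R$ together with the support constraint on $\widetilde U$ does not give $\dim\widetilde U\le k-\mu$ by a one-step dimension count; to get the sharp bound you would need to iterate the shift, track how the images $\widetilde U,\tau^\ast\widetilde U,(\tau^\ast)^2\widetilde U,\ldots$ interact inside $R$ across the $n_a$ long blocks and $n_b$ short blocks, and handle the box constraints $a_1\le n_a$, $b_1\le n_b$. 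You correctly flag this as the obstacle, but as written the proof is incomplete at exactly this point.

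For comparison, the paper's lower bound does not go through the intersection $RY_1^\top\cap RY_2^\top$ at all. It works in the alternating coordinates \eqref{eq:I2XY}, first separates the odd-indexed subblocks $X_1,X_3,\ldots,X_{2l+1}$ from the even-indexed ones $X_2,\ldots,X_{2l}$ by a row operation, and then runs an explicit peeling procedure: at step $i$ it records $a_i=\rk X^{(i)}_{2i-1}$, row-reduces so that this leading block becomes $\bigl(\begin{smallmatrix}I_{a_i}\\0\end{smallmatrix}\bigr)$, and drops to a smaller matrix of the same shape. This produces integers $a_1,\ldots,a_{l+1}\le n_a$ with $\sum_i a_i=k_a$ and the chain of inequalities $r_a^{(i)}\ge a_i+r_a^{(i+1)}$, $r_a^{(i)}\ge a_i+(k_a-\sum_{j<i}a_j)$, which combine to $r_a^{(1)}\ge\max_i a_i+k_a$; likewise for the $b_j$. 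Summing gives $\rk\XOOX\ge\max_i a_i+\max_j b_j+k$, and minimizing over the constraints these sequences satisfy recovers exactly the integer program \eqref{eq:r2}. If you want to push your shift-based approach through, that iterative peeling is the missing ingredient you would have to reproduce in your language.
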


\begin{proof}
{[Upper bound]}
We first show that the right-hand side of (\ref{eq:r2}) is an upper bound on $r_2(m_1,m_2,k)$.  To do this, we specify a particular matrix $X$ that gives a rank equal to the right-hand side of (\ref{eq:r2}).

Let $n_a\ge a_1\ge\cdots\ge a_{l+1}\ge 0$ and $n_b\ge b_1\ge\cdots\ge b_{l}\ge 0$ such that $k_a=\sum_{j=1}^{l+1} a_j$, $k_b=\sum_{j=1}^{l} b_j$ with $k_a+k_b=k$.
For integer $i\ge 1$, define the matrices 
\begin{equation}
\label{eq:XX}
 X_{2i-1} =
 \left( \begin{array}{l} 0_{\sum_{j=1}^{i-1} a_{j} , n_a} \\[3mm] \bigl(I_{a_i},0_{a_i,n_a-a_i}\bigr) \\[2mm] 0_{\sum_{j=i+1}^{l+1} a_{j}, n_a} \\[3mm] 0_{k_b, n_a} \end{array} \right),  
 \qquad
 X_{2i} =
 \left( \begin{array}{l} 0_{k_a, n_b} \\[2mm] 0_{\sum_{j=1}^{i-1} b_{j}, n_b} \\[3mm] \bigl(I_{b_i},0_{b_i, n_b-b_i}\bigr) \\[2mm] 0_{\sum_{j=i+1}^{l} b_{j}, n_b} \end{array}\right). 
\end{equation}
The matrices $X_{2i-1}$ are of size $k\times n_a$ and defined for $i\le l+1$.  The matrices $X_{2i}$ are of size $k\times n_b$ and defined for $i\le l$.
Then,
\begin{equation*}
 \rk\begin{pmatrix} X_1 & X_2 & X_3 & X_4 & \cdots & X_{2l+1} & 0 & 0 \\
 0 & 0 & X_1 & X_2 & \cdots & X_{2l-1} & X_{2l} & X_{2l+1} \end{pmatrix}_{2k\times m_1} =
 a_1+b_1+k.
\end{equation*}
We may now minimize this rank $a_1+b_1+k$ by varying $a_1,\dots,a_{l+1}$ and $b_1,\dots,b_l$.  Our claim is then that this optimization over $2l+1$ variables gives the minimum on the right-hand side of (\ref{eq:r2}).  To show this, we first show that
\begin{multline}
\label{eq:equivalence}
 \Bigl\{ (a_1,b_1) \mid n_a\ge a_{1}\ge\cdots\ge a_{l+1}\ge 0,\ n_b\ge b_{1}\ge\cdots\ge b_{l}\ge 0,\ k = \sum_i a_i + \sum_j b_j \Bigr\} \\
=\Bigl\{  (a_1,b_1) \mid 0\le a_1\le n_a,\ 0\le b_1\le n_b,\ a_1+b_1\le k \le a_1(l+1) + b_1 l \Bigr\}. 
\end{multline}

The inclusion ``$\subset$'' for the two sets in \eqref{eq:equivalence} is obvious.  To prove ``$\supset$'', let $(a_1,b_1)$ be a point in the set on the right-hand side of (\ref{eq:equivalence}).  We then need to argue that there exist $a_2,\ldots,a_{l+1},b_2,\ldots,b_{l}$ such that
$a_1\ge a_2\ge\cdots\ge a_{l+1}\ge 0$, $b_1\ge b_2\ge\cdots\ge b_{l}\ge 0$, and $\sum_{i=2}^{l+1} a_i + \sum_{i=2}^{l} b_i = k-a_1-b_1$ hold.
But this is obvious because 
\begin{align*}
& \biggl\{\sum_{i=2}^{l+1} a_i + \sum_{j=2}^{l} b_j \mid a_1\ge a_2\ge\cdots\ge a_{l+1}\ge 0,\ b_1\ge b_2\ge\cdots\ge b_{l}\ge 0 \biggr\} \\
&= \biggl\{\sum_{i=2}^{l+1} a_i + \sum_{j=2}^{l} b_j \mid 0\le a_2,\ldots,a_{l+1}\le a_1,\ 0\le b_2,\ldots,b_{l}\le b_1 \biggr\}\\
&= \bigl\{0,1,\ldots,a_1 l+b_1(l-1) \bigr\}
\end{align*}
covers all possible values for $k-a_1-b_1$.  

Finally, the feasible set for the minimization in (\ref{eq:r2}) differs from the set on the right-hand side of (\ref{eq:equivalence}) only by the constraint $a_1+b_1\le k$.  However, a pair $(a_1,b_1)$ cannot be a minimizer for (\ref{eq:r2}) if $a_1+b_1>k$.  Hence, the minimum in (\ref{eq:r2}) equals the minimum over the right-hand side of (\ref{eq:equivalence}).

\bigskip\noindent
{[Lower bound]}
To obtain a lower bound, it is convenient to rearrange the columns of (\ref{eq:I2XY}) as
\begin{equation}
\label{eq:XO}
 \begin{pmatrix}
 X_1 & X_3 & \cdots & X_{2l+1} & 0 & X_2 & X_4 & \cdots & X_{2l} & 0 \\
   0 & X_1 & \cdots & X_{2l-1} & X_{2l+1} & 0 & X_2 & \cdots & X_{2l-2} & X_{2l} \end{pmatrix}_{2k\times m_1}.
\end{equation}
We first find a lower bound of the rank of (\ref{eq:XO}) for a fixed matrix $X$, and then obtain a lower bound for all possible $X$.

Let $k_a = \rk(X_1,X_3,\ldots,X_{2l+1})$, and $k_b=k-k_a$.
There is a $(k-k_a)\times k$ matrix $T$ such that $T X=T(X_1,X_3,\ldots,X_{2l+1})=0$.
Let $S$ be a $k_a\times k$ matrix such that
$\begin{pmatrix} S \\ T \end{pmatrix}\in\mathrm{GL}(k,\mathbb{R})$.
Multiplying the matrix
\[
 \begin{pmatrix} S & 0 \\0 & S \\ T & 0 \\ 0 & T \end{pmatrix} \in\mathrm{GL}(2k,\mathbb{R})
\]
 to (\ref{eq:XO}) from the left yields
\[
 \begin{pmatrix}
 \X{1}_1 & \X{1}_3 & \cdots & \X{1}_{2l+1} & 0        & * & * & \cdots & * & 0\\
   0 & \X{1}_1 & \cdots & \X{1}_{2l-1} & \X{1}_{2l+1} & 0 & * & \cdots & * & * \\
   0 & 0 & \cdots & 0 & 0 & \X{1}_2 & \X{1}_4 & \cdots & \X{1}_{2l} & 0 \\
   0 & 0 & \cdots & 0 & 0 & 0   & \X{1}_2 & \cdots & \X{1}_{2l-2} & \X{1}_{2l}
\end{pmatrix}_{2k\times m_1},
\]
where we define $\X{1}_{2i-1}=S X_{2i-1}$ and $\X{1}_{2i}=T X_{2i}$.
The rank of (\ref{eq:XO}) is bounded below by the sum of the ranks of the two matrices:
\begin{equation}
\label{eq:Xodd}
 \begin{pmatrix}
 \X{1}_1 & \X{1}_3 & \cdots & \X{1}_{2l+1} & 0        \\
   0 & \X{1}_1 & \cdots & \X{1}_{2l-1} & \X{1}_{2l+1}
\end{pmatrix}_{2k_a\times n_a(l+2)}
\end{equation}
and
\begin{equation}
\label{eq:Xeven}
 \begin{pmatrix}
 \X{1}_2 & \X{1}_4 & \cdots & \X{1}_{2l} & 0 \\
 0   & \X{1}_2 & \cdots & \X{1}_{2l-2} & \X{1}_{2l}
\end{pmatrix}_{2k_b\times n_b(l+1)}.
\end{equation} 
Note here that
\[
 \rk\Bigl(\X{1}_1,\ldots,\X{1}_{2l+1}\Bigr)=k_a, \qquad
 \rk\Bigl(\X{1}_2,\ldots,\X{1}_{2l}\Bigr)=k_b,
\]
because
\[
 \begin{pmatrix} S \\ T \end{pmatrix}
 (X_1,\ldots,X_{2l+1},X_2,\ldots,X_{2l})
=\begin{pmatrix}
 \X{1}_1 & \cdots & \X{1}_{2l+1} &       * & \cdots & * \\
       0 & \cdots & 0            & \X{1}_2 & \cdots & \X{1}_{2l}
\end{pmatrix}
\]
is of row full rank $k=k_a+k_b$.

Let $\ra{1}$ be the rank of (\ref{eq:Xodd}), and let $a_1=\rk\bigl(\X{1}_1\bigr)$.  Then we obviously have
\begin{equation}
\label{eq:ineq_0}
 \ra{1} \ge a_1 + k_a.
\end{equation}
Take $A\in\mathrm{GL}(k_a,\mathbb{R})$ and $P\in\mathrm{GL}(n_a,\mathbb{R})$ such that
$A \X{1}_1 P=\begin{pmatrix} I_{a_1} \\ 0 \end{pmatrix}$; 
these are the first $k_a$ rows of $X_{1}$ in (\ref{eq:XX}).  Consider then transforming (\ref{eq:Xodd}) by multiplying the $2k_a\times 2k_a$ block-diagonal matrix $\diag(A,A)$ from the left and the $(l+2)n_a\times (l+2)n_a$ block-diagonal matrix $\diag(P,\ldots,P)$ from the right.  This transformation preserves rank and turns the matrix in  (\ref{eq:Xodd}) into
\begin{equation}
 \begin{pmatrix} \begin{pmatrix} I_{a_1} \\ 0 \end{pmatrix} & \tX_3 & \tX_5 & \cdots & \tX_{2l+1} & 0 \\
   0 & \begin{pmatrix} I_{a_1} \\ 0 \end{pmatrix} & \tX_3 & \cdots & \tX_{2l-1} & \tX_{2l+1} \end{pmatrix}_{2k_a\times n_a(l+2)},
\label{eq:XXX}
\end{equation}
where $\tX_{2i-1} = A \X{1}_{2i-1} P$.
For each $i$, let $\X{2}_{2i-1}$ be the submatrix consisting of the $(a_1+1)$st to $k_a$th row of $\tX_{2i-1}$.
Note that
\[
 \rk \Bigl(\X{1}_1,\ldots,\X{1}_{2l+1}\Bigr) = \rk \begin{pmatrix}
   I_{a_1} & *       & \cdots & * \\
   0       & \X{2}_3 & \cdots & \X{2}_{2l+1}
 \end{pmatrix}
\]
and hence
\begin{equation*}
 \rk \Bigl(\X{2}_3,\ldots,\X{2}_{2l+1}\Bigr) = \rk \Bigl(\X{1}_1,\ldots,\X{1}_{2l+1}\Bigr) -a_1 = k_a-a_1.
\end{equation*}
Deleting the rows with indices between  $k_a+1$ and $k_a+a_1$ from (\ref{eq:XXX}),
we have the inequality
\begin{align}
\ra{1}
&\ge \rk \begin{pmatrix} \begin{pmatrix} I_{a_1} \\ 0 \end{pmatrix} & \tX_3 & \tX_5 & \cdots & \tX_{2l+1} & 0 \\
 0 & 0 & \X{2}_3 & \cdots & \X{2}_{2l-1} & \X{2}_{2l+1} \end{pmatrix}_{(2k_a-a_1)\times n_a(l+2)} \nonumber \\
&= a_1 + \rk \begin{pmatrix} \X{2}_3 & \X{2}_5 & \cdots & \X{2}_{2l+1} & 0 \\
  0 & \X{2}_3 & \cdots & \X{2}_{2l-1} & \X{2}_{2l+1} \end{pmatrix}_{2(k_a-a_1)\times n_a(l+1)} \nonumber \\
&=: a_1 + \ra{2}.
\label{eq:ineq_a}
\end{align}

Repeating the procedure in the preceding paragraph $(i-1)$ times, we obtain the matrices $\X{i}_{2i-1},\ldots,\X{i}_{2l+1}$.
Let
\[
 a_i = \rk\bigl(\X{i}_{2i-1}\bigr)
\]
and
\[
 \ra{i} = \rk \begin{pmatrix} \X{i}_{2i-1} & \X{i}_{2i+1} & \cdots & \X{i}_{2l+1} & 0 \\
  0 & \X{i}_{2i-1} & \cdots & \X{i}_{2l-1} & \X{i}_{2l+1} \end{pmatrix}_{2\left(k_a-\sum_{j=1}^{i-1}a_j\right)\times n_a(l+3-i)}.
\]
Noting that
\begin{equation}
 \rk \Bigl(\X{i}_{2i-1},\ldots,\X{i}_{2l+1}\Bigr)
 = \rk \Bigl(\X{i-1}_{2i-3},\ldots,\X{i-1}_{2l+1}\Bigr) - a_{i-1}
 = k_a-\sum_{j=1}^{i-1} a_{j},
\label{eq:rankXi}
\end{equation}
we obtain that the inequalities in (\ref{eq:ineq_0}) and (\ref{eq:ineq_a}) extend to 
\begin{align}
\label{eq:ineq_0i}
 \ra{i} & \ge a_i + \biggl( k_a-\sum_{j=1}^{i-1}a_j \biggr), \\
\label{eq:ineq_ai}
 \ra{i} & \ge a_i + \ra{i+1},
\end{align}
respectively.
Here we let
$\ra{l+2}=0$.

From  (\ref{eq:ineq_0i}) and (\ref{eq:ineq_ai}), we find that 
\[
\ra{1} = \sum_{j=1}^{i-1} \bigl( \ra{j}-\ra{j+1} \bigr) + \ra{i}
\ge \sum_{j=1}^{i-1} a_j + a_i + \biggl( k_a-\sum_{j=1}^{i-1}a_j \biggr) = a_i + k_a
\]
for all $i$.
This is equivalent to
\begin{equation}
\label{eq:max_a}
 \ra{1} \ge \max_i a_i +k_a.
\end{equation}
From the construction (\ref{eq:rankXi}), we have
\begin{equation}
\label{eq:sum_a} 
 k_a = \sum_{i=1}^{l+1} a_i.
\end{equation}

Applying the same arguments, the rank $\rb{1}$ of (\ref{eq:Xeven}) is seen to satisfy the inequality
\begin{equation}
\label{eq:max_b}
 \rb{1} \ge \max_j b_j +k_b,
\end{equation}
where the $b_j=\rk\bigl(\X{j}_{2j}\bigr)$ are defined in analogy to the $a_i$ and satisfy the identity
\begin{equation}
\label{eq:sum_b}
 k_b = \sum_{j=1}^l b_j.
\end{equation}
Combining (\ref{eq:max_a}) and (\ref{eq:max_b}), the rank of the matrix (\ref{eq:XO}) can be bounded from below as
\begin{equation}
\label{eq:maxmax}
 \mbox{rank of }(\ref{eq:XO}) \ge \max_i a_i + \max_j b_j + k.
\end{equation}

The bound in \eqref{eq:maxmax} is for a given fixed matrix $X$ and in terms of the ranks $a_i$ and $b_j$ the matrix determines. To obtain a lower bound for all possible $X$, we may minimize the right-hand side of (\ref{eq:maxmax}) under the constraints the $a_i$ and $b_j$ should satisfy.  These constraints are given by (\ref{eq:sum_a}) and (\ref{eq:sum_b}).  We, thus, minimize over the set
\begin{multline*}
 \Bigl\{ \bigl(\max a_i,\max  b_j\bigr) \mid
 0\le a_i\le n_a, \ 0\le b_j\le n_b, \ %
 k = \textstyle \sum_i a_i + \sum_j b_j \Bigr\} \\
= \Bigl\{ \bigl(\max a_i,\max  b_j\bigr) \mid
  0\le \min a_i\le\max a_i\le n_a, \ 0\le \min b_j\le\max b_j\le n_b, \\ %
  \max a_i+\max b_j \le k \le \max a_i(l+1) + \max b_j l \Bigr\}.
\end{multline*}
As this set is contained in 
\begin{align*}
 \Bigl\{ \bigl(\max a_i,\max  b_j\bigr) \mid
  0\le \max a_i\le n_a, \ 0\le \max b_j\le n_b, \ %
  k \le \max a_i(l+1) + \max b_j l \Bigr\},
\end{align*}
we see that $r_2(m_1,m_2,k)$ is bounded from below by the right-hand side of (\ref{eq:r2}).
\end{proof}

\subsection{Evaluation of $S_2(m_1,m_2)$}

Our next goal is to evaluate the quantity
\begin{equation}
\label{eq:S2}
 S_2(m_1,m_2) = \min_{1\le k\le m_2-1} \bigl\{ m_2 r_2(m_1,m_2,k) - m_1 k \bigr\}
\end{equation}
whose sign determines the (in-)existence of the MLE.
As in the previous subsection, we refer to the numbers $l(m_1,m_2)$, $n_a$, and $n_b$ defined in (\ref{eq:l}) and (\ref{eq:nanb}), respectively.
Note first that
\[
 \frac{m_2}{m_1-m_2}\mbox{ is an integer} \ \iff \ n_b=0.
\]

\begin{theorem}
\label{thm:S2}
(i) If $m_1=m_2+1$, then 
$n_a=1$, $n_b=0$, and
\[
 S_2(m_1,m_2)=1,
\]
with the minimum in \eqref{eq:S2} attained iff $k=m_2-1$.

(ii) If $m_1>m_2+1$ and $\frac{m_2}{m_1-m_2}$ is an integer, then 
$n_a\ge 2$, $n_b=0$, and
\[
 S_2(m_1,m_2)=0,
\]
with the minimum in \eqref{eq:S2} attained iff $k$ is an integer multiple of $\frac{m_2}{m_1-m_2}$.

(iii)  If $m_1>m_2+1$ and $\frac{m_2}{m_1-m_2}$ is not an integer, then
$n_a\ge 1$, $n_b\ge 1$, and
\[
 S_2(m_1,m_2)=-n_a n_b,
\]
with the minimum in \eqref{eq:S2} attained iff $k=(l+1)n_a$.
\end{theorem}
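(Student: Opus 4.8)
The plan is to combine Theorem~\ref{thm:r2} with the definition \eqref{eq:S2}. Since Theorem~\ref{thm:r2} presents $r_2(m_1,m_2,k)$ itself as a minimum of $a_1+b_1+k$, substituting it into \eqref{eq:S2} and merging the $m_2k$ arising from $m_2r_2$ with the $-m_1k$ turns the problem into a single integer program,
\[
 S_2(m_1,m_2)=\min\bigl\{\, m_2a_1+m_2b_1-(m_1-m_2)k \;:\; 1\le k\le m_2-1,\ k\le (l+1)a_1+lb_1,\ 0\le a_1\le n_a,\ 0\le b_1\le n_b\,\bigr\}
\]
over nonnegative integers $a_1,b_1,k$. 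Here an integer $k\in\{1,\dots,m_2-1\}$ attains the outer minimum in \eqref{eq:S2} if and only if it is the last coordinate of some optimal solution of this program. I will also use four elementary consequences of \eqref{eq:nanb} --- $n_a+n_b=m_1-m_2$, $(l+1)n_a+ln_b=m_2$, $m_2-(m_1-m_2)(l+1)=-n_b$, and $m_2-(m_1-m_2)l=n_a$ --- together with the facts, noted just before the theorem, that $n_a\ge 1$, $n_b\ge 0$, and $n_b=0$ exactly when $(m_1-m_2)\mid m_2$.

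The first step is a universal lower bound that does not invoke $k\le m_2-1$. For any feasible triple $(a_1,b_1,k)$ the objective strictly decreases in $k$ since $m_1>m_2$, so replacing $k$ by $(l+1)a_1+lb_1$ only decreases it, and the last two identities give
\[
 m_2a_1+m_2b_1-(m_1-m_2)k \;\ge\; m_2a_1+m_2b_1-(m_1-m_2)\bigl[(l+1)a_1+lb_1\bigr]\;=\;n_ab_1-n_ba_1\;\ge\;-n_an_b,
\]
using $b_1\ge 0$, $a_1\le n_a$, and $n_a,n_b\ge 0$. Hence $S_2(m_1,m_2)\ge -n_an_b$ in all three cases. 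For the equality clauses note that a feasible triple attains $-n_an_b$ only if $k=(l+1)a_1+lb_1$ (strict inequality here makes the first bound strict), and if moreover $n_b\ge 1$ then $n_ab_1-n_ba_1=-n_an_b$ forces $b_1=0$ and $a_1=n_a$, so the triple must be $(n_a,0,(l+1)n_a)$.

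Case (iii): here $m_1>m_2+1$ and $(m_1-m_2)\nmid m_2$, hence $n_b\ge 1$ and $n_a\ge 1$. The triple $(n_a,0,(l+1)n_a)$ is feasible, the only non-obvious point being $1\le(l+1)n_a\le m_2-1$: indeed $(l+1)n_a\ge 2$, and $(l+1)n_a=m_2-ln_b\le m_2-1$ since $l\ge 1$ and $n_b\ge 1$. Its objective value equals $m_2n_a-(m_1-m_2)(l+1)n_a=-n_an_b$, so $S_2(m_1,m_2)=-n_an_b$, and by the uniqueness just established it is the only optimal triple; therefore the minimum in \eqref{eq:S2} is attained if and only if $k=(l+1)n_a$.

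Cases (i) and (ii): here $(m_1-m_2)\mid m_2$, so $n_b=0$, the variable $b_1$ must be $0$, $n_a=m_1-m_2$, and $(l+1)n_a=m_2$. The lower bound reads $S_2(m_1,m_2)\ge 0$, the objective being $m_2a_1-(m_1-m_2)k\ge m_2a_1-(m_1-m_2)(l+1)a_1=0$, with equality iff $k=(l+1)a_1$; such a $k$ is feasible iff $1\le(l+1)a_1\le m_2-1=(l+1)n_a-1$, i.e.\ iff $1\le a_1\le n_a-1$. If $m_1>m_2+1$ (case (ii)) then $n_a=m_1-m_2\ge 2$, such $a_1$ exist, $S_2(m_1,m_2)=0$, and the attaining $k$ are exactly $(l+1),2(l+1),\dots,(n_a-1)(l+1)$, i.e.\ the integer multiples of $\tfrac{m_2}{m_1-m_2}=l+1$ in $\{1,\dots,m_2-1\}$. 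If $m_1=m_2+1$ (case (i)) then $m_1-m_2=1$ divides $m_2$, $n_a=1$, $n_b=0$, $l+1=m_2$, the only feasible pair is $(a_1,b_1)=(1,0)$, and the constraint $k\le(l+1)a_1+lb_1$ reads $k\le m_2$ so $k$ ranges over $\{1,\dots,m_2-1\}$; the objective $m_2-k$ is then minimized, with value $1$, exactly at $k=m_2-1$, giving $S_2(m_1,m_2)=1$. There is no deep step here; the care lies in the ``iff'' clauses --- an optimizer must saturate $k\le(l+1)a_1+lb_1$, which determines $a_1,b_1$ --- and in the role of the constraint $k\le m_2-1$, which changes the optimal value only in case (i) and is exactly what forces $S_2(m_1,m_2)=1$ there rather than $0$.
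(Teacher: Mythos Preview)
Your proof is correct and follows the same overall approach as the paper: substitute the integer program from Theorem~\ref{thm:r2} into~\eqref{eq:S2} and analyze the resulting minimization over $(a_1,b_1,k)$. The paper proceeds slightly differently in presentation---it first extracts closed-form expressions for $r_2(m_1,m_2,k)$ in cases (i) and (ii) and, in case (iii), splits the feasible region into the subcases $a(l+1)+bl\le m_2-1$ and $a(l+1)+bl>m_2-1$---whereas you handle all three cases via a single lower-bound argument (pushing $k$ up to $(l+1)a_1+lb_1$ to obtain $n_ab_1-n_ba_1\ge -n_an_b$) and then check attainability. Your route is a bit more unified and avoids the case split in (iii); the paper's route makes the value of $r_2(m_1,m_2,k)$ itself more visible in (i) and (ii). Substantively the two arguments coincide.
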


\begin{proof}
(i) When $m_1=m_2+1$, we have $l=m_2-1$, $n_a=1$, and $n_b=0$.  It follows from Theorem~\ref{thm:r2} that $r_2(m_1,m_2,k)=k+1$.  We obtain that
\[
 S_2(m_1,m_2) = \min_{1\le k\le m_2-1} \bigl\{ m_2 (k+1) - (m_2+1) k \bigr\}
 = \min_{1\le k\le m_2-1} ( m_2 - k ) =1.
\]

\noindent
(ii)  When $m_1>m_2+1$ and $\frac{m_2}{m_1-m_2}$ is an integer, it holds that $l=\frac{m_2}{m_1-m_2}-1$,  $n_a=m_1-m_2$ and $n_b=0$.  Theorem~\ref{thm:r2} yields that
\[
 r_2(m_1,m_2,k)
 = k + \Bigl\lceil k \frac{m_1-m_2}{m_2} \Bigr\rceil.
\]
Consequently,
\[
 S_2(m_1,m_2) = \min_{1\le k\le m_2-1} m_2 \Bigl( \Bigl\lceil k \frac{m_1-m_2}{m_2} \Bigr\rceil - k \frac{m_1-m_2}{m_2} \Bigr) \ge 0.
\]
The lower bound $0$ is attained when $k \frac{m_1-m_2}{m_2}$ is an integer.

(iii) Finally, consider the case where $\frac{m_2}{m_1-m_2}$ is 
not an integer (trivially $m_1-m_2>1$).  Then $S_2(m_1,m_2)$ equals the minimum of the function
\begin{equation}
\label{eq:hkab}
 h(k,a,b)\;=\;m_2(a+b+k)-k m_1 \;=\;m_2(a+b)-(m_1-m_2)k
\end{equation}
over the set
\begin{align*}
 \bigl\{ (k,a,b) \mid 1\le k\le m_2-1,\ k\le a (l+1) + b l,\ 0\le a\le n_a,\ 0\le b\le n_b\bigr\}.
\end{align*}
We distinguish two cases of how the minimum may be attained, namely, case 1 with $m_2-1\ge a (l+1) + b l$, and case 2 with $m_2-1< a (l+1) + b l$.  Accordingly,
\[
 S_2(m_1,m_2) = \min\{S_2'(m_1,m_2),S_2''(m_1,m_2)\},
\]
where $S_2'(m_1,m_2)$ and $S_2''(m_1,m_2)$ are the minima of $h(k,a,b)$ from~\eqref{eq:hkab} over the sets 
\begin{align*}
 \bigl\{ (k,a,b) \mid 1\le k\le a (l+1) + b l
\le m_2-1,\ 0\le a\le n_a,\ 0\le b\le n_b\bigr\}
\end{align*}
and
\begin{align}
\label{eq:min-domain}
\bigl\{ (k,a,b) \mid 1\le k\le m_2-1< a (l+1) + b l,\ 0\le a\le n_a,\ 0\le b\le n_b\bigr\},
\end{align}
respectively.

Case 1:  The minimum $S_2'(m_1,m_2)$ is attained iff $k=a(l+1)+b l$, in which case
\[
h(k,a,b) =m_2(a+b)-(m_1-m_2)\{a (l+1) + b l\} =-n_b a + n_a b.
\]
Therefore,
\begin{align*}
 S_2'(m_1,m_2)
=& \min\bigl\{ -n_b a + n_a b \mid a (l+1) + b l\le m_2-1,\ 0\le a\le n_a,\ 0\le b\le n_b\bigr\}. 
\end{align*}
This minimum is achieved by taking $b$ as small as possible, so $b=0$, and $a$ is large as possible.  Indeed, $(a,b)=(n_a,0)$ is feasible as
\[
(m_2-1)-[n_a(l+1)+0\cdot l]  = l n_b-1 \ge 0.
\]
We conclude that $S_2'(m_1,m_2)=-n_a n_b$, with the minimum attained at $k=n_a(l+1)$.

Case 2: Because
\[
 m_2-1 < a(l+1)+b l\le n_a(l+1)+n_b l=m_2,
\]
the set (\ref{eq:min-domain}) is
\[
 \{ (k,a,b) \mid 1\le k\le m_2-1,\,a=n_a,\,b=n_b \}.
\]
The minimum $S_2''(m_1,m_2)$ is attained iff $k=m_2-1$, in which case
\[
h(k,a,b) = m_2(n_a+n_b)-(m_1-m_2)(m_2-1) = m_1-m_2>0.
\]

In summary, $S_2(m_1,m_2) = \min\{S_2'(m_1,m_2),S_2''(m_1,m_2)\}=-n_a n_b$, and this minimum is attained iff $k=n_a(l+1)+0=n_a(l+1)$.
\end{proof}
\begin{remark}
When $S_2(m_1,m_2)=0$, neither $m_1$ nor $m_2$ is a prime number.
\end{remark}

Including the square case, the possible values of $S_2(m_1,m_2)$ may be summarized as follows.
The values are tabulated up to $m_1\le 17$ in Table \ref{table:S2}.
\begin{proposition}
For $n=2$ generic data matrices $Y_1,Y_2\in\mathbb{R}^{m_1\times m_2}$ with $m_2\le m_1< 2m_2$,
\begin{align*}
S_2(m_1,m_2)
&= \begin{cases}
 0 & (m_1=m_2\ge 3 \mbox{ or } \\
   & \ m_1=m_2=2 \mbox{ and $Y_1^{-1}Y_2$ has real eigenvalues}), \\
 2 & (m_1=m_2=2 \mbox{ and $Y_1^{-1}Y_2$ has complex eigenvalues}), \\
 1 & (m_1=m_2+1), \\
 0 & (m_1>m_2+1,\,m_1-m_2\vert m_2), \\
 -n_a n_b & (m_1-m_2\!\not\vert m_2).
   \end{cases}
\end{align*}
\end{proposition}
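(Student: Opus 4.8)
The plan is to obtain each line of the displayed formula by specializing results already established. Under the hypothesis $m_2 \le m_1 < 2m_2$ there are two regimes: the square case $m_1 = m_2$ and the rectangular range $m_2 < m_1 < 2m_2$. For $m_1 = m_2 =: m$, the value of $S_2(m,m)$ was computed in the lemma of Section~\ref{sec:square-matrices} evaluating $S_2(m,m)$: it equals $2$ exactly when $m = 2$ and $Y_1^{-1}Y_2$ has complex (non-real) eigenvalues, and it equals $0$ in every other square case, namely for all $m \ge 3$ as well as for $m = 2$ with $Y_1^{-1}Y_2$ having real eigenvalues. This reproduces the first two lines of the proposition verbatim; the subtlety that for $m = 2$ both eigenvalue types occur with positive probability is already accounted for in that lemma.

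For the rectangular range $m_2 < m_1 < 2m_2$, I would first note the elementary equivalences
\[
 m_1 - m_2 \mid m_2 \iff \frac{m_2}{m_1 - m_2} \in \mathbb{N} \iff n_b = 0,
\]
which are immediate from the definitions of $l = l(m_1,m_2)$, $n_a$, and $n_b$ in \eqref{eq:l} and \eqref{eq:nanb}, recalling that $n_b = (l+1)m_1 - (l+2)m_2$. Theorem~\ref{thm:S2} then supplies the remaining cases. Its part (i) covers $m_1 = m_2 + 1$, where $n_a = 1$ and $n_b = 0$, and gives $S_2(m_1,m_2) = 1$; its part (ii) covers $m_1 > m_2 + 1$ with $n_b = 0$, equivalently $m_1 - m_2 \mid m_2$, and gives $S_2(m_1,m_2) = 0$; and its part (iii) covers $m_1 > m_2 + 1$ with $n_b \ge 1$, equivalently $m_1 - m_2 \nmid m_2$ (which forces $m_1 - m_2 \ge 2$), and gives $S_2(m_1,m_2) = -n_a n_b$. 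Since parts (i), (ii), and (iii) exhaust the rectangular range, combining them with the square case yields the asserted formula, and the entries of Table~\ref{table:S2} for $m_1 \le 17$ follow by substituting the closed forms for $l$, $n_a$, $n_b$ from \eqref{eq:l} and \eqref{eq:nanb}.

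I do not expect any genuine obstacle here, since the proposition is a consolidation of the square-case lemma and Theorem~\ref{thm:S2}, re-expressed using the divisibility criterion $m_1 - m_2 \mid m_2$ in place of ``$\tfrac{m_2}{m_1-m_2}$ is an integer''. The one point I would state explicitly to forestall confusion is that the value $-n_a n_b$ is claimed only on the set $m_1 - m_2 \nmid m_2$: in the case $m_1 = m_2 + 1$ one has $n_b = 0$, so $-n_a n_b$ would evaluate to the incorrect value $0$, but there the separate line ``$m_1 = m_2 + 1$'' applies and supplies $S_2(m_1,m_2) = 1$. All the substantive work --- the integer-programming formula for $r_2(m_1,m_2,k)$ in Theorem~\ref{thm:r2} and the minimization over $1 \le k \le m_2 - 1$ carried out in the proof of Theorem~\ref{thm:S2} --- has already been done, so only this bookkeeping remains.
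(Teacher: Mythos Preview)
Your proposal is correct and matches the paper's treatment exactly: the paper presents this proposition as a summary statement (``Including the square case, the possible values of $S_2(m_1,m_2)$ may be summarized as follows'') without a separate proof, since it is precisely the consolidation of the square-case lemma computing $S_2(m,m)$ and Theorem~\ref{thm:S2}, re-expressed via the divisibility criterion $m_1-m_2\mid m_2$ in place of ``$m_2/(m_1-m_2)$ is an integer''. Your explicit note that the $-n_an_b$ line applies only when $m_1-m_2\nmid m_2$ (so that the $m_1=m_2+1$ case is handled separately) is a helpful clarification the paper leaves implicit.
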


We recall that
the MLE exists uniquely if $S_2(m_1,m_2)>0$,
exists non-uniquely if $S_2(m_1,m_2)=0$, and 
does not exist if $S_2(m_1,m_2)<0$.


\begin{table}[h]
\caption{$S_2(m_1,m_2)$ ($m_1\ge m_2$).}
\label{table:S2}
\begin{center}
\begin{footnotesize}
\begin{tabular}{c||ccccccccccccccc}
\hline
\small\diagbox[width=7ex]{$m_1$}{$m_2$}
  & 2 & 3 & 4 & 5 & 6 & 7 & 8 & 9 & 10 & 11 & 12 & 13 & 14 & 15 & 16 \\
\hline\hline
2 & \z{0/2} \\
3 & \p{1} & \z{0} \\
4 & \z{0} & \p{1} & \z{0} \\
5 & \m{} & \m{--1} & \p{1} & \z{0} \\
6 & \m{} & \z{0} & \z{0} & \p{1} & \z{0} \\
7 & \m{} & \m{} & \m{--2} & \m{--1} & \p{1} & \z{0} \\
8 & \m{} & \m{} & \z{0} & \m{--2} & \z{0} & \p{1} & \z{0} \\
9 & \m{} & \m{} & \m{} & \m{--3} & \z{0} & \m{--1} & \p{1} & \z{0} \\
10 & \m{} & \m{} & \m{} & \z{0} & \m{--4} & \m{--2} & \z{0} & \p{1} & \z{0} \\
11 & \m{} & \m{} & \m{} & \m{} & \m{--4} & \m{--3} & \m{--2} & \m{--1} & \p{1} & \z{0} \\
12 & \m{} & \m{} & \m{} & \m{} & \z{0} & \m{--6} & \z{0} & \z{0} & \z{0} & \p{1} & \z{0} \\
13 & \m{} & \m{} & \m{} & \m{} & \m{} & \m{--5} & \m{--6} & \m{--3} & \m{--2} & \m{--1} & \p{1} & \z{0} \\
14 & \m{} & \m{} & \m{} & \m{} & \m{} & \z{0} & \m{--8} & \m{--4} & \m{--4} & \m{--2} & \z{0} & \p{1} & \z{0} \\
15 & \m{} & \m{} & \m{} & \m{} & \m{} & \m{} & \m{--6} & \m{--9} & \z{0} & \m{--3} & \z{0} & \m{--1} & \p{1} & \z{0} \\
16 & \m{} & \m{} & \m{} & \m{} & \m{} & \m{} & \z{0} & \m{--10} & \m{--8} & \m{--4} & \z{0} & \m{--2} & \z{0} & \p{1} & \z{0} \\
17 & \m{} & \m{} & \m{} & \m{} & \m{} & \m{} & \m{} & \m{--7} & \m{--12} & \m{--5} & \m{--6} & \m{--3} & \m{--2} & \m{--1} & \p{1} \\
\hline
\end{tabular}
\end{footnotesize}
\end{center}
\smallskip

\begin{flushleft}
\begin{tiny}
\begin{tabular}{|c|}
\hline \p{} \\ \hline 
\end{tabular}
\end{tiny}{\scriptsize : MLE exists uniquely.}

\begin{tiny}
\begin{tabular}{|c|}
\hline \z{} \\ \hline 
\end{tabular}
\end{tiny}{\scriptsize : MLE exists non-uniquely.}

\begin{tiny}
\begin{tabular}{|c|}
\hline \m{} \\ \hline 
\end{tabular}
\end{tiny}{\scriptsize : MLE does not exist.}
\end{flushleft}
\end{table}

\section{When the column size of matrices is 2}

When the matrix size of $Y_i$ is $m_1\times 2$, that is, $m_2=2$,
we can find $S_n(m_1,2)$ as a byproduct of the case $n=2$ discussed in Sections~\ref{sec:square-matrices} and \ref{sec:rectangular-matrices}.
Indeed, from the definitions (\ref{eq:minimal-rank-summary}) and (\ref{eq:minimal-rank}),
\[
S_n(m_1,2) = \min_{1\le k<2} \bigl\{ 2 r_n(m_1,2,k)-m_1 k \bigr\} = 2 r_n(m_1,2,1)-m_1,
\]
and
\begin{align*}
    r_n(m_1,2,1)
 &= \min_{X\in\mathbb{R}^{2\times 1}: \rk(X)=1}\rk(Y_1 X,\ldots,Y_n X) \\
 &= \min_{(x_1,x_2)\in\mathbb{R}^2\setminus\{0\}} \rk\bigl(x_1 Y_{(1)} + x_2 Y_{(2)}\bigr),
\end{align*}
where $Y_{(j)}=(y_{1j},\ldots,y_{nj})$, $j=1,2$, with $y_{ij}$ the $j$th column vector of $Y_i$.
The matrices $Y_{(j)}$ are $m_1\times n$ generic matrices.
We examine the cases (i) $m_1=n$ and (ii) $m_1\ne n$ separately.

(i) Let $m=m_1=n$. Since the $Y_{(j)}$ are non-singular, we may define $W=Y_{(1)}^{-1}Y_{(2)}$.
Then, $r_n(m_1,2,1)=\min_{x\in\mathbb{R}}\rk(x I_m+W)=m-1$ if $W$ has a real eigenvalue, and $r_n(m_1,2,1)=m$ otherwise.
Therefore,
$S_n(m,2) = 2 r_m(m,2,1) - m = m-2$ if $W$ has a real eigenvalue, and $S_n(m,2)=m$ otherwise.

(ii) Suppose that $m_1>n$.
Then, by multiplying a suitable $(A,C)\in \mathrm{GL}(m_1)\times\mathrm{GL}(n)$ from left and right to get the Kronecker canonical form, we have
\begin{equation}
\label{kronecker_again}
 r_n(m_1,2,1) = \min_{(x_1,x_2)\ne 0}\rk\biggl(x_1 \begin{pmatrix}I_n \\ 0 \end{pmatrix} + x_2\begin{pmatrix}0 \\ I_n \end{pmatrix}\biggr) =n.
\end{equation}
Hence, $S_n(m_1,2) = 2n-m_1$.
Similarly, when $m_1<n$, we have $r_n(m_1,2,1)=m_1$ and $S_n(m_1,2)=m_1$.
Recall that we are examining the region $m_2\le m_1<n m_2$, or equivalently $1\le m_1/2<n$ when $m_2=2$.
\begin{proposition}
\label{prop:m2=2}
For generic data matrices $Y_1,\ldots,Y_n\in\mathbb{R}^{m_1\times 2}$ with $1\le m_1/2<n$,
\begin{align*}
 S_n(m_1,2)
 =& \begin{cases}
 2 n-m_1 & (n<m_1), \\
 m_1 & (m_1<n), \\
 m & (m_1=n=m,\,\mbox{$W$ does not have real eigenvalues}), \\
 m-2 & (m_1=n=m,\,\mbox{$W$ has a real eigenvalue}).
 \end{cases}
\end{align*} 
That is, (i) $S_n(m_1,2)>0$ when $m_1\ne n$, or when $m_1=n>2$, or when $m_1=n=2$ and $W$ does not have real eigenvalues; (ii) $S_n(m_1,2)=0$ when $m_1=n=2$ and $W$ has a real eigenvalue.
\end{proposition}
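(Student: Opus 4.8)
The proposition collects the three case computations carried out in the paragraphs immediately above, so the proof proceeds by assembling them and reading off the sign of $S_n(m_1,2)$. The starting observation is that because $m_2=2$ the only admissible index in \eqref{eq:minimal-rank-summary} is $k=1$, giving $S_n(m_1,2)=2\,r_n(m_1,2,1)-m_1$, and that $r_n(m_1,2,1)=\min_{(x_1,x_2)\ne 0}\rk\bigl(x_1Y_{(1)}+x_2Y_{(2)}\bigr)$, where $Y_{(1)},Y_{(2)}\in\mathbb{R}^{m_1\times n}$ are the generic matrices formed from the first and second columns of the $Y_i$, so that $x_1Y_{(1)}+x_2Y_{(2)}$ is a generic real matrix pencil.

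For $m_1\ne n$ I would appeal to the Kronecker canonical form of this pencil. When $n<m_1$, hence $n<m_1<2n$, the generic canonical form has only blocks with strictly positive minimal index, so $\rk(x_1Y_{(1)}+x_2Y_{(2)})=n$ for every nonzero $(x_1,x_2)$, as recorded in \eqref{kronecker_again}; thus $r_n(m_1,2,1)=n$ and $S_n(m_1,2)=2n-m_1$, which is positive because $m_1<2n$. When $m_1<n$ the transposed argument gives full row rank $m_1$ for every nonzero combination, so $r_n(m_1,2,1)=m_1$ and $S_n(m_1,2)=m_1>0$.

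For $m_1=n=m$ the matrices $Y_{(1)},Y_{(2)}$ are square and generic, hence invertible, and $\rk(x_1Y_{(1)}+x_2Y_{(2)})=\rk(x_1I_m+x_2W)$ with $W=Y_{(1)}^{-1}Y_{(2)}$; this equals $m$ at $x_2=0$ and equals $\rk(W-\lambda I_m)$ with $\lambda=-x_1/x_2\in\mathbb{R}$ otherwise. A generic $W$ has $m$ distinct eigenvalues, so $\rk(W-\lambda I_m)$ equals $m-1$ precisely at the real eigenvalues of $W$ and equals $m$ at every other real $\lambda$. Hence $r_n(m,2,1)=m-1$ if $W$ has a real eigenvalue and $r_n(m,2,1)=m$ if all eigenvalues of $W$ are complex, so that $S_n(m,2)=m-2$ or $S_n(m,2)=m$, respectively. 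Assembling the cases gives the displayed formula, and since $m-2=0$ exactly when $m=2$ the sign statements (i) and (ii) follow, the only configuration with $S_n(m_1,2)=0$ being $m_1=n=2$ with $W$ having a real eigenvalue.

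I do not expect a genuine obstacle here; the one point that needs care is the interpretation of ``generic'' in the case $m_1=n=m$, where for $m=2$ the events that $W$ has a real eigenvalue and that $W$ has only complex eigenvalues each occur on a nonempty open set, so both alternatives genuinely arise and the case split in the statement is unavoidable.
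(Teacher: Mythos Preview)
Your proposal is correct and follows essentially the same route as the paper: the argument given in the paragraphs immediately preceding the proposition already constitutes its proof, and you have reproduced it faithfully—reducing to $k=1$, rewriting $r_n(m_1,2,1)$ as the minimal rank of the pencil $x_1Y_{(1)}+x_2Y_{(2)}$, handling $m_1\ne n$ via the Kronecker canonical form and $m_1=n$ via the eigenvalues of $W=Y_{(1)}^{-1}Y_{(2)}$. Your closing remark on the case $m_1=n=2$ is also in line with the paper's treatment elsewhere (Proposition~\ref{prop:square-2x2}).
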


\begin{remark}
\label{rem:third_invariance}
The transformation from $Y_i$'s to the Kronecker form in (\ref{kronecker_again}) is written as $(Y_1,\ldots,Y_n)\mapsto A(Y_1,\ldots,Y_n)(C\otimes I_{m_2})$, where $C$ is an $n\times n$ non-singular matrix.
Here we used the fact that $\rk(Y_1,\ldots,Y_n)(C\otimes I_n)$ is invariant as long as $C\in\mathrm{GL}(n)$.
Combined with the group action discussed in Section~\ref{sec:group-action}, the group action
\[
 (Y_1,\ldots,Y_n) \mapsto A (Y_1,\ldots,Y_n)(C\otimes B), \quad (A,B,C)\in\mathrm{GL}(m_1)\times\mathrm{GL}(m_2)\times\mathrm{GL}(n)
\]
keeps the values $r_n(m_1,m_2,k)$, and hence also $S_n(m_1,m_2)$, invariant.
\end{remark}

\section{Maximum likelihood estimation for two data matrices}
\label{sec:mle}

In this section we derive the precise form of maximizers of the likelihood function for $n=2$ rectangular data matrices of size $m_1\times m_2$ with $2m_2\ge m_1>m_2$.  We first give the MLE when it exists uniquely.  This then allows us to show existence of maximizers in the cases where the study of ranks in Section~\ref{sec:rectangular-matrices} implies that the likelihood function is bounded.  

\subsection{Closed form MLEs for $m_1=m_2+1$}
\label{sec:closed-form-mle-m2+1}

Consider the case where $m_1=m_2+1$ and $n=2$, so that a unique MLE exists almost surely.
For simpler notation, let $m:=m_2$.
By Theorem~\ref{thm:tenberge}, we may assume that the two data matrices are
\begin{align}
  Y_1 =
        \begin{pmatrix}
          I_{m} \\
          0_{1,m}
        \end{pmatrix}, \qquad
  Y_2 =
        \begin{pmatrix}
          0_{1,m} \\
          I_{m}
        \end{pmatrix}.
\end{align}
Then the negated profile log-likelihood function takes the form
\begin{align*}
  g_0(\Phi) &= m\log\det\left( 
            \begin{pmatrix}
              \Phi & 0 \\
              0    & 0
            \end{pmatrix}  +
            \begin{pmatrix}
              0 & 0 \\
              0 & \Phi
            \end{pmatrix}
\right) - (m+1) \log\det(\Phi).
\end{align*}

\begin{proposition}
\label{prop:closed-form-mle-m2+1}
  When restricted to matrices $\Phi=(\phi_{jk})$ with $\phi_{11}=1$,
  the function $g_0$ is uniquely minimized by the diagonal matrix
  \[
    \Phi_0 \;=\; \diag\left( \binom{m-1}{j-1} : j=1,\dots,m\right).
  \]
\end{proposition}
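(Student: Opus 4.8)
\emph{Proof proposal.} The plan is to identify the critical-point equation of $g_0$ on $\mathit{PD}(m)$, verify by direct computation that $\Phi_0$ solves it, and then use the global structure of the problem to upgrade this to \emph{unique} minimization on the slice $\{\phi_{11}=1\}$. As preliminary observations I would record that $g_0$ is scale-invariant, $g_0(c\Phi)=g_0(\Phi)$ for all $c>0$, and geodesically convex, being the negated profile of a g-concave log-likelihood (Section~\ref{sec:profile}). Moreover, since $m_1=m_2+1$ gives $S_2(m_1,m_2)=1>0$ by Theorem~\ref{thm:S2}(i), the sufficient rank condition in Theorem~\ref{thm:sufficient-rank}(ii) holds, so $g_0$ is coercive and, by Lemma~\ref{lem:profile-likelihood}, has a minimizer that is unique up to positive scaling. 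Coercivity also forces the restriction of $g_0$ to $\{\Phi\in\mathit{PD}(m):\phi_{11}=1\}$ to attain its infimum at an interior point; at such a point the gradient of $g_0$ is orthogonal to the tangent directions $\{U:U_{11}=0\}$, hence a multiple of $E_{11}$, and that multiple equals $\tr(\cdot\,\Phi)$ evaluated on it, which vanishes by scale-invariance. Thus the constrained minimizer is actually an unconstrained critical point, so it suffices to show that $\Phi_0$ is a critical point of $g_0$, and indeed $(\Phi_0)_{11}=\binom{m-1}{0}=1$ puts it on the slice.

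Next I would compute the differential. With $M(\Phi)=Y_1\Phi Y_1^T+Y_2\Phi Y_2^T$ and $\mathrm{d}\log\det(\,\cdot\,)=\tr\bigl((\,\cdot\,)^{-1}\mathrm{d}(\,\cdot\,)\bigr)$,
\[
 \mathrm{d}g_0(\Phi;U)=\tr\Bigl(\bigl[\,m\bigl(Y_1^T M(\Phi)^{-1}Y_1+Y_2^T M(\Phi)^{-1}Y_2\bigr)-(m+1)\Phi^{-1}\bigr]U\Bigr),
\]
so $\Phi$ is critical precisely when $m\bigl(Y_1^T M(\Phi)^{-1}Y_1+Y_2^T M(\Phi)^{-1}Y_2\bigr)=(m+1)\Phi^{-1}$.

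The crux is the evaluation at $\Phi_0=\diag(d_1,\dots,d_m)$ with $d_j=\binom{m-1}{j-1}$: because $\Phi_0$ is diagonal, both summands defining $M(\Phi_0)$ are diagonal, and $M(\Phi_0)=\diag(d_1,\,d_1+d_2,\,\dots,\,d_{m-1}+d_m,\,d_m)$. Pascal's rule gives $d_j+d_{j+1}=\binom{m}{j}$ together with $d_1=d_m=1$, so $M(\Phi_0)=\diag\bigl(\binom{m}{0},\binom{m}{1},\dots,\binom{m}{m}\bigr)$. Consequently $Y_1^T M(\Phi_0)^{-1}Y_1$ and $Y_2^T M(\Phi_0)^{-1}Y_2$ are the leading and trailing $m\times m$ diagonal blocks of $M(\Phi_0)^{-1}$, and the critical-point equation collapses to the scalar identities
\[
 m\Bigl(\frac{1}{\binom{m}{j-1}}+\frac{1}{\binom{m}{j}}\Bigr)=\frac{m+1}{\binom{m-1}{j-1}},\qquad j=1,\dots,m,
\]
each of which follows at once from $1/\binom{m}{i}=i!\,(m-i)!/m!$.

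Assembling the pieces: $\Phi_0$ is a critical point of $g_0$, hence a global minimizer by g-convexity (Lemma~\ref{lem:gconvex}), and it lies on the slice $\{\phi_{11}=1\}$, on which the minimizer is unique by coercivity. I do not expect a serious obstacle; the only genuine content is (a) the bookkeeping with scale-invariance and coercivity that promotes ``critical point'' to ``unique minimizer on the slice'', and (b) the recognition, via Pascal's rule, that the choice $d_j=\binom{m-1}{j-1}$ turns $M(\Phi_0)$ into $\diag(\binom{m}{j})$, after which only the elementary reciprocal-binomial identity above remains to be checked.
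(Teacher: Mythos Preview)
Your proposal is correct and follows essentially the same approach as the paper: both reduce the claim to showing that $\Phi_0$ is a critical point of $g_0$ (invoking Theorem~\ref{thm:S2}(i) and g-convexity for existence and uniqueness), compute the differential via $\mathrm{d}\log\det$, and verify the diagonal critical-point equations through binomial identities. Your use of Pascal's rule to recognize $M(\Phi_0)=\diag\bigl(\binom{m}{0},\dots,\binom{m}{m}\bigr)$ up front is a nice streamlining---the paper instead verifies the equations $\frac{m+1}{\phi_j}=m\bigl(\frac{1}{\phi_j+\phi_{j-1}}+\frac{1}{\phi_j+\phi_{j+1}}\bigr)$ by solving recursively for $\phi_{j+1}$ in terms of $\phi_j,\phi_{j-1}$---but the content is the same.
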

\begin{proof}
    The existence of a unique minimizer is clear from Theorem~\ref{thm:S2}.  It thus 
     suffices to show that $\Phi_0$ is a
  critical point of $g_0$.

  The logarithm of
  the determinant has differential
  \[
    \mathrm{d}\log\det(\Phi) = \tr\left( \Phi^{-1} \mathrm{d}\Phi\right).
  \]
  It follows that the differential of $g_0$ is
  \begin{multline*}
    \mathrm{d}g_0(\Phi;U) 
    = \\
    m\tr\left\{\left[ 
      \begin{pmatrix}
        \Phi & 0 \\
        0    & 0
      \end{pmatrix}  +
            \begin{pmatrix}
              0 & 0 \\
              0 & \Phi
            \end{pmatrix}
    \right]^{-1} \left[\begin{pmatrix}
      U & 0\\
      0 & 0
    \end{pmatrix}  +
          \begin{pmatrix}
            0 & 0\\
            0 & U
          \end{pmatrix}\right]
    \right\} - (m+1) \tr\left( \Phi^{-1}U\right).
  \end{multline*}
  Since our candidate $\Phi_0$ is diagonal,
  \begin{equation*}
    \mathrm{d}g_0(\Phi_0;U) =\\
    m \left( \frac{u_{11}}{\phi_1} + \frac{u_{mm}}{\phi_m}+
      \sum_{j=1}^{m-1}
      \frac{u_{jj}+u_{j+1,j+1}}{\phi_j+\phi_{j+1}}\right)
    - (m+1) \sum_{j=1}^m \frac{u_{jj}}{\phi_j},
  \end{equation*}
  where $\phi_j$ is the $j$-th diagonal entry of $\Phi_0$, and
  $U=(u_{jk})$.  The differential $\mathrm{d}g_0(\Phi_0;U)$ is zero if
  \begin{align}
    \label{eq:mle:1}
    \frac{m+1}{\phi_1} 
    &= m\left(\frac{1}{\phi_1}+\frac{1}{\phi_1+\phi_2}\right), \\
    \label{eq:mle:2}
    \frac{m+1}{\phi_j} 
    &=
      m\left(\frac{1}{\phi_j+\phi_{j-1}}+\frac{1}{\phi_j+\phi_{j+1}}\right),
      \quad j=2,\dots,m-1,\\
    \label{eq:mle:3}
    \frac{m+1}{\phi_m} 
    &= m\left(\frac{1}{\phi_m}+\frac{1}{\phi_m+\phi_{m-1}}\right).
  \end{align}
  
  It is easy to see that the first equation, that in~(\ref{eq:mle:1}),
  holds for our choice of $\Phi_0$.  Indeed, after clearing
  denominators, the equation becomes
  \[
    (m+1)\left(\phi_1+\phi_2 \right) \;=\; m\left(
      2\phi_1+\phi_2\right)  \iff \phi_2 = (m-1)\phi_1,
  \]
  which clearly holds when $\phi_1=\binom{m-1}{0}=1$ and
  $\phi_2=\binom{m-1}{1}=m-1$.   The last equation in~(\ref{eq:mle:3})
  holds similarly as we have $\phi_m=\binom{m-1}{m-1}=1$ and
  $\phi_{m-1}=\binom{m-1}{m-2}=m-1$.   

  Let $2\le j\le m-1$.  Solving the $j$-th equation
  in~(\ref{eq:mle:2}) for $\phi_{j+1}$ we obtain
  \begin{equation}
    \label{eq:MLE:2a}
    \phi_{j+1} \;=\; \phi_j\cdot\frac{
        (m-1)\phi_j-\phi_{j-1}}{\phi_j + (m+1)\phi_{j-1}}. 
  \end{equation}
  Using that
  \[
    \binom{m-1}{j-1} = \frac{m-j+1}{j-1} \binom{m-1}{j-2},
  \]
  we derive that
  \[
    \frac{
      (m-1)\phi_j-\phi_{j-1}}{\phi_j + (m+1)\phi_{j-1}} =
    \frac{
      (m-1)\binom{m-1}{j-1}-\binom{m-1}{j-2}}{\binom{m-1}{j-1} +
      (m+1)\binom{m-1}{j-2}} = 
    \frac{
      (m-1) \frac{m-j+1}{j-1}-1}{ \frac{m-j+1}{j-1} +
      (m+1)} = \frac{m-j}{j}.  
    \]
  We see that~(\ref{eq:MLE:2a}) holds because
  \[
    \phi_{j+1} = \binom{m-1}{j} = \frac{m-j}{j}\binom{m-1}{j-1} =
    \frac{m-j}{j}\phi_j. 
  \]
  We have thus shown that our choice of $\Phi_0$ satisfies
  $\mathrm{d}g_0(\Phi_0;U)\equiv 0$.  
\end{proof}

Simple calculations yield that, at the critical point,
\begin{equation}
 g_0(\Phi_0) = m \log d(m) - (m+1) \log e(m),
\end{equation}
where
\begin{equation*}
 d(m)
= \det\left(\sum_{i=1}^2 Y_i \Phi_0^{-1} Y_i^T\right)
= \Bigl(\frac{m}{m-1}\Bigr)^{m-1} \frac{1}{\prod_{j=1}^{m-1}\binom{m-2}{j-1}} 
\end{equation*}
and
\begin{equation*}
e(m)
= \det\Psi(m)
= \frac{1}{\prod_{j=1}^{m}\binom{m-1}{j-1}}.
\end{equation*}

\subsection{Critical points when MLEs exist non-uniquely}
\label{sec:closed-form-mle-nonunique}

Non-unique existence corresponds to case (ii) in Theorem~\ref{thm:S2}.  So, $n_b=0$ and $n_a=m_1-m_2\ge 2$.
Moreover, $m_1=(l+2)n_a$ and $m_2=(l+1)n_a$.
Assume, as before, that the two $m_1\times m_2$ data matrices are
\begin{align}
  Y_1 =
        \begin{pmatrix}
          I_{m_2} \\
          0_{m_1-m_2,m_2}
        \end{pmatrix}, \qquad
  Y_2 =
        \begin{pmatrix}
          0_{m_1-m_2,m_2} \\
          I_{m_2}
        \end{pmatrix}.
\end{align}
As seen in Remark~\ref{remark:kroneckerform}, by permuting the rows and the columns of $Y_1$ and $Y_2$ simultanously, we may transform $Y_1$ and $Y_2$ into
\[
 Y_1 = \begin{pmatrix}
   U_{l+1} &        & 0 \\
           & \ddots &   \\
   0       &        & U_{l+1}
   \end{pmatrix}, \qquad
 U_{l+1} = \begin{pmatrix}
   I_{l+1} \\
   0_{1,l+1}
  \end{pmatrix} \in\mathbb{R}^{(l+2)\times(l+1)}
\]
and
\[
 Y_2 = \begin{pmatrix}
   L_{l+1} &        & 0 \\
           & \ddots & \\
   0       &        & L_{l+1}
       \end{pmatrix}, \qquad
 L_{l+1} = \begin{pmatrix}
   0_{1,l+1} \\
   I_{l+1}
   \end{pmatrix} \in\mathbb{R}^{(l+2)\times(l+1)}.
\]
Let $\Phi=\diag(\Phi_1,\ldots,\Phi_{n_a})$ with $\Phi_j\in\mathbb{R}^{(l+1)\times(l+1)}$.
Then the negated profile log-likelihood function takes the form
\begin{align*}
  g_0(\Phi) &= \sum_{j=1}^{n_a}\left[ m_2 \log\det\left( 
            \begin{pmatrix}
              \Phi_j & 0 \\
                   0 & 0
            \end{pmatrix} +
            \begin{pmatrix}
              0 & 0 \\
              0 & \Phi_j \\
            \end{pmatrix}
\right) - m_1 \log\det(\Phi_j) \right].
\end{align*}
Applying Proposition \ref{prop:closed-form-mle-m2+1} to each summand we may determine critical points $\Phi_{0j}$ for each summand in $g_0(\Phi)$.  These may then be combined to obtain critical points of $g_0$.

\begin{proposition}
\label{prop:crit-point-nonunique}
  When restricted to matrices $\Phi=(\phi_{jk})$ with $\phi_{11}=1$,
  the function $g_0$ has a critical point at every diagonal matrix
  $\Phi_0=(\phi^0_{jk})$ whose diagonal entries are
  \[
    \phi^0_{(j-1)n_a+k,(j-1)n_a+k}
 \;=\; c_j \binom{\ell}{k-1}, \qquad
    j=1,\dots,n_a,\quad k=1,\dots,\ell+1,
  \]
  where $c_1=1$ and $c_j>0$ arbitrary for $j=2,\dots,n_a$.
\end{proposition}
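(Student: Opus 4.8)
The plan is to reduce everything to Proposition~\ref{prop:closed-form-mle-m2+1} by exploiting the block-diagonal structure displayed just above the statement. With $Y_1 = \diag(U_{l+1},\dots,U_{l+1})$, $Y_2 = \diag(L_{l+1},\dots,L_{l+1})$, and $\Phi = \diag(\Phi_1,\dots,\Phi_{n_a})$ where $\Phi_j\in\mathit{PD}(l+1)$, the matrix $M(\Phi) := \sum_{i=1}^2 Y_i\Phi Y_i^T$ is block-diagonal with $j$-th block
\[
M_j(\Phi_j) = \begin{pmatrix}\Phi_j & 0\\ 0 & 0\end{pmatrix} + \begin{pmatrix}0 & 0\\ 0 & \Phi_j\end{pmatrix},
\]
so that $g_0(\Phi) = \sum_{j=1}^{n_a}\tilde g(\Phi_j)$ with $\tilde g(\Phi_j) := m_2\log\det M_j(\Phi_j) - m_1\log\det(\Phi_j)$. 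Since $m_1 = (l+2)n_a$ and $m_2 = (l+1)n_a$, the function $\tilde g$ equals $n_a$ times the function called $g_0$ in Proposition~\ref{prop:closed-form-mle-m2+1} with the parameter $m$ there equal to $l+1$; in particular $\tilde g$ has a critical point at $\Phi_{00} := \diag\!\big(\binom{l}{k-1} : k = 1,\dots,l+1\big)$. Moreover $\tilde g$ is g-convex (Lemma~\ref{lem:gconvex}) and scale invariant, and differentiating the identity $\tilde g(c\Phi_j) = \tilde g(\Phi_j)$ gives $\mathrm{d}\tilde g(c\Phi_j;W) = c^{-1}\mathrm{d}\tilde g(\Phi_j;W)$, so every positive multiple $c\,\Phi_{00}$, $c > 0$, is again a critical point of $\tilde g$.

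The main step is to lift criticality of the summands to criticality of $g_0$ on all of $\mathit{PD}(m_2)$, not merely along block-diagonal perturbations. Using $\mathrm{d}\log\det(\Phi) = \tr(\Phi^{-1}\mathrm{d}\Phi)$, the differential of $g_0$ at an arbitrary $\Phi\in\mathit{PD}(m_2)$ in a symmetric direction $V$ is
\[
\mathrm{d}g_0(\Phi; V) = m_2\,\tr\!\left(\Big(\textstyle\sum_{i=1}^2 Y_i^T M(\Phi)^{-1} Y_i\Big)V\right) - m_1\,\tr\!\big(\Phi^{-1}V\big),
\]
so $\Phi$ is a critical point of $g_0$ if and only if $m_2\sum_{i=1}^2 Y_i^T M(\Phi)^{-1} Y_i = m_1\Phi^{-1}$. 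I would then take $\Phi_0 = \diag(\Phi_{01},\dots,\Phi_{0,n_a})$ block-diagonal: then $M(\Phi_0)$, and hence $M(\Phi_0)^{-1}$, is block-diagonal, and because the $Y_i$ are block-diagonal the matrix $\sum_i Y_i^T M(\Phi_0)^{-1} Y_i$ is block-diagonal as well. Both sides of the critical-point equation are thus block-diagonal, and the equation decouples into the $n_a$ conditions $m_2\big(U_{l+1}^T M_j(\Phi_{0j})^{-1} U_{l+1} + L_{l+1}^T M_j(\Phi_{0j})^{-1} L_{l+1}\big) = m_1\Phi_{0j}^{-1}$ for $j = 1,\dots,n_a$, each of which is exactly the critical-point equation for $\tilde g$ on $\mathit{PD}(l+1)$. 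Hence a block-diagonal $\Phi_0$ is a critical point of $g_0$ if and only if each block $\Phi_{0j}$ is a critical point of $\tilde g$.

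Assembling the pieces: choosing $\Phi_{0j} = c_j\Phi_{00}$ with $c_j > 0$ makes each block a critical point of $\tilde g$ by the first paragraph, so $\Phi_0 = \diag(c_1\Phi_{00},\dots,c_{n_a}\Phi_{00})$ is a critical point of $g_0$; its diagonal entries in the $j$-th block are $c_j\binom{l}{k-1}$, which is the asserted form, and the normalization $\phi_{11} = 1$ forces $c_1\binom{l}{0} = c_1 = 1$. The only delicate point in the argument is the observation used in the second paragraph that criticality of $g_0$ must be tested against every symmetric direction rather than just block-diagonal ones; this is handled by the block-diagonality of the gradient matrix $\sum_i Y_i^T M(\Phi_0)^{-1} Y_i$, which is where the special Kronecker-canonical structure of $Y_1, Y_2$ enters.
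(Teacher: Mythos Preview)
Your proof is correct and follows the same route as the paper: decompose $g_0$ along the block-diagonal structure and apply Proposition~\ref{prop:closed-form-mle-m2+1} (with $m=l+1$) to each block, using scale invariance to get the free constants $c_j$. The paper's argument is in fact terser than yours---it simply says that critical points of the summands ``may then be combined to obtain critical points of $g_0$''---whereas you supply the missing justification that the gradient $m_2\sum_i Y_i^T M(\Phi_0)^{-1}Y_i - m_1\Phi_0^{-1}$ is itself block-diagonal at a block-diagonal $\Phi_0$, so vanishing blockwise already implies vanishing in every symmetric direction.
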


Note that for the critical points $\Phi_0$ from Proposition~\ref{prop:crit-point-nonunique} it holds that
\begin{align*}
 g_0(\Phi_0)
=& \sum_{j=1}^{n_a} \Bigl[ m_2 \log c_j^{l+2} d(l+1) - m_1 \log c_j^{l+1} e(l+1) \Bigr] \\
=& \sum_{j=1}^{n_a} \Bigl[ m_2 \log d(l+1) - m_1 \log e(l+1) \Bigr],
\end{align*}
which is independent of $c_j$'s.
This is a confirmation of the fact that all of the critical points define minima of $g_0$.

\section{Conclusion}
\label{sec:conclusion}

In this paper we considered uniqueness and existence of the maximum likelihood estimator in the matrix normal model.  In other words, we considered Gaussian models for i.i.d.~matrix-valued observations $Y_1,\dots,Y_n$ that posit a Kronecker product for the joint covariance matrix of the entries of each random matrix $Y_i$.  Our goal was give precise formulas for maximum likelihood thresholds, which are defined to be the sample sizes that are minimally needed for almost sure~existence of the MLE, unique existence of the MLE, or possibly mere boundedness of the likelihood function.  Our main result solves this problem for data matrices whose dimensions $m_1$ and $m_2$ differ at most by a factor of two.  Our solution exhibits subtle dependencies on $m_1$ and $m_2$.  From a statistical perspective our work clarifies that very small sample sizes are sufficient to make matrix normal models amenable to likelihood inference.  

As observed in the introduction, prior work of \cite{soloveychik:2016} can be used to determine the maximum likelihood thresholds for settings where one matrix dimension is sufficiently large compared to the other or, more precisely put, where dividing one matrix dimension by the other leaves a sufficiently small remainder.  In intermediate settings, however,  the maximum likelihood thresholds remain unknown.  Although good bounds exist, it would be of obvious interest to determine the thresholds in full generality.  Here it should be noted that our solution for the setting $2m_2\ge m_1\ge m_2$ crucially relies on invariance properties that allowed us to exploit the Kroncker canonical form for matrix pencils.  For larger sample sizes, new additional ideas are needed as a similarly simple canonical form does not exist \cite[Chap.~10]{landsberg}.  

In all cases covered by our results almost sure boundedness of the likelihood function implies almost sure existence of a maximizer.  We conjecture this to be true in general.  This said, there do exist individual data sets for which the likelihood function is bounded but does not achieve its maximum; recall Proposition~\ref{prop:square-2x2}.

Finally, we would like to note that since our paper was submitted a new preprint was posted on arXiv, which makes a connection between maximum likelihood estimation and computational invariant theory for a series of problems including the matrix normal models considered here \cite{amendola}.  A further preprint appeared later announcing that a full solution to the (unique) existence problem can be achieved via quiver representation theory \cite{makam}.

\section*{Acknowledgements}
We are grateful to Satoru Iwata, Lek-Heng Lim and Fumihiro Sato for their comments on the Kronecker canonical form.  SK was partially supported by JSPS KAKENHI Grant Number JP16H02792.

\begin{appendix}

\section{Some lemmas and proofs}

\begin{proof}[Proof of Lemma~\ref{lem:Y-full-row-rank}]
  The $m_1\times m_1$ matrix $\sum_{i=1}^n Y_iY_i^T$ is positive
  semidefinite of rank at most $n m_2<m_1$.  By spectral
  decomposition, 
  \[
    \sum_{i=1}^n Y_i Y_i^T = Q^T D Q
  \]
  where $Q=Q(Y)$ is $m_1\times m_1$ orthogonal, and $D$ is diagonal
  with $D_{m_1m_1}=0$.  Let $I_{m}$ be the $m\times m$ identity
  matrix, and let $e_m=(0,\dots,0,1)^T$ the $m$-th canonical basis
  vector of $\mathbb{R}^{m}$.
  Define
  $\Psi_1^{(t)} = Q(I_{m_1}+t\cdot e_{m_1}e_{m_1}^T )
  Q^T\in\mathit{PD}(m_1)$.  Then
  \begin{align*}
    \ell(\Psi_1^{(t)},I_{m_2}) 
    &= n m_2\log\det(I_{m_1}+t\cdot e_{m_2}e_{m_2}^T) -
    \tr[(I_{m_1}+t\cdot e_{m_2}e_{m_2}^T) D] \\
    &= n m_2\log(1+t) -
    \tr(D)
  \end{align*}
  tends to $\infty$ as $t\to\infty$.
\end{proof}

\begin{proof}[Proof of Theorem~\ref{thm:divisible}.]
We are assuming that $m_1/m_2$ is an integer.  Take $n=m_1/m_2$.
Then $Y=(Y_1,\ldots,Y_n)$ is $n\times n$,
and the profile log-likelihood function $g(\Psi)$ in (\ref{eq:profile-neglik-Y}) is easily seen to be constant.  Indeed,
\begin{align*}
 g(\Psi)
 &= m_2 \log \det(Y)^2 + m_2 \log \det(\Psi)^n - m_1 \log \det(\Psi) 
 = 2 m_2 \log|\det(Y)|.
\end{align*}
Hence, the function is maximized by any matrix $\Psi\in \mathit{PD}(m_2)$, and we obtain that $N_b(m_1,m_2)=N_e(m_1,m_2)\le m_1/m_2$ and $N_u(m_1,m_2)>m_1/m_2$.  The lower bound in Proposition \ref{prop:known-results} now implies that $N_b(m_1,m_2)=N_e(m_1,m_2)=m_1/m_2$.

When $m_1=m_2$, Corollary \ref{cor:square-Nu} gives $N_u(m_1,m_2)=3$.  When $m_1>m_2$, Proposition \ref{prop:known-results} gives the upper bound $N_u(m_1,m_2)\le m_1/m_2+1$, which implies $N_u(m_1,m_2)=m_1/m_2+1$.
\end{proof}

\begin{proof}[Proof of Corollary~\ref{cor:resolved-already}]
  Let $m_1=hm_2+r$ with quotient $h=\lfloor m_1/m_2\rfloor \ge 1$ and
  remainder $r =m_1\bmod m_2$.  Applying
  Proposition~\ref{prop:known-results},
  we have
  \[
    h+1\;\le\; N_b(m_1,m_2)\;\le\; N_e(m_1,m_2)\;\le\; N_u(m_1,m_2).
  \]
  The upper bound from Proposition~\ref{prop:known-results} implies
  that all thresholds are equal to $h+1$ if
  \[
    \frac{m_1}{m_2}+\frac{m_2}{m_1} < h+1 \;\iff\; m_1^2+m_2^2<(h+1)m_1m_2.
  \]
  Substituting $m_1=hm_2+r$ and simplifying, this condition is equivalent to 
  \[
    m_2(m_2-r)h-(m_2^2-m_2 r+r^2)\;>\;0,
  \]
  and so equivalent to the claimed inequality
  \[
    h \;>\; \frac{m_2^2-m_2 r+r^2}{m_2(m_2-r)}.
  \]
  
  The right-hand side of the inequality just given is increasing in
  the remainder $r$.  Thus, for fixed $m_2$, it never exceeds
  \[
    \frac{m_2^2-m_2 (m_2-1)+(m_2-1)^2}{m_2(m_2-m_2+1)} \;=\; m_2
    -\frac{m_2-1}{m_2} < m_2.
  \]
  Hence, $h\ge m_2$ is sufficient for all thresholds being equal to $h+1$.  
\end{proof}

\begin{proof}[Proof of Theorem~\ref{thm:main}]
Proposition \ref{prop:square-n1} gives $N_e(m_1,m_2) = N_b(m_1,m_2)$ for $m_1=m_2$.
Corollary \ref{cor:square-Nu} gives $N_u(m_1,m_2)$ for $m_1=m_2$.
Theorem \ref{thm:divisible} yields
$N_e(m_1,m_2) = N_b(m_1,m_2)$ and $N_u(m_1,m_2)$ when $2 m_2=m_1$.
When $2 m_2>m_1>m_2$, by Theorem~\ref{thm:r2}, we have that $N_u(m_1,m_2)=2$ if $m_1=m_2+1$, and $N_u(m_1,m_2)>2$ otherwise,
and that $N_b(m_1,m_2)=N_e(m_1,m_2)=2$ if $m_1-m_2\vert m_2$, and $N_u(m_1,m_2)>2$ otherwise.
On the other hand, by Proposition \ref{prop:known-results}, $N_b(m_1,m_2)\le N_e(m_1,m_2)\le N_u(m_1,m_2)\le \lfloor m_1/m_2+m_2/m_1\rfloor+1\le 3$.
\end{proof}

\begin{lemma}
\label{lem:classify}
Let $x_i^{(t)}>0$, $1\le i\le m$, $t=1,2,\ldots$, be positive sequences.
After suitable relabeling of the indices $i$ of $x_i^{(t)}$, we can take a subsequence of the form
\begin{equation}
\label{eq:form}
 \begin{pmatrix} x_1^{(t)} \\ \vdots \\ x_{r_1}^{(t)} \\ x_{r_1+1}^{(t)} \\ \vdots \\ x_{r_1+r_2}^{(t)} \\ \vdots \\ x_{m-r_K+1}^{(t)} \\ \vdots \\ x_m^{(t)} \end{pmatrix} =
 \epsilon_1^{(t)} \begin{pmatrix} y_{11}^{(t)} \\ \vdots \\ y_{1 r_1}^{(t)} \\ 0 \\ \vdots \\ 0 \\ \vdots \\ 0 \\ \vdots \\ 0 \end{pmatrix} +
 \epsilon_2^{(t)} \begin{pmatrix} 0 \\ \vdots \\ 0 \\ y_{21}^{(t)} \\ \vdots \\ y_{2 r_2}^{(t)} \\ \vdots \\ 0 \\ \vdots \\ 0 \end{pmatrix} + \cdots +
 \epsilon_K^{(t)} \begin{pmatrix} 0 \\ \vdots \\ 0 \\ 0 \\ \vdots \\ 0 \\ \vdots \\ y_{K1}^{(t)} \\ \vdots \\ y_{K r_K}^{(t)} \end{pmatrix}
\end{equation}
where $r_1+\cdots+r_K=m$, each sequence $y_{ij}^{(t)}$ converges to a limit $y^0_{ij}>0$, and $\epsilon_{i+1}^{(t)}/\epsilon_i^{(t)}\to 0$, $1\le i\le K-1$, as $t\to\infty$.
\end{lemma}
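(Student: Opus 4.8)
The plan is to separate the indices $i\in\{1,\dots,m\}$ according to the order of magnitude of the sequences $x_i^{(t)}$ by a diagonal subsequence argument, and then read off the decomposition~\eqref{eq:form}.

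First I would note that for each of the finitely many ordered pairs $(i,j)$, the ratio $x_i^{(t)}/x_j^{(t)}$ is a sequence of positive reals, hence has a subsequence converging in the compact interval $[0,\infty]$. Extracting a single subsequence that works simultaneously for all pairs (possible since there are only finitely many), I may assume every ratio $x_i^{(t)}/x_j^{(t)}$ converges in $[0,\infty]$ as $t\to\infty$.

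Next I would declare $i\sim j$ whenever $\lim_t x_i^{(t)}/x_j^{(t)}\in(0,\infty)$. This is an equivalence relation: reflexivity and symmetry are clear, and transitivity follows from the identity $x_i^{(t)}/x_k^{(t)}=(x_i^{(t)}/x_j^{(t)})(x_j^{(t)}/x_k^{(t)})$, which forces the already-existing limit of the left-hand side to equal the product of the two right-hand limits, so a finite positive value. Let $C_1,\dots,C_K$ be the resulting equivalence classes. For $i,j$ lying in distinct classes $C,C'$, the limit of $x_i^{(t)}/x_j^{(t)}$ is either $0$ or $\infty$, and the same factorization shows this does not depend on the chosen representatives; write $C\succ C'$ when the limit is $\infty$. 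The factorization identity also shows that $\succ$ is a strict total order on $\{C_1,\dots,C_K\}$ (antisymmetry because a limit cannot be both $0$ and $\infty$; transitivity and totality because $x_i^{(t)}/x_k^{(t)}=(x_i^{(t)}/x_j^{(t)})(x_j^{(t)}/x_k^{(t)})$).

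Finally I would relabel the indices so that the classes, listed in decreasing $\succ$-order, occupy consecutive blocks $\{1,\dots,r_1\},\{r_1+1,\dots,r_1+r_2\},\dots$ with $r_k=|C_k|$ and $r_1+\dots+r_K=m$; set $s_0=0$ and $s_k=r_1+\dots+r_k$. Taking $\epsilon_k^{(t)}=x_{s_{k-1}+1}^{(t)}>0$ and $y_{kj}^{(t)}=x_{s_{k-1}+j}^{(t)}/\epsilon_k^{(t)}$ for $1\le j\le r_k$, the quotients $y_{kj}^{(t)}$ converge to limits $y_{kj}^0\in(0,\infty)$ because $s_{k-1}+j$ and $s_{k-1}+1$ lie in the same class, while $\epsilon_{k+1}^{(t)}/\epsilon_k^{(t)}=x_{s_k+1}^{(t)}/x_{s_{k-1}+1}^{(t)}\to 0$ since $C_k\succ C_{k+1}$. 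Writing the relabeled vector blockwise then gives exactly the form~\eqref{eq:form}. No step presents a genuine obstacle; the only care needed is verifying that $\succ$ is well-defined and total, which is an immediate consequence of the ratio factorization.
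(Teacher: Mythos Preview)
Your argument is correct and follows essentially the same idea as the paper's proof: pass to a subsequence along which all pairwise ratios $x_i^{(t)}/x_j^{(t)}$ have limits in $[0,\infty]$, then group the indices into blocks according to whether those limits lie in $(0,\infty)$. The only difference is organizational---the paper proceeds iteratively (pick a maximal sequence, normalize by it to peel off the first block, then recurse on the remaining indices), whereas you extract all ratio limits at once and package the grouping as an equivalence relation with a total order on the classes; both routes yield the same decomposition.
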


\begin{proof}
When comparing two sequences $x_i^{(t)}$ and $x_j^{(t)}$, at least one of the two statements
``$x_i^{(t)}\ge x_j^{(t)}$ for infinitely many $t$'' or
``$x_i^{(t)}\le x_j^{(t)}$ for infinitely many $t$'' holds.
By relabeling the indices, we can assume that $x_1^{(t)}\ge x_i^{(t)}$ ($1<i\le m$) for infinitely many $t$.
Take subsequences so that $x_i^{(t)}/x_1^{(t)}\in(0,1]$ have finite limits for all $i$.
Suppose that the limits for $i=2,\ldots,r_1$ are positive, and the others are zero. 
Then we set $\epsilon_1^{(t)}=x_1^{(t)}$, $y_{1i}^{(t)}=x_i^{(t)}/x_1^{(t)}$, $i=1,\ldots,r_1$.

Next, we apply the same procedure to $x_{r_1+1}^{(t)},\ldots,x_m^{(t)}$.
Suppose that $x_{r_1+1}^{(t)}\ge x_i^{(t)}$ ($r_1+1<i\le m$) for infinitely many $t$.
Take subsequences so that $x_i^{(t)}/x_{r_1+1}^{(t)}\in(0,1]$ have finite limits for all $r_1+1<i\le m$ again, and classify the limits by their signs (i.e., positive or zero).
Suppose that the limits for $i=r_1+2,\ldots,r_1+r_2$ are positive, and the others are zero.
Then we take $\epsilon_2^{(t)}=x_{r_1+1}^{(t)}$ and $y_{2i}^{(t)}=x_{r_1+i}^{(t)}/x_{r_1+1}^{(t)}$, $i=r_1+1,\ldots,r_1+r_2$.
Note that $\epsilon_2^{(t)}/\epsilon_1^{(t)}=x_{r_1+1}^{(t)}/x_1^{(t)}\to 0$.

By repeating this procedure, we get the form (\ref{eq:form}).
\end{proof}

\begin{lemma}
  \label{lem:ev-prob}
  Let $Y_1$ and $Y_2$ be two independent random $2\times 2$ matrices
  whose entries are i.i.d.~$\mathcal N(0,1)$.  Then the matrix $Y_1^{-1}Y_2$
  has real eigenvalues with probability $\pi/4\approx 0.7854$.
\end{lemma}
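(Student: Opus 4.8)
The plan is to recast the question geometrically: $Y_1^{-1}Y_2$ has real eigenvalues precisely when a certain $2$-plane in $\mathbb R^4$ meets the cone of singular $2\times 2$ matrices in a nondegenerate way, and that $2$-plane turns out to be uniformly distributed on the Grassmannian, so the probability can be computed there.

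Concretely, since $Y_1$ is invertible, $W=Y_1^{-1}Y_2$ has a real eigenvalue if and only if there is a vector $v\neq 0$ with $Y_1v$ and $Y_2v$ linearly dependent, i.e.\ with the quadratic form $q(v):=\det(Y_1v\mid Y_2v)$ vanishing at $v$. A binary quadratic form has a nontrivial zero iff it is indefinite or degenerate, and degeneracy occurs only on a Lebesgue-null set; so almost surely $W$ has real eigenvalues iff $q$ is indefinite. Identifying $\mathbb R^4$ with the space of $2\times 2$ matrices $P=(p_{ij})$ carrying the determinant form $Q(P)=\det P$ (of signature $(2,2)$), a short computation identifies $q$ with the pullback of $Q$ along $v\mapsto\bigl(\begin{smallmatrix}Y_1\\ Y_2\end{smallmatrix}\bigr)v$; that is, $q=Q|_\Pi$ where $\Pi\subset\mathbb R^4$ is the column span of the $4\times 2$ matrix $\bigl(\begin{smallmatrix}Y_1\\ Y_2\end{smallmatrix}\bigr)$. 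Since this matrix has i.i.d.\ standard normal entries, invariance of the Gaussian law under left multiplication by $O(4)$ shows that $\Pi$ follows the $O(4)$-invariant (uniform) law on the Grassmannian $\mathrm{Gr}(2,4)$. Hence the target probability equals the probability that $Q$ restricts to an indefinite form on a uniformly random $2$-plane.

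I would then compute this in Pl\"ucker coordinates. Taking a spanning pair $u,w$ of $\Pi$ and writing $p_{ij}=u_iw_j-u_jw_i$, expansion of the binary form $Q(su+tw)$ shows its discriminant equals $(p_{14}-p_{23})^2-4p_{12}p_{34}$ up to a positive factor, so $Q|_\Pi$ is indefinite iff $(p_{14}-p_{23})^2>4p_{12}p_{34}$. Substituting the Pl\"ucker relation $p_{12}p_{34}=p_{13}p_{24}-p_{14}p_{23}$ converts this into the symmetric inequality $(p_{14}+p_{23})^2>4p_{13}p_{24}$. In the self-dual/anti-self-dual coordinates $\omega^\pm_1=p_{12}\pm p_{34}$, $\omega^\pm_2=p_{13}\mp p_{24}$, $\omega^\pm_3=p_{14}\pm p_{23}$, the Pl\"ucker relation reads $|\omega^+|^2=|\omega^-|^2$, and the inequality becomes $(\omega_2^+)^2+(\omega_3^+)^2>(\omega_2^-)^2$, i.e.\ $(\omega_1^+)^2+(\omega_2^-)^2<|\omega^+|^2$ after using $|\omega^+|^2=|\omega^-|^2$. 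Finally, the map $\Pi\mapsto(\omega^+/|\omega^+|,\ \omega^-/|\omega^-|)$ identifies the oriented Grassmannian with $S^2\times S^2$ and carries the invariant measure to the product of the uniform measures; the event is orientation-independent, so its probability can be read off here. The two coordinates $\omega_1^+/|\omega^+|$ and $\omega_2^-/|\omega^-|$ are then independent, and by Archimedes' hat-box theorem each is uniform on $[-1,1]$, so the probability of $(\omega_1^+/|\omega^+|)^2+(\omega_2^-/|\omega^-|)^2<1$ equals the area of the unit disk divided by the area of $[-1,1]^2$, namely $\pi/4$.

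The step I expect to require the most care is the bookkeeping in the middle paragraph: correctly expanding the discriminant of $Q|_\Pi$ in Pl\"ucker coordinates and then invoking the Pl\"ucker relation to arrive at the symmetric form $(p_{14}+p_{23})^2>4p_{13}p_{24}$; once this identity is in place, the spherical computation is immediate. A less slick alternative would condition on $Y_1$: the columns of $W$ are then i.i.d.\ $\mathcal N\bigl(0,(Y_1^{\mathsf T}Y_1)^{-1}\bigr)$, and one could compute the real-eigenvalue probability of such a matrix and integrate it against the Wishart law of $Y_1^{\mathsf T}Y_1$; this route, however, leads to messier integrals than the one above.
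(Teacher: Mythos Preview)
Your argument is correct and takes a genuinely different route from the paper's. The paper proceeds analytically: it first derives the density of $Z=Y_1^{-1}Y_2$ on $\mathbb R^{m\times m}$ by changing variables $Y_2=Y_1Z$, integrating out $Y_1$ via a QR decomposition and a Wishart integral, and obtaining a law proportional to $|I+ZZ^T|^{-m}$. For $m=2$ it then parametrizes the real-eigenvalue locus by the eigendecomposition $Z=PLP^{-1}$ with unit eigenvectors $(\cos\theta_i,\sin\theta_i)$, computes the Jacobian, and integrates over $-\infty<l_2<l_1<\infty$ and $\theta_1,\theta_2$ to obtain $\pi/4$. Your approach, by contrast, sidesteps the matrix-variate density entirely: you recast the event as ``the signature-$(2,2)$ determinant form restricts indefinitely to a uniformly random $2$-plane in $\mathbb R^4$,'' then evaluate this on $\widetilde{\mathrm{Gr}}(2,4)\cong S^2\times S^2$ via Pl\"ucker/self-dual coordinates and Archimedes' theorem, so that $\pi/4$ appears as an area ratio rather than as the value of an iterated integral. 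The paper's method has the advantage of producing the explicit density of $Y_1^{-1}Y_2$ (and works verbatim for general $m$ up to the final integration), while yours is cleaner and coordinate-free for the probability itself but leans on the Grassmannian geometry; your closing remark about conditioning on $Y_1$ and integrating against a Wishart law is in fact closer in spirit to what the paper actually does.
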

\begin{proof}
  We first obtain the distribution of $Z=Y_1^{-1}Y_2$, which we do for
  general matrix size $m$.  We use $|A|$ as a shorthand for the
  determinant of a matrix $A$.  Then noting that $Y_2=Y_1 Z$ and
  $dY_2=|Y_1|^m dZ$, the joint density of $(Y_1,Y_2)$ is
\begin{align*}
& \tfrac{1}{(2\pi)^{m^2}}\exp\left\{-\tfrac{1}{2}\tr\left(Y_1 Y_1^T+Y_2 Y_2^T\right)\right\} dY_1 dY_2 \\
&= \tfrac{1}{(2\pi)^{m^2}}\exp\left\{-\tfrac{1}{2}\tr\left(Y_1 Y_1^T+Y_1 Z Z^T Y_1^T\right)\right\} dY_1 |Y_1|^m dZ \\
&= \tfrac{1}{(2\pi)^{m^2}}\exp\left\{-\tfrac{1}{2}\tr\left( Y_1^T Y_1 (I + Z Z^T)\right)\right\} |Y_1|^m dY_1 dZ.
\end{align*}
Let $Y_1 = H T$ be the QR decomposition.  That is, $H\in O(m)$ and
$T=(t_{ij})$ is an upper-triangular matrix with $t_{ii}>0$.  Let
$S=Y_1^TY_1=T^TT$.  Then the mapping $Y_1\mapsto (S,H)$ is one to one,
and according to Theorem 2.1.14 in \cite[p.~66]{muirhead}, its
Jacobian is
\[
 dY_1 = 2^{-m} |S|^{-1/2} dS \, (dH), \quad (dH)=\bigwedge_{1\le i<j\le m} h_j^T dh_i,
\]
where $h_i$ is the $i$th column of $H$.  By Theorems 2.1.12 and 2.1.15
in \cite{muirhead},
\[
 \int_{O(m)} (dH) = \frac{2^m\pi^{m^2/2}}{\Gamma_m(\frac{m}{2})},
 \qquad
 \Gamma_m(a) = \pi^{m(m-1)/4}\prod_{i=1}^m \Gamma\left(\frac{2a-i+1}{2}\right),
\]
and we have the joint density of $(Z,S)$ as
\[
 \frac{\pi^{m^2/2}}{(2\pi)^{m^2}\Gamma_m(\frac{m}{2})}
 \exp\left\{-\tfrac{1}{2}\tr S (I + Z Z^T)\right\} |S|^{(m-1)/2} dZ dS.
\]
Moreover, by letting $n=2m$ in the Wishart integral
\[
 \int \exp\left\{-\tfrac{1}{2}\tr S \Sigma^{-1}\right\} |S|^{(n-m-1)/2} dS
= 2^{mn/2} \Gamma_m\left(\frac{n}{2}\right)|\Sigma|^{n/2},
\]
we have
\[
 \int \exp\left\{-\tfrac{1}{2}\tr S \Sigma^{-1}\right\} |S|^{(m-1)/2} dS
= 2^{m^2} \Gamma_m(m)|\Sigma|^{m}.
\]
Hence, the marginal of $Z$ is
\[
 \frac{\pi^{m^2/2}2^{m^2} \Gamma_m(m)}{(2\pi)^{m^2}\Gamma_m(\frac{m}{2})} |I+Z Z^T|^{-m} dZ
= \frac{\prod_{i=1}^m \Gamma(\frac{2m-i+1}{2})}{\pi^{m^2/2} \prod_{i=1}^m \Gamma(\frac{m-i+1}{2})} |I+Z Z^T|^{-m} dZ.
\]

We now restrict our attentions to the case $m=2$.
The density of $Z$ is then
\[
 \frac{1}{\pi^2} \frac{\Gamma(\frac{4}{2})\Gamma(\frac{3}{2})}{\Gamma(\frac{2}{2})\Gamma(\frac{1}{2})} |I+Z Z^T|^{-2} dZ
= \frac{1}{2\pi^2} |I+Z Z^T|^{-2} dZ.
\]
Suppose that $Z$ has real eigenvalues.  For such $Z$, we have the
decomposition $Z=P L P^{-1}$, where $L=\diag(l_1,l_2)$, $l_1>l_2$, and
$P=(p_{ij})_{2\times 2}$ is a non-singular matrix.  Without loss of
generality, we may assume that the eigenvectors
$\begin{pmatrix} p_{1i} \\ p_{2i} \end{pmatrix}$ are unit vectors with
$p_{1i}>0$.  Then the map $Z\mapsto (L,P)$ is one to one.  However, it
will be convenient to also allow $p_{i1}< 0$ in the below
calculations.  In this parameterization, $Z\mapsto (L,P)$ is 1
to $2^2$.

Write
\[
 \begin{pmatrix} p_{1i} \\ p_{2i} \end{pmatrix} = \begin{pmatrix} \cos\theta_i \\ \sin\theta_i \end{pmatrix}, \ \ i=1,2.
\]
The Jacobian of $Z\mapsto (L,P)$ is
\[
 dZ = \frac{(l_1-l_2)^2}{\sin(\theta_1-\theta_2)^2} dL d\theta_1 d\theta_2.
\]
Integrating we find that
\begin{multline*}
 \frac{1}{2\pi^2} |I+Z Z^T|^{-2} dZ = \\
 \frac{1}{2\pi^2}
 \frac{\sin(\theta_1-\theta_2)^2}{\{(1+l_1^2)(1+l_2^2)-(1+l_1 l_2)^2\cos(\theta_1-\theta_2)^2\}^2}\frac{(l_1-l_2)^2}{\sin(\theta_1-\theta_2)^2} dL d\theta_1 d\theta_2
\end{multline*}
over $\theta_1,\theta_2\in[0,2\pi)$.  Dividing by $2^2$, we obtain the
density (not probability density) of $(l_1,l_2)$ as
\[
 \frac{1}{4}\frac{l_1-l_2}{(1+l_1^2)^{3/2}(1+l_2^2)^{3/2}} dL.
\]
Taking the integral over $-\infty<l_2<l_1<\infty$,
\[
 \int_{-\infty<l_2<l_1<\infty}\frac{1}{4}\frac{l_1-l_2}{(1+l_1^2)^{3/2}(1+l_2^2)^{3/2}} dl_1 dl_2 = \frac{\pi}{4}.
\]
This is the integral over the space where $Z$ has real eigenvalues,
and our proof is complete.
\end{proof}

\end{appendix}

\bibliographystyle{amsalpha}
\bibliography{mathias+}

\end{document}